\DeclareMathOperator{\divv}{div}
\newcommand{\pair}[1]{\left\langle #1 \right\rangle}
\providecommand{\norm}[1]{\lVert#1\rVert}
\providecommand{\abs}[1]{\lvert#1\rvert}
\newcommand{\ud}{\mathrm{d}}
\newcommand{\RR}{{\mathbb R}}
\newcommand{\vol}{\mu}
\newcommand{\Diff}{\mathrm{Diff}}
\newcommand{\Xcal}{\mathfrak{X}}
\newcommand{\Diffvol}{{\Diff_{\vol}}}
\newcommand{\Xcalvol}{{\Xcal_{\vol}}}
\newcommand{\Dens}{\mathrm{Dens}}
\newcommand*\id{\mathrm{id}}
\def\ph{\varphi}
\let\on=\operatorname
\title[Semi-invariant Riemannian metrics in hydrodynamics]{Semi-invariant Riemannian metrics in hydrodynamics}
\author[Bauer]{Martin Bauer}
\address[M.\ Bauer]{Department of Mathematics, Florida State University}
\email{bauer@math.fsu.edu}
\author[Modin]{Klas Modin}
\address[K.\ Modin]{Department of Mathematical Sciences, Chalmers University of Technology and the University of Gothenburg}
\email{klas.modin@chalmers.se}
\date{\today}                                           %
\keywords{Infinite-dimensional geometry, diffeomorphism groups, geometric hydrodynamics, Euler equations, shallow water equations, Sobolev metrics, Otto calculus, existence of geodesics}
\subjclass[2010]{58B10, 35Q31}
\begin{document}

\begin{abstract}
	Many models in mathematical physics are given as non-linear partial differential equation of hydrodynamic type; the incompressible Euler, KdV, and Camassa--Holm equations are well-studied examples.
	A beautiful approach to well-posedness is to go from the Eulerian to a Lagrangian description.
	Geometrically it corresponds to a geodesic initial value problem on the infinite-dimensional group of diffeomorphisms with a \emph{right invariant} Riemannian metric.
	By establishing regularity properties of the Riemannian spray one can then obtain local, and sometimes global, existence and uniqueness results.
	There are, however, many hydrodynamic-type equations, notably shallow water models and compressible Euler equations, where the underlying infinite-dimensional Riemannian structure is not fully right invariant, but still \emph{semi-invariant} with respect to the subgroup of volume preserving diffeomorphisms.
	Here we study such metrics. 
	For semi-invariant metrics of Sobolev $H^k$-type we give local and some global well-posedness results for the geodesic initial value problem.
	We also give results in the presence of a potential functional (corresponding to the fluid's internal energy).
	Our study reveals many pitfalls in going from fully right invariant to semi-invariant Sobolev metrics; the regularity requirements, for example, are higher.
	Nevertheless the key results, such as no loss or gain in regularity along geodesics, can be adopted.
\end{abstract}

\maketitle

\tableofcontents

\section{Introduction}

In 1966 \citet{Ar1966} discovered that the Euler equations of an incompressible perfect fluid can be interpreted as a geodesic equation on the space of volume-preserving diffeomorphisms.
Based on this \citet{EbMa1970} initiated a new approach to local (short time) existence and uniqueness of hydrodynamic PDE.
This approach has since then been extended to many other PDE of mathematical physics, such as the KdV \cite{ovsienko1987super}, Camassa--Holm \cite{camassa1993integrable,kouranbaeva1999camassa}, Hunter--Saxton \cite{HuSa1991,lenells2008hunter}, 
Constantin--Lax--Majda \cite{constantin1985simple,escher2012geometry,bauer2016geometric} and Landau--Lifschitz equations~\cite{ArKh1998}.
The same analysis sometimes lends itself to global existence results \cite{BrVi2014,MiMu2013,MP2010,EK2014a,BEK2015}.

The common setting is a group of diffeomorphisms, thought of as an infinite-dimensional manifold, equipped with a right invariant Riemannian metric.
The pressing issue is to obtain this setting rigorously in the category of Banach manifolds (by Sobolev completion of the diffeomorphism group), and then prove that the PDE becomes an ODE on the Banach manifold with a smooth (or at least Lipschitz continuous) infinite-dimensional vector field.
After that, local existence, uniqueness, and smooth dependence on initial conditions follows from standard results for ODE on Banach manifolds (the Picard--Lindel\"off theorem).
Global results are aquired if the Riemannian structure is strong in the Sobolev topology.

The inner workings of the Ebin and Marsden analysis heavily utilize right invariance of the Riemannian structure---it is through this structure that the PDE (with loss of derivatives) can be formulated as an ODE (without loss of derivatives).
What happens if the Riemannian structure is only semi-invariant?

Here we address this question for the group of diffeomorphisms $\Diff(M)$ (where $M$ is a closed manifold) equipped with a Riemannian metric that is right invariant only with respect to the sub-group of volume-preserving diffeomorphisms $\Diffvol(M)$.
We call such metrics \emph{semi-invariant}.
Our study connects to several lines of research.

\textbf{Optimal transport:} In Otto's~\cite{Ot2001} geometric approach to optimal mass transportation the $L^2$-Wasserstein distance between two probability densities is (formally) obtained as a Riemannian boundary value problem.
The underlying Riemannian structure on the space $P^\infty(M)$ of smooth probability densities stems from a semi-invariant (but not fully invariant) $L^2$-type Riemannian metric on $\Diff(M)$.
Indeed, through Moser's~\cite{Mo1965} result $$P^\infty(M)\simeq \Diff(M)/\Diffvol(M)$$ the $L^2$-type Riemannian structure on $\Diff(M)$ induces, due to the $\Diffvol(M)$-invariance, a Riemannian structure on $P^\infty(M)$.
This structure has low regularity: only Sobolev~$H^{-1}$.
The search for higher order Otto metrics is advocated in the optimal transport community, see \citet[Ch.~15]{Vi2009} and \citet{Sc2018}.
Furthermore, in applications higher regularity than $H^{-1}$ is often desired, for example in imaging to sustain sharp corners.
Higher order Sobolev type Riemannian metrics on $P^\infty(M)$ induced by fully right invariant metrics on $\Diff(M)$ were studied in \cite{BaJoMo2017}.
However, the full invariance imposes restrictions that, for example, excludes the Otto's metric. %
This motivates the study of semi-invariant Riemannian metrics on $\Diff(M)$ of higher regularity than $L^2$.

\textbf{Information geometry:} In the field of information geometry (cf.~\citet{AmNa2000}) the principal Riemannian structure on $P^\infty(M)$ is the Fisher--Rao metric
which induces the (spherical) Hellinger distance. 
The Fisher--Rao metric is canonical in the sense that it is the only Riemannian metric on the space of smooth densities that is invariant under the action of $\Diff(M)$, c.f.\ \cite{cencov2000statistical,AJLV2015,BaBrMi2015}.
It has a similar geometric interpretation as the semi-invariant $L^2$ metric on $\Diff(M)$, only its regularity is higher: $H^1$ instead of $L^2$ \cite{KhLeMiPr2013,Mo2014}.
Thus, our study also contributes towards new Riemannian structures in information geometry.
Furthermore, we can treat the Wasserstein--Otto metric and the Fisher--Rao metric in the same geometric transport framework, allowing mixed order models such as proposed in \cite{chizat2018interpolating,piccoli2014generalized,piccoli2016properties}.%

\textbf{Shallow water equations:} In the field of shallow water equations, the full Euler equations are approximated in the regime where the wave-length is large in comparison to the depth.
The standard shallow water equations for waves evolving on a Riemannian manifold $M$ are
\begin{equation}\label{eq:shallow_water}
\left\{
\begin{aligned}
	& u_t + \nabla_u u + \nabla h = 0 \\
	& h_t + \divv(h u) = 0
\end{aligned}
\right.
\end{equation}
where $u$ is a vector field on $M$ describing the horizontal velocity at the surface and $h(x)$ is the water depth at $x\in M$.
By an analogue to Arnold's interpretation of the incompressible Euler equations, the shallow water equations~\eqref{eq:shallow_water} constitute Newton's equations on $\Diff(M)$, with kinetic energy given by the aforementioned semi-invariant $L^2$ metric and potential energy given by $V(h) = \frac{1}{2}\norm{h}^2_{L^2}$ (see \cite{KhMiMo2018} for details).
A prevailing theme in shallow water research is to modify the equations~\eqref{eq:shallow_water} to obtain more accurate models, for example the Serre--Green--Naghdi (SGN) equations
\begin{equation}\label{eq:sgn}
\left\{
\begin{aligned}
	& u_t + \nabla_u u + \nabla h  = -\frac{1}{3h}\Big(\frac{\partial}{\partial t}+\nabla_u\Big)\nabla^* (h^3 \nabla u) \\
	& h_t + \divv(h u) = 0 ,
\end{aligned}
\right.
\end{equation}
where $\nabla^*$ denotes the $L^2$-adjoint of the covariant derivative.
Following the work of \citet{Io2012}, these equations correspond to Newton's equations on $\Diff(M)$, with the same potential energy as for the classical shallow water equations, but with the modified $H^1$-type kinetic energy
\begin{equation}\label{eq:kinetic_sgn}
	\frac{1}{2}\int_M (h \abs{u}^2 + \frac{h^3}{3}\abs{\nabla u}^2)\vol .
\end{equation}
Since this kinetic energy is quadratic in the vector field $u$, and since $h$ is transported by $u$ as a volume density, it follows that \eqref{eq:kinetic_sgn} corresponds to a semi-invariant $H^1$-type Riemannian metric of $\Diff(M)$.
From a geometric viewpoint, new shallow water models are thus obtained by higher order semi-invariant modifications of the standard $L^2$-type metric on $\Diff(M)$.
This further motivates our study.
One might of course also modify the potential energy as suggested in \cite{ClDuMi2017}.

\textbf{PDE analysis:} From a more mathematical point-of-view, to investigate the degree to which Ebin and Marsden techniques can be extended to the semi-invariant case yields new theoretical insights.
As we shall see, the extension is non-trivial, with some unexpected pitfalls. %
The results of Ebin and Marsden are based on extending Arnold's Riemannian metric on $\Diff_\vol(M)$ to a Sobolev completion $\mathcal D_\vol^s(M)$ of the diffeomorphisms.
If $s>\operatorname{dim}(M)/2 + 1$ then $\mathcal D_\vol^s(M)$ is a Banach manifold.
Remarkably, the associated (infinite-dimensional) Riemannian spray on $\mathcal D_\vol^s(M)$ is then smooth, so local well-posedness follows from standard ODE-theory on Banach manifolds (see e.g.\ \cite{Lan1999}).
The Ebin and Marsden approach has successfully been extended to the Sobolev completion $\mathcal D^s(M)$ of all diffeomorphisms for right invariant Sobolev $H^k$-metric for $k\geq 1$ \cite{MP2010,kolev2017local}.
Furthermore, for strong Riemannian metrics, i.e., where $k=s$, the right invariance of the metric yields global well-posedness \cite{GAYBALMAZ2015}.
In contrast, our study shows that not every smooth semi-invariant $H^1$-metric yields a smooth spray (the SGN metric \eqref{eq:kinetic_sgn} is an example)
and global results are not readily available without additional assumptions.
Even more, for local results the Sobolev completion $\mathcal D^s(M)$ of $\Diff(M)$ requires a higher Sobolev index $s$ than in the fully right invariant case.
Nevertheless, with modifications the key components of the Ebin and Marsden technique \emph{can} be adopted to the semi-invariant case, for example the no-loss-no-gain result (see \autoref{app:noloss} below).

\subsection{Main results}
Let $(M,g)$ be a closed (compact and without boundary) oriented Riemannian manifold of finite dimension~$d$.
Associated with the metric $g$ is the Riemannian volume form $\vol$, and the Levi--Civita covariant derivative $\nabla$ (acting on tensor fields). 
We denote time derivatives by subscript $t$, for example $u_t = \partial u/\partial t$.

The Riemannian metric $g$ can be extended to arbitrary $r$-$q$-tensors via
\begin{equation}
g^q_r = \bigotimes^{r} g  \otimes \bigotimes^{q} g^{-1}\;.
\end{equation}
To simplify the notation we write $g$ also for this extended metric.
For tensor fields $X,Y\in C^{\infty}(M,\mathcal T^q_r(M))$ we use vector calculus notation $X\cdot Y \coloneqq g(X,Y)$ and $\abs{X} \coloneqq \sqrt{g(X,X)}$.
The $L^2$ inner product on tensor fields is given by
\begin{equation}
	\pair{X,Y}_{L^2} = \int_M X\cdot Y \,\vol .
\end{equation}

The space of smooth vector fields on $M$ is denoted $\Xcal(M)$.
Furthermore, the space of smooth probability densities is given by
\begin{equation}
	P^\infty(M) = \{ \rho\in C^\infty(M)\mid \rho >0, \int_M \rho \vol = 1 \}.
\end{equation}

Consider the following family of Lagrangian functionals on the hydrodynamic phase space $\Dens(M)\times \Xcal(M)$
\begin{equation}
	L(u,\rho) = \frac{1}{2}\sum_{i=0}^k \int_M a_i\circ\rho \, \abs{\nabla^i u}^2 \, \vol - V(\rho)
\end{equation}
where $V\colon P^\infty(M)\to \RR$ is a potential functional, and $a_i\colon\RR_{>0}\to \RR_{\geq 0}$ are smooth coefficient functions that specify how the kinetic energy depends on the density variable~$\rho$.
Since the kinetic energy is quadratic on the vector field $u$, the variational derivative of $L$ (with respect to the $L^2$ inner product) is a family of differential operators $A\colon P^\infty(M)\times\Xcal(M)\to\Xcal(M)$ that is linear in the vector field component $u$
\begin{equation}
	\frac{\delta L}{\delta u} \eqqcolon A(\rho)u= \sum_{i=0}^k (\nabla^i)^*\,a_i\circ\rho\, \nabla^i u\,.
\end{equation}
The other variational derivative, with respect to $\rho$, is
\begin{equation}
	\frac{\delta L}{\delta \rho} \eqqcolon B(\rho,u) = \frac{1}{2}\sum_{i=0}^k a_i'\circ\rho \, \abs{\nabla^i u}^2 - \frac{\delta V}{\delta \rho} \, .
\end{equation}
From these derivatives we obtain a corresponding family of hydrodynamic-type PDE in the time-dependent vector fields $u$ and $m$ and density $\rho$
\begin{equation}\label{eq:main_eq}
\left\{
\begin{aligned}
	&m_t + \nabla_u m+(\mathrm{div}\hspace{0,1cm}u)m+(\nabla u)^\top m -\rho \nabla B(\rho,u) = 0  \\
	&\rho_t+\divv(\rho u)  = 0 \\
	& m = A(\rho)u  \\
	& u|_{t=0} = u_0, \quad \rho|_{t=0} = \rho_0 .
\end{aligned}
\right.
\end{equation}
From the point of view of analytical mechanics, the variable $m$ is the momentum associated with the fluid velocity $u$, and $A(\rho)$ is the inertia operator.

Before formulating the main result we list some special cases of the equations~\eqref{eq:main_eq}.

\begin{example}	
	If $V=0$ and the coefficient functions $a_i$ are constants, so that $B(\rho,u)\equiv 0$, we obtain the EPDiff equation~\cite{holm2005momentum}.
	This corresponds to a fully right invariant Riemannian structure on $\Diff(M)$. 
	For $M=S^1$ we obtain as special cases the Camassa--Holm \cite{kouranbaeva1999camassa},
the Constantin--Lax--Majda~\cite{escher2012geometry} and the Hunter--Saxton equation \cite{lenells2008hunter}.

\end{example}
\begin{example} 
	If $V=0$ and $k=0$ with $a_0(r) = r$ we obtain Burgers' equation
	\begin{equation}
		u_t + \nabla_u u = 0 ,
	\end{equation}
	which is the simplest hydrodynamic model (fluid particles are moving along geodesics on $M$ without interacting with each other).
	This corresponds to the semi-invariant $L^2$-metric whose distance is the classical $L^2$-Wasserstein distance (also called `earth-movers distance').
\end{example}
\begin{example}
	If the potential functional is $$V(\rho)= \int_M e(\rho)\rho \,\vol$$ for some internal energy function $e(\rho)$, and $k=0$ with $a_0(r) = r$, we obtain the (barotropic) compressible Euler equations
	\begin{equation}
		\left\{
		\begin{aligned}
			& u_t + \nabla_u u  + \frac{1}{\rho}\nabla e'(\rho)\rho^2 = 0 ,\\
			& \rho_t + \divv(\rho u) = 0 .
		\end{aligned}
		\right.
	\end{equation}
	The choice $e(\rho) = \rho/2$ coincides with the classical shallow water equations \eqref{eq:shallow_water}, where the density $\rho$ then is the water depth (denoted $h$ in \eqref{eq:shallow_water}).
\end{example}
\begin{example} 
	If the potential functional is $$V(\rho) = \frac{1}{2}\int_M \rho^2\,\vol ,$$ and $k=1$ with $a_0(r) = r$ and $a_1(r) = r^3/3$, we obtain the SGN equations~\eqref{eq:sgn}, again with $\rho$ as the depth function $h$.
\end{example}

\begin{maintheorem*}
	Consider the equations \eqref{eq:main_eq} with $\rho_0\in P^\infty(M)$ and $u_0\in\Xcal(M)$, and a potential functional $V$ such that
	\begin{equation}
		\frac{\delta V}{\delta \rho}\colon P^\infty(M)\to C^\infty(M)
	\end{equation}
	is a smooth (nonlinear) differential operator of order $2k-2$ or less.
	\begin{itemize}
		\item If $k=1$ with $a_0(\cdot)>0$ and $a_1(\cdot) = \text{const} > 0$, or
		\item if $k>2$ with $a_0(\cdot)>0$ and $a_k(\cdot)>0$,  
	\end{itemize}
	then there exists a unique solution defined on a maximal time-interval of existence $J\subset \RR$, which is open and contains zero.
	The solution $u=u(t,x)$ and $\rho=\rho(t,x)$ depends smoothly (in the Fréchet topology of smooth functions) on the initial conditions. 

	Furthermore, if $k>d/2+1$ and $V=0$, and if $a_0(\cdot) > C_1$ and $a_k(\cdot) > C_2$ for constants $C_1,C_2>0$, then $J=\RR$, i.e., we have global existence.
\end{maintheorem*}
In the remainder of the paper we prove this and other related results in the more general setting when $A(\rho)$ is an elliptic differential operator fulfilling certain assumptions.

\medskip
\smallskip
\noindent\textbf{Acknowledgements.}
We are grateful to Dimitrios Mitsotakis who pointed us to the geometric interpretation of the Serre--Green--Naghdi equation.
We would also like to thank Sarang Joshi and François-Xavier Vialard for helpful discussions.
The second author was supported by EU Horizon 2020 grant No~691070, by the Swedish Foundation for International Cooperation in Research and Higher Eduction (STINT) grant No~PT2014-5823, and by the Swedish Research Council (VR) grant No~2017-05040.

\section{Semi-invariant Riemannian metrics on diffeomorphisms}\label{Sec:SDiffinvarianmetrics}

\subsection{Background on diffeomorphism groups}\label{sec:diffgroups}

The group $\Diff(M)$ of all smooth diffeomorphisms is an infinite-dimensional Fréchet Lie group, i.e., it is a Fréchet manifold and the group operations (composition and inversion) are smooth maps \cite[\S\!~I.4.6]{Ha1982}.
The corresponding Fréchet Lie algebra is the space $\Xcal(M)$ of smooth vector fields equipped with minus the vector field bracket.

To obtain results on existence of geodesics on $\Diff(M)$, the standard approach is to work in the Banach topology of Sobolev completions, and then use a `no-loss-no-gain' in regularity result by \citet{EbMa1970}.
 Therefore we introduce 
\begin{align}
\mathcal{D}^s(M)=\left\{ \varphi\in H^s(M,M)\mid \varphi \text{ is bijective and } \varphi^{-1}\in H^s(M,M)\right\},\quad s>\frac{d}{2}+1\;,
\end{align}
which is a Hilbert manifold and a topological group.
It is, however, not a Lie group, since left multiplication is not smooth (only continuous).
The corresponding set of Sobolev vector fields is denoted $\Xcal^s(M)$. 
For a detailed treatment we refer to the research monograph by \citet*{IKT2013}.

\subsection{Geodesic equation}
In the following let $G$ be a Riemannian metric on $\Diff(M)$ that is invariant with respect to the right action of the volume preserving diffeomorphism group 
$\operatorname{Diff}_\vol(M)$, but not necessary with respect to arbitrary diffeomorphisms in $\Diff(M)$, i.e.,
\begin{equation}\label{eq:sdiff_inv_metric}
G_{\varphi}(h,k)=G_{\varphi\circ\psi}(h\circ\psi,k\circ\psi) 
\end{equation}
for all $\varphi\in \Diff(M), h,k\in T_{\varphi}\Diff(M)$ and $\psi\in \operatorname{Diff}_\vol(M)$.
We refer to such a metric as \emph{semi-invariant}.

Let $\rho=\det(D\varphi^{-1})$.
Then any Riemannian metric of the form 
\begin{equation}\label{eq:right_inv_metricG}
G_{\varphi}(u\circ\varphi,v\circ\varphi) %
=\int_M u\cdot  A(\rho)v\, 
\vol\qquad \forall u,v \in \Xcal(M)
\end{equation}
is semi-invariant, i.e., satisfies the invariance property~\eqref{eq:sdiff_inv_metric}.
Here, the \emph{inertia operator}
$$A(\rho)\colon \Xcal(M)\to \Xcal(M)$$ is a field of operators that are self-adjoint (with respect to the $L^2$ inner product) and positive.

\begin{assumption}\label{assump:inertia}
	The inertia operator $A(\rho)$ fulfills these conditions:
	\begin{enumerate}
		\item For a fixed integer $k\geq 1$ %
		the map
		\begin{equation}
			(\rho,u)\mapsto A(\rho)u
		\end{equation}
		is a smooth differential operator 
		$P^{\infty}(M)\times \Xcal(M)\to \Xcal(M)$ of order $2k-2$ in its first argument and of order $2k$ in its second argument.
		\item For any $\rho\in P^{\infty}(M)$ the map
		\begin{equation}
			u\mapsto A(\rho)u
		\end{equation}
		is a linear positive elliptic differential operator which is self-adjoint with respect to the $L^2$ inner product.
		\item Let $v\mapsto A'(\rho)^*(u,v)$ be the $L^2$ adjoint of the $\rho$-derivative $\dot\rho\mapsto A'(\rho)(u,\dot\rho)$, i.e.,
		\begin{equation}\label{eq:weakAprime}
		\int_M A'(\rho)^*(u,v) \cdot w\; \vol =\int_M v \cdot \left( A'(\rho)(u,w \right)\vol\;.
		\end{equation}
		Then the mapping
		\begin{equation}
			(\rho,u,v) \mapsto A'(\rho)^*(u,v)
		\end{equation}
		is a smooth differential operator $P^{\infty}(M)\times \Xcal(M)\times \Xcal(M)\to C^\infty(M)$ of order $2k-2$ in its first argument and of order $2k-1$ in its second and third arguments. 
	\end{enumerate} 	
\end{assumption} 
\begin{example}\label{rem:form:A}
An important family of inertia operators is given by
\begin{equation}\label{inertia_operator}
A(\rho)= \sum_{i=0}^k (\nabla^i)^*a_i(\rho) \nabla^i\;,
\end{equation}
where $a_i$
are smooth coefficient functions depending on $\rho$. 
This class of inertia operators stems from semi-invariant Riemannian metrics of the form
\begin{equation}\label{eq:metric:A1}
	G_{\varphi}(X\circ\varphi,Y\circ\varphi)= \sum_{i=0}^k \int_M a_i(\rho) \;g(\nabla^i X,\nabla^i Y) \vol\;.%
\end{equation}
Such metrics are common in shallow water equations and in regularized compressible fluid equations. 
We study metrics of this type in \autoref{geodesic_completeness} and we show in \autoref{lem:conditions:standardexample} below that they satisfy \autoref{assump:inertia} under mild conditions on the 
coefficient functions $a_i$.
\end{example}

We now give the geodesic equation.
\begin{theorem}\label{thm:geodesicequationDiff}
The geodesic equation of an $\Diffvol(M)$-invariant Riemannian metric on the group of smooth diffeomorphisms whose inertia operator fulfills \autoref{assump:inertia} is given by the PDE
\begin{align}
&\big(A(\rho)u\big)_t + \nabla_u A(\rho)u+(\mathrm{div}\hspace{0,1cm}u)A(\rho)u+(\nabla u)^\top A(\rho)u -\frac{\rho}2 \nabla \big(A'(\rho)^*(u,u)\big)=0 \label{eq:epdiff3} \\
&\rho_t+\divv(\rho u)  = 0\label{eq:epdiff2}
\end{align}
where 
$v\mapsto A'(\rho)^*(u,v)$ is the adjoint of the $\rho$-derivative $\dot\rho\mapsto A'(\rho)(u,\dot\rho)$ as in \eqref{eq:weakAprime}.

The geodesic $t\mapsto  \varphi(t,\cdot)$ on $\Diff(M)$ is reconstructed from a solution $(\rho,u)$ by
\begin{equation}
	\ph_t = u \circ\ph .
\end{equation}
As before, the probability function $\rho$ is related to $\varphi$ via $\rho = \det(D\ph^{-1})$.
\end{theorem}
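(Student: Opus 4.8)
The plan is to derive the equation from the variational (critical-point) characterization of geodesics together with a Lagrangian (Euler--Poincar\'e) reduction. Since $G$ is only a weak Riemannian metric on the Fr\'echet manifold $\Diff(M)$, no Levi--Civita connection or geodesic spray is available a priori, so I take a geodesic to be a smooth critical path of the energy functional $E(\varphi)=\tfrac12\int_0^1 G_{\varphi}(\varphi_t,\varphi_t)\,\ud t$ among smooth paths with fixed endpoints. Introducing the Eulerian velocity $u=\varphi_t\circ\varphi^{-1}\in\Xcal(M)$ and noting that $\rho=\det(D\varphi^{-1})$ then satisfies the continuity equation $\rho_t+\divv(\rho u)=0$ --- because $\rho\,\vol$ is the push-forward by $\varphi$ of a fixed volume form, hence advected as a density by the flow of $u$ --- the $\Diffvol(M)$-invariance \eqref{eq:sdiff_inv_metric}, encoded in the form \eqref{eq:right_inv_metricG}, shows that the integrand of $E$ equals the reduced Lagrangian
\begin{equation*}
	\ell(u,\rho)=\tfrac12\int_M u\cdot A(\rho)u\,\vol
\end{equation*}
on $\Xcal(M)\times P^\infty(M)$, with $\rho$ playing the role of an advected parameter. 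By \autoref{assump:inertia}(1),(3) the functional $\ell$ is smooth in $(u,\rho)$ in the Fr\'echet topology, which is what legitimises the variational calculus below.

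Next I would compute the two variational derivatives of $\ell$ and the constrained variations. Self-adjointness of $A(\rho)$ (\autoref{assump:inertia}(2)) gives $\delta\ell/\delta u=A(\rho)u\eqqcolon m$. Differentiating $\ell$ in $\rho$ and using the definition \eqref{eq:weakAprime} of the $L^2$-adjoint $A'(\rho)^*$ gives
\begin{equation*}
	\Big\langle\frac{\delta\ell}{\delta\rho},\dot\rho\Big\rangle=\tfrac12\int_M u\cdot A'(\rho)(u,\dot\rho)\,\vol=\tfrac12\int_M A'(\rho)^*(u,u)\,\dot\rho\,\vol ,
\end{equation*}
so $\delta\ell/\delta\rho=\tfrac12 A'(\rho)^*(u,u)$, again a smooth expression by \autoref{assump:inertia}(3). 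For a variation $\delta\varphi$ with fixed endpoints set $w\coloneqq\delta\varphi\circ\varphi^{-1}$; then the standard compatibility identity for the right logarithmic derivative yields $\delta u=w_t+[u,w]$, with $[\cdot,\cdot]$ the Jacobi--Lie bracket of vector fields, and differentiating the density relation $\varphi^*(\rho\,\vol)=\text{const}$ in the variation parameter gives $\delta\rho=-\divv(\rho w)$.

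Then I would substitute into $0=\delta\int_0^1\ell(u,\rho)\,\ud t=\int_0^1\big(\pair{\delta\ell/\delta u,\delta u}_{L^2}+\pair{\delta\ell/\delta\rho,\delta\rho}_{L^2}\big)\,\ud t$ and integrate by parts, in $t$ and over $M$, to collect the coefficient of the free field $w$. Integration in $t$ produces the term $m_t$ (boundary terms vanish); the bracket term pairs as $\int_M m\cdot[u,w]\,\vol=-\int_M\big(\nabla_u m+(\nabla u)^\top m+(\divv u)\,m\big)\cdot w\,\vol$, which is just skew-symmetry of $\LieD_u$ on one-form densities written out with the Levi--Civita connection; and the $\rho$-term gives $-\tfrac12\int_M A'(\rho)^*(u,u)\,\divv(\rho w)\,\vol=\int_M\tfrac\rho2\nabla\big(A'(\rho)^*(u,u)\big)\cdot w\,\vol$. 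Since $w$ is arbitrary, vanishing of the total coefficient is exactly \eqref{eq:epdiff3} with $m=A(\rho)u$; equation \eqref{eq:epdiff2} is the advection constraint and $\varphi_t=u\circ\varphi$ is the definition of $u$, so the claim follows. Equivalently, one may invoke the Euler--Poincar\'e theorem with advected parameters on the semidirect product of $\Xcal(M)$ with the densities once the hypotheses above are verified.

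The main work is not this formal computation but making it rigorous: the interchange of differentiation and integration defining $\delta\ell/\delta u$ and $\delta\ell/\delta\rho$, and the integrations by parts, must be carried out in the Fr\'echet topology of smooth tensor fields, and this is precisely where \autoref{assump:inertia} enters --- it guarantees that $(\rho,u)\mapsto A(\rho)u$ and $(\rho,u,v)\mapsto A'(\rho)^*(u,v)$ are genuine smooth differential operators, hence smooth maps between the relevant Fr\'echet spaces, so that $\ell$ is smooth and the critical-point calculus is valid. A secondary but error-prone point is the bookkeeping of sign and duality conventions --- the Lie-algebra bracket on $\Xcal(M)$ (taken with a minus sign), the coadjoint operator, the diamond map on densities, and the musical isomorphisms --- when translating the abstract reduced equation into the concrete operators $\nabla_u$, $(\nabla u)^\top$ and $\divv$ of \eqref{eq:epdiff3}.
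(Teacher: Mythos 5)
Your proposal is correct and follows essentially the same route as the paper: both derive \eqref{eq:epdiff3}--\eqref{eq:epdiff2} as the critical-point condition for the energy of a path, using the variation formulas $\delta u = w_t + [u,w]$ and $\delta\rho = -\divv(\rho w)$, the adjoint $A'(\rho)^*$ from \eqref{eq:weakAprime}, and the same integrations by parts producing $\nabla_u m + (\divv u)\,m + (\nabla u)^\top m$ and $\tfrac{\rho}{2}\nabla\big(A'(\rho)^*(u,u)\big)$. The only difference is presentational: you package the computation as Euler--Poincar\'e reduction with $\rho$ as an advected parameter and quote the constrained-variation identities, whereas the paper carries out the same variation directly in the Lagrangian variable $\varphi$ (deriving those identities inline) before passing to Eulerian form.
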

Using the notation $A(\rho)u=m$ one can rewrite \eqref{eq:epdiff3} to obtain:
\begin{equation}
m_t + \nabla_u m+(\mathrm{div}\hspace{0,1cm}u)m+(\nabla u)^\top m -\frac{\rho}2 \nabla \big(A'(\rho)^*(u,u)\big)=0 \label{eq:epdiff4a} 
\end{equation}
From here, one can easily deduce the similarities to the EPDiff equation: the last term in \eqref{eq:epdiff4a} is new. 

\begin{proof}[Proof of \autoref{thm:geodesicequationDiff}]
The energy of a path of diffeomorphisms $ \ph= \ph(t,x)$ is given by
\begin{align}
E(\ph)&=\int_0^1 G_{\ph}(\ph_t,\ph_t)\ud t %
\\
&=\int_0^1 \int_M g\left(A({\rho})( \ph_t\circ\ph^{-1}) ,  \ph_t\circ\ph^{-1}\right)\, \vol \;\ud t
\end{align}
Varying $\ph$ in the direction $h = h(t, x)$ with $h(0,\cdot) = h(1,\cdot) = 0$ we calculate
\begin{equation}
\ud (\ph_t\circ\ph^{-1}).h=h_t\circ\ph^{-1}-\nabla\ph_t\circ\ph^{-1} .(\nabla\ph)^{-1}h\circ\ph^{-1}\;.
\end{equation}
Thus we obtain for the variation of the energy functional:
\begin{align}
\ud E(\ph).h&= 2\int_0^1 \int_M g\left(A({\rho})( \ph_t\circ\ph^{-1}) ,  h_t\circ\ph^{-1}-\nabla\ph_t\circ\ph^{-1}.(\nabla\ph)^{-1}h\circ\ph^{-1}  \right)\, \vol \;\ud t\\
&\qquad+\int_0^1 \int_M g\left(\ud A'({\rho})( \ph_t\circ\ph^{-1}, \ud \rho . h) ,  \ph_t\circ\ph^{-1}\right)\vol \;\ud t
\end{align}
Using the notation $u=\ph_t\circ\ph^{-1}$ we can rewrite this to obtain:
\begin{align}\label{eq:first_term}
\ud E(\ph).h&= 2\int_0^1 \int_M g\left(A(\rho)u ,  h_t\circ\ph^{-1}-\nabla u .h\circ\ph^{-1}  \right)\, \vol \;\ud t\\
&\qquad+\int_0^1 \int_M g\left(A'(\rho)( u,\ud \rho . h) ,  u\right)\vol \;\ud t \label{eq:second_term}
\end{align}
It remains to separate all terms involving the variation $h$. 
In the following we will treat the two terms separately. For the first term \eqref{eq:first_term} we use
\begin{align}
h_t\circ\ph^{-1} &= (h\circ\ph^{-1})_t-\nabla h\circ\varphi^{-1} \partial_t(\ph^{-1})\\
&=(h\circ\ph^{-1})_t+\nabla h\circ\varphi^{-1} (\nabla\ph)^{-1}\circ\ph^{-1}.u\\
&=(h\circ\ph^{-1})_t-\nabla (h\circ\varphi^{-1}).u
\end{align}
Thus we obtain for the first term
\begin{align}
2\int_0^1 \int_M g\left(A(\rho)u ,  (h\circ\ph^{-1})_t-\nabla (h\circ\varphi^{-1}).u-\nabla u .h\circ\ph^{-1}  \right)\, \vol \;\ud t
\end{align}
Using integration  by parts and writing $m=A(\rho)u$ we can rewrite this to obtain
\begin{align}
-2\int_0^1 \int_M g\left(m_t+(\nabla u)^Tm +\nabla_u (m)+(\mathrm{div}\hspace{0,1cm}u)(m) ,  (h\circ\ph^{-1})\right)\, \vol \;\ud t
\end{align}
For the second term \eqref{eq:second_term} we use the variation formula of $\rho$ in direction $h$ (see e.g.\ \cite{BaJoMo2017})
\begin{equation}
\ud\rho.h= -\on{div}(\rho (h\circ\varphi^{-1}))\;.
\end{equation}
Thus, for the second term we have
\begin{align}
\int_0^1 \int_M g\left(A'(\rho)( u,\ud \rho . h) ,  u\right)\vol \;\ud t
&= \int_0^1 \int_M g\left(A'(\rho)( u,-\divv(\rho (h\circ\varphi^{-1}))) ,  u\right)\vol \;\ud t \\
&= \int_0^1 \int_M - \divv(\rho (h\circ\varphi^{-1}))A'(\rho)^*(u,u)\vol \;\ud t \\
&= \int_0^1 \int_M g\Big(h\circ\varphi^{-1}, \rho \nabla A'(\rho)^*(u,u) \Big)\vol \;\ud t .
\end{align}
The conbination of the first and second term now gives equation~\eqref{eq:epdiff3}.
Using that $$\rho_t=-\on{div}(\rho u)$$ we also obtain equation~\eqref{eq:epdiff2}.
\end{proof}

The following theorem on local well-posedness is our first main result.

\begin{theorem}[Local well-posedness]\label{Thm:localwellposedness}
Let $G$ be the $\Diff_{\vol}(M)$-invariant metric \eqref{eq:right_inv_metricG} with inertia operator $A(\rho)$ satisfying 
\autoref{assump:inertia}. Then, given any $(\varphi_0,v_0)\in T\Diff(M)$, there exists a unique non-extendable geodesic
$(\varphi(t),v(t))\in C^{\infty}(J,T\Diff(M))$
on the maximal interval of existence $J$, which is open and contains zero.
\end{theorem}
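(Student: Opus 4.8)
The plan is to follow the Ebin--Marsden strategy, working on the Sobolev completion $\mathcal D^s(M)$ for $s$ large enough (here I expect $s > d/2 + 2k$ will be forced, strictly more than the fully right invariant threshold $d/2+1$, because the inertia operator now depends on $\rho$ through an operator of positive order $2k-2$ and $\rho$ itself loses derivatives relative to $\varphi$). First I would recast the geodesic equations \eqref{eq:epdiff3}--\eqref{eq:epdiff2} as a first-order ODE on the tangent bundle $T\mathcal D^s(M)$. The key move is right-translation to the group: writing $u = \varphi_t\circ\varphi^{-1}$ and $\rho = \det(D\varphi^{-1})$, one expresses everything in terms of the Lagrangian variables $(\varphi, \varphi_t)$ and shows that the resulting right-hand side is a smooth vector field $F$ on $T\mathcal D^s(M)$. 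Concretely, $\rho$ is a smooth function of $\varphi$ (the Jacobian), and the geodesic spray involves $A(\rho)^{-1}$ applied to the lower-order terms; so I would need: (i) that $A(\rho)\colon \Xcal^s \to \Xcal^{s-2k}$ is invertible with smooth-in-$\rho$ inverse (elliptic regularity plus positivity from \autoref{assump:inertia}(2)), and (ii) that conjugating these operators by $R_\varphi$ (composition on the right by $\varphi$) produces smooth maps on the appropriate Sobolev bundles. Step (ii) is the classical "the map $\varphi \mapsto \varphi^*\circ P \circ (\varphi^{-1})^*$ is smooth" lemma for a differential operator $P$, combined with the fact that composition $H^s\times \mathcal D^s \to H^s$ loses no derivatives.

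The heart of the argument is a careful derivative count. Expanding \eqref{eq:epdiff4a}, the worst terms are $\nabla_u m$ and $(\nabla u)^\top m$ with $m = A(\rho)u$: these are order $2k+1$ in $u$, but after applying $A(\rho)^{-1}$ (which gains $2k$ derivatives) they land one derivative short — the familiar transport-term cancellation that makes the spray smooth. The genuinely new term is $\tfrac{\rho}{2}\nabla(A'(\rho)^*(u,u))$; by \autoref{assump:inertia}(3), $A'(\rho)^*(u,u)$ is of order $2k-1$ in $u$, so $\nabla(A'(\rho)^*(u,u))$ is of order $2k$ in $u$, and after $A(\rho)^{-1}$ this is \emph{bounded} (order $0$ net) — good, it does not even threaten smoothness. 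Similarly the $\rho$-dependence: $\rho = \Jac(\varphi^{-1})$ is as smooth as $\varphi$ (in fact one derivative better in the Lagrangian picture, since $\det D\varphi^{-1}\in H^{s-1}$), and $A$ depends on $\rho$ through an operator of order $2k-2$, which after the $A(\rho)^{-1}$ gain is harmless. The conclusion is that $F \in C^\infty(T\mathcal D^s(M), T(T\mathcal D^s(M)))$ for each fixed large $s$.

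Once smoothness of $F$ on $T\mathcal D^s(M)$ is established, the Picard--Lindel\"of theorem on Banach manifolds (see \cite{Lan1999}) yields, for each $(\varphi_0, v_0)$ of Sobolev class $s$, a unique $C^\infty$-in-time integral curve on a maximal open interval $J_s \ni 0$, depending smoothly on initial data. To upgrade from Sobolev to smooth solutions one invokes the no-loss-no-gain result (\autoref{app:noloss}): if the data $(\varphi_0,v_0)$ are $C^\infty$, then the $H^s$-solution is in fact $C^\infty$ for all $t\in J_s$, and moreover $J_s$ is independent of $s$ (because a solution leaving $\mathcal D^s$ for large $s$ would, by no-loss-no-gain, also have to leave $\mathcal D^{s'}$ for smaller $s'$, and vice versa). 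This gives the maximal interval $J$ and the smooth dependence in the Fréchet topology.

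I expect the main obstacle to be the precise verification that all the terms on the right-hand side, after right-translation and after applying $A(\rho)^{-1}$, define \emph{smooth} (not merely continuous) maps between the Sobolev tangent bundles — in particular tracking how the $\rho$-dependence of $A$ interacts with the composition operators, and pinning down the sharp lower bound on $s$. The transport-term cancellation must be made explicit rather than invoked by analogy, since the presence of the order-$(2k-2)$ $\rho$-dependence and the new $A'(\rho)^*$-term means the fully right invariant computation does not transfer verbatim; this is exactly where the "pitfalls" and the higher regularity requirement advertised in the introduction will show up.
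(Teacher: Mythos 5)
Your proposal is correct and follows essentially the same route as the paper: extend to $\mathcal D^s(M)$ for $s>d/2+2k$, show the right-translated spray is smooth by writing the transport term as the commutator $[A(\rho),\nabla_u]u$ (which drops the order from $2k+1$ to $2k$ in $u$), invert $A(\rho)$ by elliptic regularity, apply Picard--Lindel\"of, and pass to the smooth category via the no-loss-no-gain lemma. The only detail you leave implicit --- and which the paper must work out --- is that the commutator costs one derivative in $\rho$ (order $2k-1$ instead of $2k-2$), which is precisely what forces the threshold $s>d/2+2k$ through the smoothness lemma for $\varphi\mapsto\det(D\varphi^{-1})$.
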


We postpone the details of the proof to the end of~\autoref{wellposedness:Ds}. 
In essence, the main idea is to extend the metric and the geodesic spray to the Sobolev 
completion $\mathcal D^s(M)$.
This allows us to interpret the geodesic equation as an ODE on a Hilbert manifold and thus to use the theorem of Picard--Lindel\"off to prove local well-posedness on the completion. 
The statement in the smooth category then follows by a no-loss-no-gain result.
By a small modification of the proof one also obtains the corresponding result with a potential.

\begin{corollary}
	Let $G$ be as in \autoref{Thm:localwellposedness} and let $V\colon P^\infty(M)\to \RR$ be a potential functional such that its variational derivative $\delta V/\delta \rho$ is a smooth (non-linear) differential operator of order $2k-2$ or less.
	Then the statement of \autoref{Thm:localwellposedness} is valid also for the flow of the Lagrangian on $T\Diff$ given by
	\[
		L(\varphi,\dot\varphi) = \frac{1}{2} G_{\varphi}(\dot\varphi,\dot\varphi) - V(\det(D\varphi^{-1})).
	\]
\end{corollary}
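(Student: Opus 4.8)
The plan is to follow the proof of \autoref{Thm:localwellposedness} essentially verbatim, treating the potential as a lower-order perturbation of the geodesic spray. The first step is to identify the Euler--Lagrange equations of $L$. Since $V$ enters only through $\rho=\det(D\varphi^{-1})$, and since $\ud\rho.h=-\divv(\rho\,(h\circ\varphi^{-1}))$ (the variation formula used in the proof of \autoref{thm:geodesicequationDiff}), varying $\int_0^1 V(\rho)\,\ud t$ produces, after the same integration by parts performed on the $A'(\rho)^*$-term, precisely an extra force $\rho\nabla\big(\delta V/\delta\rho\big)$ in the momentum equation. Hence the flow of $L$ is governed by the system \eqref{eq:main_eq}, i.e.\ \eqref{eq:epdiff4a}--\eqref{eq:epdiff2} with the additional term $\rho\nabla(\delta V/\delta\rho)$, together with $m=A(\rho)u$ and the reconstruction $\varphi_t=u\circ\varphi$; as in the metric case this is a second-order ODE whose configuration variable is $\varphi$ and whose velocity is $v=\varphi_t$.

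The second, and only substantial, step is to pass to the Sobolev completion $\mathcal D^s(M)$ for $s$ sufficiently large (the same index as in \autoref{Thm:localwellposedness}). Writing the system in Lagrangian variables --- precomposing with $\varphi$ and applying $A(\rho)^{-1}$ to solve for $\varphi_{tt}$ --- yields the geodesic spray of \autoref{Thm:localwellposedness} \emph{plus one new vector field} on $T\mathcal D^s(M)$, the one coming from $\rho\nabla(\delta V/\delta\rho)$. The task is to show this vector field is smooth on $T\mathcal D^s(M)$, and this rests on exactly the ingredients already used for the term $\tfrac{\rho}{2}\nabla\big(A'(\rho)^*(u,u)\big)$: (i) $\varphi\mapsto\rho=\det(D\varphi^{-1})$ is a smooth map $\mathcal D^s(M)\to H^{s-1}(M)$ with values in the positive densities; (ii) composition and conjugation with $H^s$-diffeomorphisms are smooth in the relevant sense (the $R_\varphi$-type lemmas, cf.\ \cite{IKT2013,EbMa1970}); and (iii) $\rho\mapsto A(\rho)^{-1}$ depends smoothly on $\rho$ and gains $2k$ derivatives by ellipticity. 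A derivative count then explains the hypothesis: from $\rho\in H^{s-1}$ a differential operator of order $2k-2$ gives $\delta V/\delta\rho\in H^{s-2k+1}$, its gradient lies in $H^{s-2k}$, multiplication by $\rho$ preserves that, and $A(\rho)^{-1}$ brings it back to $H^{s}$ --- the regularity of $\varphi$ itself, with no loss. Any higher order in $V$ would break this balance, exactly as it would for the analogous term in the spray; this is why $\operatorname{ord}(\delta V/\delta\rho)\le 2k-2$ is the right threshold.

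Once smoothness of the perturbed spray is established, the Picard--Lindel\"off theorem on the Hilbert manifold $T\mathcal D^s(M)$ gives a unique non-extendable integral curve, depending smoothly on the initial data, on a maximal open interval $J\ni 0$. The final step upgrades this to the smooth category by the no-loss-no-gain result of \autoref{app:noloss}: because $\delta V/\delta\rho$ is by hypothesis a smooth (nonlinear) differential operator $P^\infty(M)\to C^\infty(M)$, the additional term $\rho\nabla(\delta V/\delta\rho)$ leaves the persistence-of-regularity argument intact, so for smooth $(\varphi_0,v_0)$ the solution remains in $T\mathcal D^\infty(M)=T\Diff(M)$ and $J$ is independent of $s$.

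I expect the main obstacle to be the smoothness verification in the second step --- i.e.\ checking that the new vector field from the potential is genuinely smooth on $T\mathcal D^s(M)$ and derivative-neutral, so that the nonlinearity $\rho\mapsto\delta V/\delta\rho$ together with $\rho=\det(D\varphi^{-1})$ and the conjugations do not conspire to lose a derivative. Since each piece is no worse than what already appears in the geodesic spray of \autoref{Thm:localwellposedness}, this is a direct, if bookkeeping-heavy, adaptation; the remaining steps (the variational computation, Picard--Lindel\"off, no-loss-no-gain) are routine once the metric case is available.
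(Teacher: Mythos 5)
Your proposal matches the paper's argument: the paper likewise records that the potential only modifies the $Q^4$-term of the spray to $\rho\nabla\bigl(\tfrac{1}{2}A'(\rho)^*(u,u)-\tfrac{\delta V}{\delta\rho}\bigr)+A'(\rho)(u,\divv(\rho u))$, checks via the same derivative count that the new term is still of order $2k-1$ in $\rho$ (hence handled by \autoref{lem:smoothrho} and $A_\varphi^{-1}$ exactly as before), and then invokes Picard--Lindel\"off on $T\mathcal D^s(M)$ followed by the no-loss-no-gain result. No gaps; this is the intended proof.
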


In the following section we discuss geodesic completeness, i.e., the question of global in time existence of solutions to the geodesic initial value problem.

\subsection{Geodesic completeness}\label{geodesic_completeness}
For geodesic completeness we will focus on the class of operators introduced in \autoref{rem:form:A}, i.e.,
\begin{equation}\label{inertia_operator_repeatdef}
A(\rho)= \sum_{i=0}^k (\nabla^i)^*a_i(\rho) \nabla^i\;,
\end{equation}
where $a_i\in C^{\infty}(\mathbb R_{>0},\mathbb R_{\geq 0})$
are smooth coefficient functions depending on $\rho$. 
For this class we are able to prove geodesic completeness---the second of our main results.
First we show that the class satisfies \autoref{assump:inertia}:
\begin{lemma}\label{lem:conditions:standardexample}
The operator \eqref{inertia_operator_repeatdef} satisfies \autoref{assump:inertia} if one of the conditions
\begin{enumerate}
\item $k=1$, $a_0(\rho)>0$ and $a_1(\rho)=\operatorname{const}>0$; 
\item $k\geq 2$, $a_0(\rho)>0$ and $a_k(\rho)>0$; 
\end{enumerate}
is satisfied.  %
\end{lemma}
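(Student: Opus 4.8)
The plan is to verify each of the three conditions in \autoref{assump:inertia} for the operator $A(\rho)=\sum_{i=0}^k(\nabla^i)^*a_i(\rho)\nabla^i$ under the two sets of hypotheses on the coefficients, and to identify exactly where the dichotomy between case (1) and case (2) forces itself. First I would address condition (1): the composition $(\rho,u)\mapsto A(\rho)u$ should be a smooth differential operator of order $2k$ in $u$ and $2k-2$ in $\rho$. For each summand $(\nabla^i)^*a_i(\rho)\nabla^i u$, differentiating $i$ times in $u$ costs $i$ derivatives, and the formal adjoint $(\nabla^i)^*$ costs another $i$, giving order $2i\le 2k$, with equality only for the top term $i=k$. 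The crucial point is the $\rho$-dependence: the factor $a_i(\rho)$ itself contributes no derivatives of $\rho$, but when $(\nabla^i)^*$ is expanded using the Leibniz rule, derivatives of the coefficient $a_i(\rho)$ appear, at most $i-1$ of them (one derivative of the adjoint has already been ``used'' against $\nabla^i u$, or more precisely the highest-order-in-$\rho$ contribution comes with the lowest order in $u$). This is where case (1) versus case (2) matters: for the top term $i=k$, if $a_k$ is constant then $a_k(\rho)$ contributes exactly zero $\rho$-derivatives, so the $\rho$-order of the top term is $0\le 2k-2$; but if $a_k$ is allowed to be a genuine function of $\rho$, then $(\nabla^k)^*a_k(\rho)\nabla^k$ can produce up to $k-1$ derivatives of $a_k(\rho)$, hence up to $k-1$ derivatives of $\rho$, and we need $k-1\le 2k-2$, i.e. $k\ge 1$ — but chasing the precise count shows the bound is tight and only works once $k\ge 2$ (for $k=1$ a nonconstant $a_1$ would give a $\rho$-derivative of order $0$ in that subterm but the interaction term $a_1'(\rho)\nabla\rho\cdot\nabla u$ is order $1$ in $\rho$, exceeding $2k-2=0$). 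So the case split is forced precisely by this derivative-counting in the top-order term, and I would present it as a clean Leibniz expansion with explicit bookkeeping of which factor carries the derivatives.

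Next I would verify condition (2). Linearity in $u$ is immediate since each summand is linear in $u$. Self-adjointness with respect to $L^2$ follows because each $(\nabla^i)^*a_i(\rho)\nabla^i$ is manifestly of the form $T^*T$ composed with multiplication by the scalar $a_i(\rho)$ sandwiched symmetrically — more precisely $\pair{(\nabla^i)^*a_i(\rho)\nabla^i u, v}_{L^2}=\pair{a_i(\rho)\nabla^i u,\nabla^i v}_{L^2}$ which is symmetric in $u,v$ since $a_i(\rho)\ge 0$ is a scalar. Positivity: $\pair{A(\rho)u,u}_{L^2}=\sum_i\pair{a_i(\rho)\nabla^i u,\nabla^i u}_{L^2}\ge 0$, and it is strictly positive (hence $A(\rho)$ injective, and by ellipticity + self-adjointness + compactness of $M$, invertible) provided the $i=0$ term with $a_0(\rho)>0$ dominates — this uses $a_0(\rho)>0$, which appears in both cases. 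Ellipticity of order $2k$ comes from the principal symbol: the top symbol is $\sigma_{2k}(A(\rho))(x,\xi)=a_k(\rho(x))\,|\xi|^{2k}\,\mathrm{Id}$ (using that the principal symbol of $(\nabla^k)^*$ against $\nabla^k$ is $|\xi|^{2k}$ up to the metric identifications), which is invertible for $\xi\ne 0$ exactly because $a_k(\rho)>0$ — and here again the hypothesis $a_k(\rho)>0$ (either as a positive constant in case (1) or a positive function in case (2)) is precisely what is needed.

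Finally, condition (3): I would compute $A'(\rho)(u,\dot\rho)=\sum_i(\nabla^i)^*\big(a_i'(\rho)\dot\rho\,\nabla^i u\big)$, which is linear in $\dot\rho$ and in $u$ separately. Its $L^2$-adjoint in the $\dot\rho$ slot against a test vector field $v$ is obtained by moving $(\nabla^i)^*$ back onto $v$ and extracting the scalar: $\pair{A'(\rho)(u,\dot\rho),v}_{L^2}=\sum_i\pair{a_i'(\rho)\dot\rho\,\nabla^i u,\nabla^i v}_{L^2}=\int_M\dot\rho\,\big(\sum_i a_i'(\rho)\,g(\nabla^i u,\nabla^i v)\big)\vol$, so $A'(\rho)^*(u,v)=\sum_i a_i'(\rho)\,g(\nabla^i u,\nabla^i v)$. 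This is a differential operator of order $i\le k$ in each of $u$ and $v$ — but I need order $\le 2k-1$, which is satisfied since $k\le 2k-1$ for $k\ge 1$; and order $2k-2$ in $\rho$, which holds since $a_i'(\rho)$ contributes no $\rho$-derivatives at all (order $0\le 2k-2$). Smoothness of all these maps as functions of $(\rho,u,\ldots)$ follows from smoothness of the $a_i$, the fact that $\rho\mapsto a_i(\rho)$ and $\rho\mapsto a_i'(\rho)$ are smooth Nemytskii-type operators on the Fréchet space $C^\infty(M)$ (using $\rho>0$ on the compact $M$), and that composition and the covariant derivatives are smooth. The main obstacle, as flagged above, is the careful Leibniz-rule bookkeeping in condition (1) for the top-order term: getting the $\rho$-order and $u$-order simultaneously within the required bounds is what both drives the case distinction and carries the real content of the lemma; the self-adjointness, positivity, ellipticity, and condition (3) are comparatively routine symbol and integration-by-parts computations.
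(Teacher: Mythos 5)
Your overall route is the same as the paper's: verify the three items of \autoref{assump:inertia} directly, with (2) and (3) done by integration by parts and the principal symbol, and the case split coming from counting $\rho$-derivatives in $(\nabla^i)^*a_i(\rho)\nabla^i$. (The paper in fact only writes out item (3) and declares (1)--(2) ``straightforward'', so your write-up is more complete on exactly the point you correctly identify as carrying the content.) Your formula $A'(\rho)^*(u,v)=\sum_i a_i'(\rho)\,g(\nabla^i u,\nabla^i v)$ is the scalar-valued adjoint consistent with the codomain $C^\infty(M)$ demanded in \autoref{assump:inertia}(3) and with the term $\rho\nabla\bigl(A'(\rho)^*(u,u)\bigr)$ in the geodesic equation; the paper's proof writes the same quantity with an extra outer gradient, but the order bounds are checked the same way.

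There is, however, a genuine error in the one step you yourself flag as the crux. The Leibniz expansion of $(\nabla^i)^*\bigl(a_i(\rho)\nabla^i u\bigr)$ produces terms in which \emph{up to $i$} (not $i-1$) derivatives fall on $a_i(\rho)$: the extreme term is $(\pm 1)^i(\nabla^i a_i(\rho))\cdot\nabla^i u$ plus curvature corrections, which is of order $i$ in $\rho$ and order $i$ in $u$. With your stated count of $i-1$ the constraint would be $i-1\le 2k-2$ for all $i\le k$, i.e.\ $k\ge 1$, and the lemma's case distinction would disappear; you only recover the correct threshold by asserting that ``chasing the precise count'' gives $k\ge 2$, without doing so. Indeed your own counterexample for $k=1$, the term $a_1'(\rho)\,\nabla\rho\cdot\nabla u$ coming from a \emph{single} adjoint derivative landing on $a_1(\rho)$, already has one $\rho$-derivative and so contradicts ``at most $i-1=0$''. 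With the corrected count the argument closes cleanly: the $i$-th summand contributes $\rho$-order at most $i$ whenever $a_i$ is non-constant, so one needs $i\le 2k-2$ for every such $i$; the worst case $i=k$ gives $k\le 2k-2$, i.e.\ $k\ge 2$, while for $k=1$ the $i=1$ summand violates $1\le 0$ unless $a_1'\equiv 0$, forcing $a_1$ constant (the $i=0$ summand is always of $\rho$-order zero). Everything else in your proposal --- linearity, self-adjointness, positivity from $a_0>0$, ellipticity from the symbol $a_k(\rho)\abs{\xi}^{2k}$, and the order count for $A'(\rho)^*$ --- is correct and matches the paper.
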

\begin{proof}
It is straight-forward to see that $A(\rho)$ satisfies item (1) and (2) of \autoref{assump:inertia}. 
Note, that we use here that $a_1$ does not depend on $\rho$ for the case of an operator with $k=1$.
To see that it satisfies item (3) we calculate an explicit expression of 
the $L^2$ adjoint: 
\begin{align}
		\int_M A'(\rho)^*(v,w) \cdot w\; \vol &=\int_M v \cdot \left( A'(\rho)(w,u) \right)\vol\;.\\	
	&=\int_M v \cdot \left(  \sum_{i=0}^k (\nabla^i)^*a_i'(\rho)\operatorname{div}(u) \nabla^i w \right)\vol	
	\\&= 	\sum_{i=0}^k \int_M a_i'(\rho) \nabla^ i v \cdot \nabla^i w \; \operatorname{div}(u)  \vol\\
	&= 	\sum_{i=0}^k \int_M   u \cdot \nabla \left(a_i'(\rho) \nabla^ i v \cdot \nabla^i w  \right) \vol\,.
\end{align}
Thus we have:
\begin{equation}
A'(\rho)^*(v,w)=  \sum_{i=0}^k \nabla \left(a_i'(\rho) \nabla^ i v \cdot \nabla^i w \right).
\end{equation}
Counting derivatives we obtain that this is a smooth differential operator $P^{\infty}(M)\times \Xcal(M)\times \Xcal(M)\to C^\infty(M)$ of order one in its first argument  and of order $k+1$ in its second and third argument. Thus all assumptions are satisfied.
\end{proof}
Using this lemma we can directly apply the local  well-posedness result of the previous section to obtain the result:
\begin{corollary}
Let $A$ be an inertia operator of the form  \eqref{inertia_operator_repeatdef} that satisfies either condition (1) or (2) of \autoref{lem:conditions:standardexample}. Then the geodesic equation of the corresponding $\Diffvol(M)$-invariant Riemannian metric on the group of smooth diffeomorphisms 
\begin{align}
&\big(A(\rho)u\big)_t + \nabla_u A(\rho)u+(\mathrm{div}\hspace{0,1cm}u)A(\rho)u+(\nabla u)^\top A(\rho)u -\frac{\rho}2 \nabla \left(\sum_{i=0}^k \nabla \left(a_i'(\rho) \abs{\nabla^ i u}^2 \right)\right)=0 \label{eq:epdiff4} \\
&\rho_t+\divv(\rho u)  = 0\label{eq:epdiff5}
\end{align}
is locally well-posed in the sense of \autoref{Thm:localwellposedness}. 
\end{corollary}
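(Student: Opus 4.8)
The plan is to combine the two preceding results, \autoref{lem:conditions:standardexample} and \autoref{Thm:localwellposedness}, together with the explicit form of the geodesic equation from \autoref{thm:geodesicequationDiff}; no genuinely new argument is needed. First I would invoke \autoref{lem:conditions:standardexample}: under either hypothesis (1) or (2), the inertia operator $A(\rho)$ of the form \eqref{inertia_operator_repeatdef} satisfies all three items of \autoref{assump:inertia}. From the computation carried out in the proof of that lemma one has the closed-form expression
\[
	A'(\rho)^*(v,w) = \sum_{i=0}^k \nabla\bigl(a_i'(\rho)\,\nabla^i v\cdot\nabla^i w\bigr),
\]
so specializing to $v=w=u$ gives $A'(\rho)^*(u,u) = \sum_{i=0}^k \nabla\bigl(a_i'(\rho)\,\abs{\nabla^i u}^2\bigr)$.

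Second, I would appeal to \autoref{thm:geodesicequationDiff}: the geodesic equation of the $\Diffvol(M)$-invariant metric \eqref{eq:right_inv_metricG} with inertia operator satisfying \autoref{assump:inertia} is precisely \eqref{eq:epdiff3}–\eqref{eq:epdiff2}. Substituting the explicit formula for $A'(\rho)^*(u,u)$ above into \eqref{eq:epdiff3}, and leaving the continuity equation unchanged, turns this system into exactly \eqref{eq:epdiff4}–\eqref{eq:epdiff5}. Thus the equations displayed in the statement really are the geodesic equations of the metric \eqref{eq:metric:A1} associated with the coefficient functions $a_i$.

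Finally, since \autoref{lem:conditions:standardexample} guarantees that the hypotheses of \autoref{Thm:localwellposedness} are met, that theorem applies verbatim: for any $(\varphi_0,v_0)\in T\Diff(M)$ there is a unique non-extendable geodesic $(\varphi(t),v(t))\in C^\infty(J,T\Diff(M))$ on an open maximal interval $J$ containing zero, which is the asserted local well-posedness. The only point that requires a word of care — and it is already dispatched inside the proof of \autoref{lem:conditions:standardexample} — is the case $k=1$: there one must use that $a_1$ is constant, so that the $\rho$-derivative $A'(\rho)$ and hence its $L^2$-adjoint drop the expected two orders and item (3) of \autoref{assump:inertia} still holds; this is exactly what distinguishes hypothesis (1) from hypothesis (2), and no separate obstacle arises here.
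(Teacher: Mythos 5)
Your proposal is correct and matches the paper's argument exactly: the paper likewise obtains this corollary by combining \autoref{lem:conditions:standardexample} with \autoref{Thm:localwellposedness}, substituting the explicit formula for $A'(\rho)^*(u,u)$ derived in the lemma's proof into the geodesic equation of \autoref{thm:geodesicequationDiff}. Your remark on the $k=1$ case requiring $a_1$ constant is also the right point of care and is handled the same way in the paper.
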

The particular form of this initial operator allows in addition to obtain a global well-posedness result and to characterize the metric completion:
\begin{theorem}[Global well-posedness]\label{Thm:globalwellposedness}
	Let $G$ be a $\Diff_{\vol}(M)$-invariant metric~\eqref{eq:right_inv_metricG} of order $k> \frac{d}2+1$ with inertia operator $A(\rho)$ 
	of the form \eqref{inertia_operator_repeatdef} with $a_1(\cdot)>C_1$ and $a_k(\cdot)>C_2$ for some constants $C_1,C_2>0$.
	We have:
	\begin{enumerate}
	\item The space $\left(\Diff(M),G\right)$ is geodesically complete, i.e., for any initial condition $(\varphi_0,v_0)\in T\Diff(M)$, the unique geodesic 
	$(\varphi(t),v(t))\in C^{\infty}(\mathbb R,T\Diff(M))$ with $(\varphi(0),v(0)) = (\varphi_0,v_0)$ exist for all time $t$.
	\item The metric completion of the space $\left(\Diff(M),\operatorname{dist}^G\right)$ is the space of all Sobolev diffeomorphisms $\mathcal{D}^k(M)$ of regularity $k$, as defined in \autoref{sec:diffgroups}. Here $\operatorname{dist}^G$ denote the induced geodesic distance function. 
	\end{enumerate}
\end{theorem}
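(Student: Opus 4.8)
The plan is to run the standard completeness argument for strong right-invariant Sobolev metrics --- conservation of energy traps the geodesic in a bounded, non-degenerate region, so it exists for all time, while the metric is strong enough that its geodesic distance controls the $H^k$-distance --- and to handle the one genuinely new feature, that the inertia operator $A(\rho)$ depends on the base point $\varphi$ through $\rho=\det(D\varphi^{-1})$. First I would extract the a priori estimate. Along a geodesic the energy $E=\tfrac12 G_\varphi(\dot\varphi,\dot\varphi)=\tfrac12\sum_{i=0}^k\int_M a_i(\rho)\abs{\nabla^i u}^2\vol$, with $u=\dot\varphi\circ\varphi^{-1}$, is constant. Since $a_1(\cdot)>C_1$ and $a_k(\cdot)>C_2$ pointwise and all $a_i\ge 0$, this bounds $\norm{\nabla u(t)}_{L^2}^2+\norm{\nabla^k u(t)}_{L^2}^2\le 2E/\min(C_1,C_2)$ on the maximal interval $J$ \emph{with a constant not involving $\rho$}; interpolating the intermediate orders and using $k>d/2+1$ then bounds $\norm{\divv u(t)}_{L^\infty}$, again $\rho$-independently. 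Feeding this into the continuity equation, equivalently $\partial_t\log\det(D\varphi)=(\divv u)\circ\varphi$, keeps $\det(D\varphi)$, hence $\rho$, in a fixed compact subinterval of $(0,\infty)$ on every bounded subinterval of $J$, so the diffeomorphisms do not degenerate in finite time. With $\rho$ under control, positivity and ellipticity of $A(\rho)$ give $G_\varphi(\dot\varphi,\dot\varphi)\ge c\norm{u}_{H^k}^2$ with $c>0$ locally uniform, hence a bound on $\norm{u(t)}_{H^k}$ on bounded subintervals of $J$; the velocity estimate has thereby been decoupled from the position estimate.

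Next I would convert this into global existence. With $\norm{u(t)}_{H^k}$ and $\rho(t)$ controlled on $[0,T]$, Moser-type composition estimates and a Gr\"onwall argument applied to $\partial_t D^j\varphi=(Du\circ\varphi)\,D^j\varphi+\cdots$ bound $\norm{\varphi(t)}_{H^k}$ and $\norm{\varphi^{-1}(t)}_{H^k}$, while the no-loss-no-gain mechanism of \autoref{app:noloss} --- a higher-order estimate whose leading coefficient is controlled by the already-bounded base-level quantities --- propagates these bounds to every Sobolev level $s$. Then, were $J=(T_-,T_+)$ bounded above, as $t\uparrow T_+$ the geodesic (which lies in $\Diff(M)=\bigcap_s\mathcal D^s(M)$) would stay, for each $s$, in a bounded subset of $T\mathcal D^s(M)$ that is bounded away from its boundary, and $\dot\varphi(t)$ would converge in $H^s$ because $\ddot\varphi=S(\varphi,\dot\varphi)$ with the spray $S$ smooth and bounded on such sets; thus $(\varphi(t),\dot\varphi(t))$ would converge to an interior point of $T\mathcal D^s(M)$, and the local existence result on $\mathcal D^s(M)$ underlying \autoref{Thm:localwellposedness} would extend the geodesic past $T_+$ --- a contradiction. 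Hence $J=\RR$, proving item (1).

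For the metric completion I would first note that the estimates above make $G$ a \emph{strong} Riemannian metric on the Hilbert manifold $\mathcal D^k(M)$: pointwise $G_\varphi(h,h)\simeq\norm{h}_{H^k}^2$, with comparison constants locally uniform on $\mathcal D^k(M)$. Consequently $\dist^G$ is a genuine, finite distance, locally bi-Lipschitz to the $H^k$-distance, and it suffices to prove $(\mathcal D^k(M),\dist^G)$ complete; the completion of $(\Diff(M),\dist^G)$ is then $\mathcal D^k(M)$, since $\Diff(M)$ is $H^k$-dense, hence $\dist^G$-dense, in $\mathcal D^k(M)$. Given a $\dist^G$-Cauchy sequence $(\varphi_n)$, the bi-Lipschitz comparison makes it $H^k$-Cauchy with $H^k$-limit $\varphi_\infty$; and along any path joining two elements of the Cauchy tail, the $\rho$-independent bound of the first step gives $\int\norm{\divv u}_{L^\infty}\,dt$ no larger than a fixed constant times the $G$-length of the path, so the values of $\log\det(D\varphi_n)$ vary over the tail within a range that shrinks with the tail's diameter, whence $\det(D\varphi_n)$ stays uniformly bounded above and away from $0$ for large $n$. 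Therefore $\det(D\varphi_\infty)$ is bounded away from $0$; together with $C^1$-convergence ($k>d/2+1$) and a uniform-injectivity-radius argument this forces $\varphi_\infty\in\mathcal D^k(M)$, which proves item (2).

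I expect the main obstacle to be exactly the coupling isolated in the first step: energy conservation alone does \emph{not} bound $u$ in $H^k$, because the coercivity constant of $A(\rho)$ sees the base point through $\rho$. The way around it, exploited twice above, is that $\rho$ is transported --- so $\det(D\varphi)$ obeys an explicit ODE driven by $\divv u$ --- while the uniform lower bounds on $a_1$ and $a_k$ control $\norm{\nabla u}_{L^2}+\norm{\nabla^k u}_{L^2}$ by a $\rho$-independent constant, which breaks the coupling on finite time intervals. After that, the heaviest technical ingredient is the higher-order propagation estimate behind no-loss-no-gain, which is precisely what forces the stronger regularity hypotheses on $A(\rho)$ in \autoref{assump:inertia}.
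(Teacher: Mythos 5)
Your strategy is sound, but it is not the paper's. The paper's proof of \autoref{Thm:globalwellposedness} is a short reduction (via \autoref{thm:completenessSob}): by \autoref{thm:smooth_extension}, the uniform lower bounds on $a_0$ and $a_k$ make $G$ a smooth \emph{strong} metric on $\mathcal D^k(M)$ that uniformly dominates the fully right invariant Sobolev metric $\bar G$ with inertia operator $1+\Delta^k$; metric completeness of $(\mathcal D^k(M),\dist^{\bar G})$ is then imported wholesale from \cite{BrVi2014}, giving metric completeness for $\dist^G$ and hence item (2); Lang's theorem (a metrically complete strong Riemannian metric is geodesically complete) gives item (1) on $\mathcal D^k(M)$; and the no-loss-no-gain result \autoref{lem:nolossnogain} transfers everything to $\mathcal D^s(M)$, $s\geq k$, and to the smooth category. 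You instead rebuild the a priori machinery from scratch: conservation of energy, the $\rho$-independent control of $\norm{\nabla u}_{L^2}+\norm{\nabla^k u}_{L^2}$, the Liouville/transport control of $\det(D\varphi)$, and a Gr\"onwall continuation argument. What your route buys: it is self-contained, it isolates correctly the one genuinely new coupling (the coercivity constant of $A(\rho)$ sees the base point), and---because $\rho$ is confined to a compact interval on finite time intervals---it would survive relaxing $a_k(\cdot)>C_2$ to $a_k(\cdot)>0$, which is exactly the improvement the paper flags as open in \autoref{sec:outlook}. What the paper's route buys: it outsources the hardest analysis to \cite{BrVi2014} and to Lang, which your sketch instead compresses into a few clauses.

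Those clauses are where I would press you, though none of them is a wrong turn. First, ``the spray $S$ smooth and bounded on such sets'': on a Banach manifold a smooth map need not be bounded on metrically bounded sets, so the escape-time argument genuinely requires the quantitative composition and Gr\"onwall estimates of \cite{BrVi2014}, not merely smoothness of the spray; this is precisely the content you are borrowing implicitly. Second, you describe no-loss-no-gain as ``a higher-order estimate whose leading coefficient is controlled by the already-bounded base-level quantities''; \autoref{lem:nolossnogain} is not an estimate at all but a soft equivariance argument ($T\varphi.X$ is obtained by differentiating the flow along the $\Diffvol(M)$-action), although its conclusion $J_{s+1}=J_s$ is exactly what your propagation step needs. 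Third, your coercivity step $G_\varphi(\dot\varphi,\dot\varphi)\geq c\norm{u}_{H^k}^2$ requires $a_0(\rho)>0$ on the compact range of $\rho$, since the hypothesis $a_1(\cdot)>C_1$ alone does not control $\norm{u}_{L^2}$; the paper itself wavers between $a_0$ and $a_1$ in the statements of the Main Theorem, \autoref{thm:smooth_extension} and \autoref{Thm:globalwellposedness}, so this is inherited rather than introduced, but it should be stated.
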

The proof is postponed to the end of~\autoref{wellposedness:Ds}.

\subsection{Sobolev completion}\label{wellposedness:Ds}

We shall now extend the metric and the geodesic spray to the Sobolev completion $\mathcal D^s(M)$ of $\Diff(M)$ and establish smoothness results.
The theorem of Picard and Lindel\"off then gives local well-posedness of the geodesic initial value problem.
From smoothness of the extended metric and additional conditions on the inertia operator we also obtain global well-posedness.

\begin{theorem}\label{thm:smooth_extension}
Let $G$ be a $\Diff_{\vol}(M)$-invariant metric of the form~\eqref{eq:right_inv_metricG}.%
\begin{enumerate}
\item If the inertia operator $A(\rho)$ satisfies 
	\autoref{assump:inertia} then $G$ extends to a smooth $\mathcal D^s_{\vol}(M)$-invariant metric on 
 $\mathcal D^s(M)$ for every $s>\frac{d}2+2k$.
	\item If the inertia operator $A(\rho)$ is of the form \eqref{inertia_operator_repeatdef} with $k>\frac{d}{2}+1$ then $G$ extends to a  a smooth $\mathcal D^s_{\vol}(M)$-invariant metric on 
 $\mathcal D^s(M)$ for every $s>\frac{d}2+1$ and $s-k\geq 0$. 
 If in addition  $a_0(\cdot)>C_1$ and $a_k(\cdot)>C_2$ for some constants $C_1,C_2>0$ then it extends to a strong Riemannian metric on the Sobolev completion
 $\mathcal D^k(M)$.
	 \end{enumerate}
\end{theorem}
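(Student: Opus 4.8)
The plan is to pass to material (Lagrangian) coordinates: pull every object in \eqref{eq:right_inv_metricG} back along $\varphi$ to the identity, thereby eliminating the inverse $\varphi^{-1}$ — which carries only $H^s$ regularity — from the formula, and then read off smoothness from the standard composition and multiplication lemmas for Sobolev maps (as in \cite{EbMa1970,IKT2013}). Writing $u=h\circ\varphi^{-1}$, $v=k\circ\varphi^{-1}$, substituting $x=\varphi(y)$ in \eqref{eq:right_inv_metricG} and using $\rho\circ\varphi=1/\Jac(\varphi)\eqqcolon 1/J_\varphi$, one obtains
\begin{equation*}
	G_\varphi(h,k)=\int_M (g\circ\varphi)\big(h,\,\mathcal A_\varphi k\big)\,\vol ,\qquad \mathcal A_\varphi k\coloneqq J_\varphi\cdot\big(A(\rho)(k\circ\varphi^{-1})\big)\circ\varphi ,
\end{equation*}
and, for the special class \eqref{inertia_operator_repeatdef}, the equivalent form
\begin{equation*}
	G_\varphi(h,k)=\sum_{i=0}^k\int_M a_i(1/J_\varphi)\,(g\circ\varphi)\big(\nabla^i_\varphi h,\nabla^i_\varphi k\big)\,J_\varphi\,\vol ,\qquad \nabla^i_\varphi h\coloneqq\big(\nabla^i(h\circ\varphi^{-1})\big)\circ\varphi .
\end{equation*}
Expanding the conjugated operators by the chain rule (Faà di Bruno) one checks that $\nabla^i_\varphi$ is a differential operator of order $i$ in $h$ whose coefficient of $\nabla^{i-\ell}h$ is a universal polynomial in the jets of $\varphi$ of order $\le\ell+1$, in $(D\varphi)^{-1}$, and in the pulled-back Christoffel symbols ($0\le\ell\le i-1$); similarly $\mathcal A_\varphi$ is of order $2k$ in $k$, with coefficients smooth in $(D\varphi)^{-1}$, in $g\circ\varphi$, and in the jets of $\varphi$ of order $\le 2k$ — order $\le 2k-1$ of these being forced by the order-$2k-2$ $\rho$-dependence allowed by \autoref{assump:inertia}, the lone order-$2k$ jet arising in the subprincipal part from conjugating the order-$2k$ operator. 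Crucially, in both cases $\varphi^{-1}$ has disappeared, and the roughest coefficient of $\mathcal A_\varphi$ (resp.\ of $\nabla^i_\varphi$) involves $D^{2k}\varphi\in H^{s-2k}$ (resp.\ $D^i\varphi\in H^{s-i}$).

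For claim (1): if $s>\tfrac d2+2k$ then $s-2k>\tfrac d2$, so $H^{s-2k}(M)$ is a Banach algebra, every pointwise product $H^{a}\cdot H^{b}\to H^{s-2k}$ (with $a,b\ge s-2k$) arising in $\mathcal A_\varphi$ is bounded, and the relevant Nemytskii maps — applied to jets of $\varphi$ of order $\le 2k-1$, which lie in Sobolev spaces of order $\ge s-2k+1$ — are smooth. Since $\varphi\mapsto D^j\varphi$ is bounded linear $\mathcal D^s(M)\to H^{s-j}(M)$, and $\varphi\mapsto f\circ\varphi$ is smooth $\mathcal D^s(M)\to H^s(M)$ for every fixed $f\in C^\infty$, it follows that $\varphi\mapsto\{\text{coefficients of }\mathcal A_\varphi\}$ is smooth into the appropriate Sobolev spaces, hence $(\varphi,k)\mapsto\mathcal A_\varphi k$ is a smooth map $T\mathcal D^s(M)\to\{H^{s-2k}\text{-sections over }\varphi\}$, linear in $k$; together with smoothness of $\varphi\mapsto g\circ\varphi$ this gives that $(\varphi,h,k)\mapsto G_\varphi(h,k)$ is smooth on $T\mathcal D^s(M)\oplus T\mathcal D^s(M)$, i.e.\ $G$ is a (weak) Riemannian metric on $\mathcal D^s(M)$.

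For claim (2) I would instead use the sum-of-$\nabla^i_\varphi$ representation. Since $s\ge k\ge i$, in each summand $\nabla^{i-\ell}h\in H^{s-i+\ell}$ is multiplied by a coefficient in $H^{s-\ell-1}$ ($0\le\ell\le i-1$) and by the bounded function $a_i(1/J_\varphi)J_\varphi\in H^{s-1}\hookrightarrow L^\infty$, and all these products land in $L^2$ using only $s>\tfrac d2+1$ — the single exception being the corner $(i,\ell)=(k,k-1)$ at $s=k$, where the coefficient lies merely in $L^2$ and one needs $H^{k-1}\hookrightarrow L^\infty$, i.e.\ $k>\tfrac d2+1$; smoothness in $\varphi$ then follows exactly as in (1), now requiring only the $H^{s-1}$-algebra and $H^{s-1}$-Nemytskii properties. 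The $\Diff_{\vol}$-invariance is immediate, since for $\psi\in\mathcal D^s_{\vol}(M)$ one has $\det D(\varphi\circ\psi)^{-1}=\det D\varphi^{-1}$, so \eqref{eq:right_inv_metricG} is literally unchanged under $(\varphi,h,k)\mapsto(\varphi\circ\psi,h\circ\psi,k\circ\psi)$ (equivalently: the identity holds on the dense subgroup $\Diff_{\vol}(M)$ and persists by continuity). Finally, if in addition $a_0\ge C_1>0$ and $a_k\ge C_2>0$, then on the Hilbert manifold $\mathcal D^k(M)$ (a manifold precisely because $k>\tfrac d2+1$), dropping the non-negative intermediate terms yields
\begin{equation*}
	G_\varphi(h,h)\ \ge\ C_1\,\norm{h\circ\varphi^{-1}}_{L^2}^2+C_2\,\norm{\nabla^k(h\circ\varphi^{-1})}_{L^2}^2\ \ge\ c(\varphi)\,\norm{h}_{H^k}^2 ,
\end{equation*}
using that $\norm{\cdot}_{L^2}+\norm{\nabla^k\cdot}_{L^2}$ is an equivalent norm on $H^k$ and that right composition by $\varphi\in\mathcal D^k(M)$ is a bounded isomorphism of $H^k$; the reverse bound $G_\varphi(h,h)\le C(\varphi)\norm{h}_{H^k}^2$ follows from $a_i(\rho)\in L^\infty$ ($\rho$ taking values in a compact subset of $\RR_{>0}$), and the constants depend continuously on $\varphi$. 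Hence $G_\varphi$ is equivalent to the $H^k$ inner product on every tangent space, i.e.\ $G$ extends to a strong Riemannian metric on $\mathcal D^k(M)$.

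The main obstacle — and the reason the threshold $s>\tfrac d2+2k$ in (1) exceeds the $s>\tfrac d2+1$ familiar from the fully right-invariant theory — is precisely this conjugation step: for an inertia operator satisfying only \autoref{assump:inertia}, all $2k$ derivatives must be carried by a single argument, so $\mathcal A_\varphi$ genuinely contains the rough coefficient $D^{2k}\varphi\in H^{s-2k}$, and turning $\varphi\mapsto\mathcal A_\varphi$ into a smooth operator-valued map forces $s-2k>\tfrac d2$. For the class \eqref{inertia_operator_repeatdef} the symmetric weak form \eqref{eq:metric:A1} redistributes the $2k$ derivatives evenly between the two arguments, no single coefficient is then rougher than $H^{s-k}$, and the obstruction collapses to the lone corner case absorbed by the hypothesis $k>\tfrac d2+1$.
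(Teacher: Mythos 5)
Your proposal is correct, and its backbone — pulling everything back along $\varphi$ so that $\varphi^{-1}$ disappears from the formula, then reading off smoothness of the conjugated operator from Sobolev multiplication, composition, and Nemytskii lemmas — is exactly the paper's strategy; your explicit Fa\`a di Bruno bookkeeping for part (1) is precisely the content the paper outsources to \autoref{lem:smoothrho}, including the observation that the $\rho$-dependence may be one order higher than \autoref{assump:inertia} requires. You diverge in two places, both legitimately. For part (2) the paper factors the conjugated operator as a composition $\bigl(R_\varphi\circ(\nabla^i)^*\circ R_{\varphi^{-1}}\bigr)\circ\bigl(R_\varphi\circ M_{a_i(\rho)}\circ R_{\varphi^{-1}}\bigr)\circ\bigl(R_\varphi\circ\nabla^i\circ R_{\varphi^{-1}}\bigr)$ and invokes the Ebin--Marsden results for the constant-coefficient pieces plus the identity $R_\varphi\circ M_\rho\circ R_{\varphi^{-1}}=M_{\rho\circ\varphi}$; you instead work with the symmetric quadratic form \eqref{eq:metric:A1}, distributing $i$ derivatives onto each argument so that every pointwise product lands in $L^2$ under $s>\tfrac d2+1$, $s\ge k$, with the single corner case $(i,\ell)=(k,k-1)$ at $s=k$ absorbed by $k>\tfrac d2+1$. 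Your version has the advantage of never producing an object of negative Sobolev order (the paper's composed operator lands in $T^{s-2i}$, which at $s=k$, $i=k$ must be read distributionally), at the cost of redoing the coefficient analysis that the paper borrows wholesale from the right-invariant theory. For the strong-metric claim the paper argues by domination: $G$ is uniformly stronger than the right-invariant metric with inertia operator $1+\Delta^k$, for which strength on $\mathcal D^k(M)$ is known from \cite{BrVi2014}; your direct two-sided norm equivalence (dropping the intermediate terms, using that $\norm{\cdot}_{L^2}+\norm{\nabla^k\cdot}_{L^2}$ is an equivalent $H^k$-norm and that right composition by $\varphi\in\mathcal D^k(M)$ is a bounded isomorphism of $H^k$) proves the same thing self-containedly and in addition makes the constants' dependence on $\varphi$ explicit. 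Your closing diagnosis of why the threshold jumps to $s>\tfrac d2+2k$ in the general case — all $2k$ derivatives forced onto one argument, hence a genuine $D^{2k}\varphi\in H^{s-2k}$ coefficient — is the correct structural explanation and matches the paper's remark.
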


\begin{remark}
For right invariant metrics on $\Diff(M)$ it is easy to show that the theorem holds if the inertia operator $A$ is a differential operator, c.f. \cite{EbMa1970}. For more general inertia operators the situation is  more complicated:  for Pseudodifferential Operators that are represented as a Fourier multipliers the statement has been proven on the diffeomorphism group of the circle and of $\mathbb R^d$, see \cite{EK2014,BEK2015}. For  arbitrary compact manifolds no results beyond the class of differential operators are known. 
\end{remark}
\begin{proof}
The metric in terms of $h,k\in T_{\varphi}\mathcal D^s(M)$ is given by
\begin{equation}
G_{\varphi}(h,k)= \int_M g(A(\rho)(h\circ\varphi^{-1}),k\circ\varphi^{-1})\vol\;.
\end{equation}
Since inversion $\varphi\mapsto \varphi^{-1}$ is only continous, but not $C^1$, the smoothness of the metric is not clear a priori.

To show the smoothness of the metric we adopt the strategy devised for $\Diff(M)$-invariant metrics on $\mathcal D^s(M)$: by a change of coordinates we write the metric as
\begin{equation}
G_{\varphi}(h,k)= \int_M g((A(\rho)(h\circ\varphi^{-1}))\circ\varphi,k)\varphi^*\vol=
\int_M g((R_{\varphi} \circ A(\rho)\circ R_{\varphi^{-1}})(h),k)\varphi^*\vol\;.
\end{equation}
Since the mapping $\varphi\mapsto \varphi^*\vol = \det(D\varphi)\vol$ is smooth for $s>\frac{d}{2}+1$ the smoothness of the metric reduces to the smoothness of the bundle operator 
\begin{equation}
	T\mathcal D^s(M) \to T^{s-2k}\mathcal D^s(M);\qquad (\varphi,h) \mapsto A_{\varphi}h:=(R_{\varphi} \circ A(\rho)\circ R_{\varphi^{-1}})(h)\;,
\end{equation}
where $T^{s-2k}\mathcal D^s(M)$ denotes the bundle above $\mathcal D^s(M)$ whose fiber vectors belong to $H^{s-2k}(M,TM)$ (see \citet{EbMa1970} for details on this infinite dimensional vector bundle). 
Now the smoothness statement follows directly from \autoref{lem:smoothrho} using \autoref{assump:inertia}.
Note that we need only a slightly weaker condition to get smoothness of the metric, namely  $A$ can be of one order higher in $\rho$ as compared to the assumptions.

Using the particular form of the inertia operator in item (2) of the theorem we are able to significantly weaken the assumptions on the Sobolev order $s$. 
Therefore we  decompose $A$ as
\begin{equation}
A(\rho)= \sum_{i=0}^{k} (\nabla^i)^* \circ M_{a_i(\rho)}\circ \nabla^i,
\end{equation}
with the 
$M_{a_i(\rho)}$ operator being multiplication by $a_i(\rho)$.  
We then have
\begin{align}
&(A)_{\varphi}=R_{\varphi} \circ A(\rho)\circ R_{\varphi^{-1}} = 
R_{\varphi} \circ \left(\sum_{i=0}^{k}  (\nabla^i)^* \circ M_{a_i(\rho)}\circ \nabla^i\right) \circ R_{\varphi^{-1}}\\&\qquad=
\sum_{i=0}^{k}  \left(\left(R_{\varphi} \circ (\nabla^i)^* \circ R_{\varphi^{-1}}\right) \circ \left(R_{\varphi} \circ   M_{a_i(\rho)} \circ R_{\varphi^{-1}} \right)\circ \left(R_{\varphi} \circ \nabla^i \circ R_{\varphi^{-1}}\right)\right)
\end{align}
We can thus show the desired  smoothness result by analyzing the components. The operators $\nabla^i$ and its adjoint are constant coefficient differential operators, thus we can use the results  for fully right invariant metric to  obtain that the mappings
\begin{align}
T\mathcal D^{s}(M)\to T^{s-i}\mathcal D^s(M);\qquad (\varphi,h) \mapsto (R_{\varphi} \circ \nabla^i \circ R_{\varphi^{-1}})(h)\\
T^{s-i}\mathcal D^{s}(M)\to T^{s-2i}\mathcal D^s(M);\qquad (\varphi,h) \mapsto (R_{\varphi} \circ (\nabla^i)^*\circ R_{\varphi^{-1}})(h)\\
\end{align}
are smooth for $1\leq i\leq k$, see e.g.\ \cite{EbMa1970}. 
It remains to study the conjugation of the multiplication operator. Here we use that
\begin{equation}
R_{\varphi} \circ   M_{\rho} \circ R_{\varphi^{-1}}= M_{\rho\circ\varphi}\;.
\end{equation}
Thus, using the proof of \autoref{lem:smoothrho}, the smoothness of the mapping 
\begin{align}
\mathcal D^{s}(M)\to L(H^{q}(M,TM),H^{q}(M,TM));\qquad \varphi \mapsto M_{\rho\circ\varphi}
\end{align}
follows for $0\leq q\leq s-1$. Here we used the Sobolev embedding theorem.
It remains to show that $G$ is a strong metric on the Sobolev completion $\mathcal D^{k}(M)$. Since we assumed that $a_k(\rho)>C_2>0$ and $a_0(\rho)>C_1>0$ it follows that the metric is uniformly stronger than the $\Diff(M)$-right invariant Sobolev metric $\bar G$ with inertia operator $1+\Delta^k$. 
Thus the metric $G$ is a strong Riemannian metric on $\mathcal D^{k}(M)$ as this statement holds for the metric $\bar G$, c.f. \cite{BrVi2014}.%
\end{proof}

We are now in position to prove local well-posedness of the geodesic equation on the Sobolev completions.
\begin{theorem}\label{wellposednessSob}
Let $G$ be the $\Diffvol(M)$-invariant metric with inertia operator $A(\rho)$ fulfilling \autoref{assump:inertia}.
Let $s>\frac{d}2+2k$ . 
Then, given any $(\varphi_0,v_0)\in T\mathcal D^{s}(M)$, there exists a unique non-extendable geodesic
$(\varphi(t),v(t))\in C^{\infty}(J^s,T\mathcal D^{s}(M))$
on the maximal interval of existence $J^s$, which is open and contains zero.
\end{theorem}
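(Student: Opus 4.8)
The plan is to realize the geodesic equation \eqref{eq:epdiff3}--\eqref{eq:epdiff2} as the flow of a smooth vector field (the geodesic spray) on the tangent bundle $T\mathcal D^s(M)$, and then invoke the Picard--Lindel\"off theorem on the Hilbert manifold $T\mathcal D^s(M)$. Concretely, I would first recall from \autoref{thm:smooth_extension}(1) that, for $s>\frac d2+2k$, the metric $G$ extends to a smooth (weak) Riemannian metric on $\mathcal D^s(M)$, with the associated bundle operator $A_\varphi = R_\varphi\circ A(\rho)\circ R_{\varphi^{-1}}\colon T\mathcal D^s(M)\to T^{s-2k}\mathcal D^s(M)$ smooth. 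The key point to establish is that the geodesic spray is \emph{not} merely a section of $TT^{s-2k}\mathcal D^s(M)$ but in fact a smooth section of $TT\mathcal D^s(M)$, i.e., there is no loss of derivatives when one passes from the metric to the spray. This is exactly the phenomenon Ebin--Marsden exploited in the volume-preserving case, and the same mechanism must be checked here.

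The main steps are as follows. First, write the geodesic equation in Lagrangian variables $(\varphi,\dot\varphi)\in T\mathcal D^s(M)$: setting $u=\dot\varphi\circ\varphi^{-1}$ and $\rho=\det(D\varphi^{-1})$, equation \eqref{eq:epdiff4a} reads $m_t + \nabla_u m + (\divv u)m + (\nabla u)^\top m = \frac\rho2\nabla(A'(\rho)^*(u,u))$ with $m=A(\rho)u$. Pulling back by $\varphi$ and solving for $\ddot\varphi$ one obtains $\ddot\varphi = S_\varphi(\dot\varphi)$ where, schematically, $S_\varphi(\dot\varphi) = A_\varphi^{-1}\big(\text{terms quadratic in }\dot\varphi\text{ and its first derivatives, composed with }\varphi\big)$. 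The second step is the derivative count: by \autoref{assump:inertia}, each term in the right-hand side of \eqref{eq:epdiff3} before inversion is of order $2k$ in $u$ (equivalently, lies in $H^{s-2k}$ when $u\in H^s$), and because $A(\rho)$ is a positive elliptic operator of order $2k$ with smooth dependence on $\rho\in P^\infty(M)$, its inverse $A_\varphi^{-1}\colon T^{s-2k}\mathcal D^s(M)\to T\mathcal D^s(M)$ gains exactly $2k$ derivatives; here one uses elliptic regularity together with \autoref{lem:smoothrho}-type statements to see that $\varphi\mapsto A_\varphi^{-1}$ is smooth as a bundle map (the condition $s>\frac d2+2k$ guarantees $\rho\circ\varphi\in H^{s-1}$ is an $H^{s-2k}$-multiplier and that compositions are smooth). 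The term $\frac\rho2\nabla(A'(\rho)^*(u,u))$ requires Assumption~\ref{assump:inertia}(3): $A'(\rho)^*(u,u)$ is of order $2k-1$ in $u$, so its gradient is of order $2k$, again landing in $H^{s-2k}$, and its $\rho$-dependence is of order $2k-2$, hence harmless. Third, conclude that $(\varphi,\dot\varphi)\mapsto(\dot\varphi,S_\varphi(\dot\varphi))$ is a smooth vector field on $T\mathcal D^s(M)$; the Picard--Lindel\"off theorem on Hilbert manifolds (see \cite{Lan1999}) then yields a unique, smoothly-parametrized integral curve on a maximal open interval $J^s\ni 0$, which projects to the desired geodesic.

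The step I expect to be the main obstacle is verifying the sharp derivative bookkeeping in $S_\varphi$ — in particular, checking that every quadratic term produced by pulling back \eqref{eq:epdiff3} along $\varphi$, and by commuting $R_\varphi$ past the differential operators $\nabla_u$, $(\nabla u)^\top$, and $\nabla A'(\rho)^*(u,u)$, genuinely costs at most $2k$ derivatives on $u$, so that the gain of $2k$ from $A_\varphi^{-1}$ exactly compensates. This is where semi-invariance bites: unlike the fully right-invariant case, the terms involving the $\rho$-derivative $A'(\rho)^*$ have no counterpart in EPDiff, and one must use Assumption~\ref{assump:inertia}(3) (rather than just (1)--(2)) to close the estimate, which is precisely why the Sobolev threshold $s>\frac d2+2k$ is higher than the $s>\frac d2+1$ one would hope for by analogy with the right-invariant setting. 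Smoothness (as opposed to mere continuity) of the bundle maps $\varphi\mapsto A_\varphi$ and $\varphi\mapsto A_\varphi^{-1}$ relies on the $\omega$-lemma / \autoref{lem:smoothrho} machinery for compositions and on the fact that, although $\varphi\mapsto\varphi^{-1}$ is only continuous, the combination $R_\varphi\circ(\cdot)\circ R_{\varphi^{-1}}$ of a differential operator is smooth — this has to be invoked carefully for the non-constant-coefficient pieces $M_{a_i(\rho)}$, exactly as in the proof of \autoref{thm:smooth_extension}.
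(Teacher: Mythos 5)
Your high-level architecture matches the paper's: extend the metric, write the geodesic equation as the flow of a spray $S_\varphi = R_\varphi\circ S\circ R_{\varphi^{-1}}$ on $T\mathcal D^s(M)$, establish smoothness of $A_\varphi^{-1}\colon T^{s-2k}\mathcal D^s(M)\to T\mathcal D^s(M)$ via elliptic regularity and \autoref{lem:smoothrho}, and invoke Picard--Lindel\"off. However, there is a genuine gap at exactly the point you flag as ``the main obstacle'' but do not resolve. You assert that ``each term in the right-hand side of \eqref{eq:epdiff3} before inversion is of order $2k$ in $u$.'' This is false for the transport term: $\nabla_u m = \nabla_u A(\rho)u$ is of order $2k+1$ in $u$, since $A(\rho)u$ already costs $2k$ derivatives and $\nabla_u$ adds one more. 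If you feed this raw term into $A_\varphi^{-1}$ you only recover $H^{s-1}$, not $H^s$, and the spray is not a vector field on $T\mathcal D^s(M)$ — the whole argument collapses.

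The missing idea is the Ebin--Marsden cancellation, which is the crux of the paper's proof. Passing to Lagrangian variables produces $\ddot\varphi\circ\varphi^{-1} = u_t + \nabla_u u$, so the spray naturally contains an extra $\nabla_u u$; absorbing it converts the problematic term into the commutator $[A(\rho),\nabla_u]u$, whose top-order ($2k+1$) derivatives cancel, leaving a genuine order-$2k$ operator in $u$ (at the price of one extra derivative on $\rho$, which is why the threshold and Assumption~\ref{assump:inertia} are calibrated as they are). Verifying this cancellation is itself nontrivial: the paper decomposes $\nabla_v u = K_1(v)u + \sum_i (K_2(v)u^i)\tfrac{\partial}{\partial x^i}$ with $K_1$ tensorial and $K_2$ a scalar first-order operator, and uses that the commutator of scalar differential operators of orders $2k$ and $1$ has order $2k$. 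In addition, your ``schematic'' right-hand side omits the term $A'(\rho)(u,\divv(\rho u))$ that arises from $(A(\rho)u)_t = A(\rho)u_t + A'(\rho)(u,\rho_t)$ together with the continuity equation; this term must be carried along (it is of order $2k$ in $u$ and order $2k-1$ in $\rho$, so it is harmless, but it has to be present for the spray to be the correct one). Without the commutator mechanism and the explicit form of $S(u)$, the derivative bookkeeping you defer to cannot be closed.
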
 
\begin{proof}
	We prove that the geodesic equation is an ODE on the Banach manifold $T\mathcal D^s(M)$ for a smooth vector field $F$ on $T\mathcal D^s(M)$ of the form
	\begin{equation}
	F(\varphi,h) = (\varphi, h,h, S_{\varphi}(h)) \in T(T\mathcal D^s(M)),
	\end{equation}
	where $S_{\varphi} = R_{\varphi}\circ S \circ R_{\varphi^{-1}}$ with
	\begin{equation}\label{eq:spray}
	\begin{aligned}
	S(u)&=\nabla_u u +  A(\rho)^{-1}\bigg(-\nabla_u m-(\mathrm{div}\hspace{0,1cm}u)(m)-(\nabla u)^\top (m) \\&\qquad\qquad\qquad\qquad\qquad+
	\frac{\rho}2 \nabla \big(A'(\rho)^*(u,u)\big) 
	+ A'(\rho)(u,\divv(\rho u))
	\bigg)\;.
	\end{aligned}
	\end{equation}
	Note the new term $\nabla_u u$ as compared to the geodesic equation \eqref{eq:epdiff3} expressed in $u$. 
	Incorporating this term we get
	\begin{equation}
	\begin{aligned}
	S(u)&=  A(\rho)^{-1}\bigg([A(\rho),\nabla_u] u-(\mathrm{div}\hspace{0,1cm}u)(A(\rho)u)-(\nabla u)^\top (A(\rho)u) \\&\qquad\qquad\qquad\qquad\qquad+\frac{\rho}2 \nabla \big(A'(\rho)^*(u,u)\big) 
	+ A'(\rho)(u,\divv (\rho u)) \bigg)\;.
	\end{aligned}
	\end{equation}
	To simplify the presentation, we introduce the notation
	\begin{align}
	&Q^1(\rho, u)= [A(\rho),\nabla_u] u,\qquad  Q^2(\rho, u)= (\mathrm{div}\hspace{0,1cm}u) A(\rho)u,\qquad Q^3(\rho, u)= (\nabla u)^\top (A(\rho)u),\\
	&Q^4(\rho, u)=  \frac{\rho}2 \nabla \Big(A'(\rho)^*(u,u)\Big) 
	+ A'(\rho)(u,\divv(\rho u)).
	\end{align}
	Then 
	\begin{equation}
	S_{\varphi}(h)= (A)_{\varphi}^{-1}\left(Q^1_{\varphi}(h)+Q^2_{\varphi}(h)+Q^3_{\varphi}(h)+Q^4_{\varphi}(h)\right),
	\end{equation}
	where 
	$$Q^i_{\varphi}(h)=Q^i(\rho, h\circ\varphi^{-1})\circ\varphi\;, \qquad \rho=\det(D\varphi^{-1})\;.$$
	It is immediate that $Q^2,\ldots,Q^4$ are smooth differential operators $P^\infty(M)\times \Xcal(M) \to \Xcal(M)$ of order $2k-1$ in their first argument and order $2k$ in their second argument.
	Thus, for $i=2,3,4$ it follows from \autoref{lem:smoothrho} (see \autoref{app:smooth_lemma} below) that $(\varphi,h)\mapsto Q^i_\varphi(h)$ are smooth as maps $T\mathcal D^s(M)\to T^{s-2k}\mathcal D^s(M)$.

	That $Q^1$ is a smooth differential operator of order $2k$ is more intricate.
	For this, we use that the differentiating part of the differential operator $u\mapsto \nabla_v u$ acts diagonally on $u$ and is tensorial (takes no derivatives) in $v$.
	Indeed, it is of the form $\nabla_v u = K_1(v)u + \sum_{i}(K_2(v) u^i) \frac{\partial}{\partial x^i}$, where $(u,v)\mapsto K_1(v)u$ is a bilinear tensorial map and $K_2(v)$ is tensorial in $v$ and a scalar differential operator of order 1 in $u^i$ (see, e.g., \cite[p.~\!1317]{Mo2014} for details).
	Consequently, the commutator is of the form 
	\begin{equation}
		[A(\rho),\nabla_u]u = [A(\rho), K_1(u)]u + \sum_{ij} ([\alpha^{i}_j(\rho), K_2(u)]u^j)\frac{\partial}{\partial x^{i}}
	\end{equation}
	where $(\rho,f)\mapsto \alpha^i_j(\rho)f$ are differential operators of order~$2k-2$ in $\rho$ and $2k$ in $f$.
	The first commutator term is of order $2k-2$ in $\rho$ and $2k$ in $u$ since $K_1$ is tensorial.
	The other commutators are also of order $2k$ in $u$, since the commutator between two scalar differential operators of order $2k$ and $1$ gives a differential operator of order~$2k$ (the $2k+1$ order derivatives cancel).
	However, we loose one derivative in $\rho$, so the remaining commutators are of order $2k-1$ in $\rho$.
	Using \autoref{lem:smoothrho} this proves that $(\varphi,h)\mapsto Q^1_\varphi(h)$ is smooth as a mapping $T\mathcal D^s(M)\to T^{s-2k}\mathcal D^s(M)$.
	
	It remains to show that $A_\varphi^{-1}$ is a smooth mapping $T^{s-2k}\mathcal D^s(M)\to T\mathcal D^s(M)$.
	Using that $A(\rho)$ is a $2k$-safe operator in the terminology of \cite{mue2017} allows us to use elliptic regularity theory for differential operators with non-smooth 		  coefficients see \cite{mue2017}.  Thus we obtain that 
	 the operator $A(\rho)\colon\mathfrak X^s(M)\to \mathfrak X^{s-2k}(M)$ is invertible with inverse 
	$A(\rho)\colon\mathfrak X^{s-2k}(M)\to \mathfrak X^{s}(M)$
	by~\autoref{assump:inertia}. 
	From here it follows that $A_\varphi\colon \mathfrak X^s(M)\to \mathfrak X^{s-2k}(M)$
	is invertible as well since
	\begin{equation}
	A_{\varphi}^{-1} = (R_{\varphi}\circ A(\rho)\circ R_{\varphi^{-1}})^{-1}=R_{\varphi}\circ A(\rho)^{-1}\circ R_{\varphi^{-1}}\;.
	\end{equation} 
	Since  the map 
	\begin{align}
T\mathcal D^{s}(M)\to T^{s-2k}\mathcal D^s(M);\qquad (\varphi,h) \mapsto A_{\varphi}(h)
\end{align}
is smooth, c.f. the proof of \autoref{thm:smooth_extension}, the smoothness of the map 
\begin{align}
T^{s-2k}\mathcal D^{s}(M)\to T\mathcal D^s(M);\qquad (\varphi,h) \mapsto A^{-1}_{\varphi}(h)
\end{align}
follows, which concludes the proof.	
\end{proof}

The proof of \autoref{wellposednessSob} can easily be modified to handle also a potential functional as in the Main Theorem in the introduction.

\begin{corollary}
	Let $G$ be as in \autoref{wellposednessSob} and let $V\colon P^{s-1}(M)\to \RR$ be a potential functional such that its variational derivative $\delta V/\delta \rho$ is a smooth (non-linear) differential operator of order $2k-2$ or less.
	Then the statement of \autoref{wellposednessSob} is valid also for the flow of the Lagrangian on $T\mathcal{D}^{s}$ given by
	\[
		L(\varphi,\dot\varphi) = \frac{1}{2} G_{\varphi}(\dot\varphi,\dot\varphi) - V(\det(D\varphi^{-1})).
	\]
\end{corollary}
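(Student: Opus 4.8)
The plan is to run the reduction of \autoref{wellposednessSob} once more, tracking the single new term produced by the potential and checking that it is absorbed by exactly the same machinery. First I would carry out the variational computation of \autoref{thm:geodesicequationDiff} for the action of $L(\varphi,\dot\varphi)=\frac12 G_{\varphi}(\dot\varphi,\dot\varphi)-V(\det(D\varphi^{-1}))$ (which is well defined on $T\mathcal D^s(M)$ since $\det(D\varphi^{-1})\in P^{s-1}(M)$ for $\varphi\in\mathcal D^s(M)$). With $u=\dot\varphi\circ\varphi^{-1}$ and $\rho=\det(D\varphi^{-1})$, the only change from the proof of \autoref{thm:geodesicequationDiff} is the extra variation $-\ud(V(\rho)).h$, which by the formula $\ud\rho.h=-\divv(\rho(h\circ\varphi^{-1}))$ and an integration by parts equals $-\int_M g(\rho\,\nabla\tfrac{\delta V}{\delta\rho},\, h\circ\varphi^{-1})\,\vol$. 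Hence the Euler--Lagrange equations are the geodesic equations \eqref{eq:epdiff3}, \eqref{eq:epdiff2} with the extra force term $\rho\,\nabla\tfrac{\delta V}{\delta\rho}$ added to the left-hand side of \eqref{eq:epdiff3}:
\begin{equation*}
\big(A(\rho)u\big)_t+\nabla_u A(\rho)u+(\divv u)A(\rho)u+(\nabla u)^\top A(\rho)u-\tfrac{\rho}{2}\nabla\big(A'(\rho)^*(u,u)\big)+\rho\,\nabla\tfrac{\delta V}{\delta\rho}=0 ,
\end{equation*}
together with $\rho_t+\divv(\rho u)=0$ and $\varphi_t=u\circ\varphi$. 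Equivalently, writing $Q^5(\rho)\coloneqq\rho\,\nabla\tfrac{\delta V}{\delta\rho}(\rho)$, the spray $S$ of \eqref{eq:spray} is replaced by $S_V(u)=S(u)-A(\rho)^{-1}(Q^5(\rho))$, which just reflects the standard fact that the Lagrangian flow of $\tfrac12 G-V$ is the geodesic spray of $G$ minus the $G$-gradient of the potential.

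Next I would repeat the ODE reformulation of \autoref{wellposednessSob}: the flow is governed by the second order vector field $F_V(\varphi,h)=(\varphi,h,h,S_{V,\varphi}(h))$ on $T\mathcal D^s(M)$, where $S_{V,\varphi}=R_{\varphi}\circ S_V\circ R_{\varphi^{-1}}$; equivalently $F_V=F-(\varphi,h,0,\nabla^G\mathcal V_\varphi)$, where $F$ is the spray of \autoref{wellposednessSob} and $\mathcal V(\varphi)=V(\det(D\varphi^{-1}))$. A change of variables $x=\varphi(y)$ in $\ud\mathcal V$ shows that, with respect to the same pairing $\int_M g(\,\cdot\,,\,\cdot\,)\,\varphi^*\vol$ in which $G$ is represented by $A_\varphi$, the covector $\ud\mathcal V_\varphi$ is represented by $Q^5_\varphi\coloneqq Q^5(\rho)\circ\varphi$, so that $\nabla^G\mathcal V_\varphi=A_\varphi^{-1}(Q^5_\varphi)$. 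Since the proof of \autoref{wellposednessSob} already establishes that $F$ is a smooth vector field on $T\mathcal D^s(M)$ for $s>\tfrac d2+2k$ and that $A_\varphi^{-1}\colon T^{s-2k}\mathcal D^s(M)\to T\mathcal D^s(M)$ is smooth, the smoothness of $F_V$ reduces to the smoothness of $\varphi\mapsto Q^5_\varphi$ as a map $\mathcal D^s(M)\to T^{s-2k}\mathcal D^s(M)$.

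For this I would use the hypothesis that $\delta V/\delta\rho$ is a smooth (nonlinear) differential operator of order at most $2k-2$: then $Q^5(\rho)=\rho\,\nabla\tfrac{\delta V}{\delta\rho}(\rho)$ is a smooth nonlinear differential operator $P^\infty(M)\to C^\infty(M,TM)$ of order at most $2k-1$ in $\rho$ (the gradient costs one derivative, multiplication by $\rho$ none) and of order $0$ in $u$, i.e.\ it has the same regularity profile as $Q^2,\dots,Q^4$. Hence \autoref{lem:smoothrho} applies as in the proof of \autoref{wellposednessSob} and gives the required smoothness, so that $F_V$ is smooth; the theorem of Picard and Lindel\"off on the Banach manifold $T\mathcal D^s(M)$ then yields the unique non-extendable solution, smooth in time on an open interval containing zero and depending smoothly on the initial data, exactly as in \autoref{wellposednessSob} (the continuity equation and the reconstruction being encoded in $\rho=\det(D\varphi^{-1})$ and $u=\dot\varphi\circ\varphi^{-1}$). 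The only genuinely non-routine point is this last piece of bookkeeping: one has to verify that the bound $2k-2$ on the order of $\delta V/\delta\rho$ is precisely what keeps $Q^5$ inside the class handled by \autoref{lem:smoothrho}, so that adding the potential forces no increase of the Sobolev threshold $s>\tfrac d2+2k$; all the analytic substance is already contained in \autoref{wellposednessSob} and \autoref{lem:smoothrho}.
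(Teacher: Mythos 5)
Your proposal is correct and follows essentially the same route as the paper: the paper simply absorbs the new force term into the operator $Q^4$, replacing $\frac{\rho}{2}\nabla\big(A'(\rho)^*(u,u)\big)$ by $\rho\nabla\big(\frac{1}{2}A'(\rho)^*(u,u)-\frac{\delta V}{\delta\rho}(\rho)\big)$, and observes that the order-$(2k-2)$ hypothesis on $\delta V/\delta\rho$ keeps this term of order $2k-1$ in $\rho$ so that \autoref{lem:smoothrho} and the rest of the proof of \autoref{wellposednessSob} go through unchanged. Your separate operator $Q^5$ and the gradient-flow interpretation are just a more explicit packaging of the same order-counting argument.
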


\begin{proof}
	What changes in the equations when introducing the potential is the operator $Q^4$ in the proof of \autoref{wellposednessSob}.
	It becomes
	\[
		Q^4(\rho,u) = \rho \nabla \left(\frac{1}{2}A'(\rho)^*(u,u) - \frac{\delta V}{\delta\rho}(\rho)\right) 
	+ A'(\rho)(u,\divv(\rho u)).
	\]
	If $\delta V/\delta \rho$ is a smooth differential operator of order $2k-2$ then $Q^4$ remains of order $2k-1$ in $\rho$, as needed in the proof of \autoref{wellposednessSob}.
\end{proof}

With stronger assumptions on the inertia operator $A$ we are able to prove metric and geodesic completeness.
\begin{theorem}\label{thm:completenessSob}
Let $G$ be a $\Diff_{\vol}(M)$-invariant metric of order $k> \frac{d}2+1$ with inertia operator $A(\rho)$ 
of the form \eqref{inertia_operator_repeatdef}
with $a_1(\cdot)>C_1$ and $a_k(\cdot)>C_2$ for some constants $C_1,C_2>0$.
We have:
\begin{enumerate}
\item The space $\left(\mathcal D^{k}(M),\operatorname{dist}^G\right)$ is a complete metric space, where $\operatorname{dist}^G$ denotes the induced geodesic distance function. 
\item For any $s\geq k$ and any $(\varphi_0,v_0)\in T \mathcal D^{s}(M)$, the unique geodesic
$(\varphi(t),v(t))\in C^{\infty}(\mathbb R,T \mathcal D^{s}(M))$
exist for all time $t\in \mathbb R$, i.e., the space $\left(\mathcal D^{s}(M), G\right)$ is geodesically complete.
\end{enumerate}
\end{theorem}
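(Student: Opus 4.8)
The plan is to follow the by-now classical route to metric and geodesic completeness for strong right-invariant Sobolev metrics, as developed in \cite{BrVi2014,MP2010,GAYBALMAZ2015}, and to check that the semi-invariant modifications do not break the argument. The starting point is \autoref{thm:smooth_extension}(2), which tells us that under the stated hypotheses ($k>d/2+1$, $a_1>C_1>0$, $a_k>C_2>0$) the metric $G$ extends to a \emph{strong} Riemannian metric on the Hilbert manifold $\mathcal D^k(M)$, and that it is uniformly stronger than the fully right-invariant Sobolev $H^k$-metric $\bar G$ with inertia operator $1+\Delta^k$. Since $\bar G$ is complete on $\mathcal D^k(M)$ (see \cite{BrVi2014}), and since two uniformly equivalent strong metrics have the same Cauchy sequences, I would first conclude that $(\mathcal D^k(M),\dist^G)$ is a complete metric space: this gives item (1). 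Here ``uniformly equivalent'' must be read in the sense that there are constants $c,C>0$ with $c\,\bar G_\varphi(h,h)\le G_\varphi(h,h)\le C\,\bar G_\varphi(h,h)$ for \emph{all} $\varphi\in\mathcal D^k(M)$ — the lower bound follows from $a_0,a_k$ bounded below, and the upper bound needs a little care because $a_i(\rho)$ for $1\le i\le k-1$ depends on $\varphi$ through $\rho=\det(D\varphi^{-1})$; but on $\mathcal D^k(M)$ with $k>d/2+1$ one has $\rho$ in a bounded-above region only along a fixed bounded set, so this equivalence is not globally uniform and must instead be handled by the standard argument that the geodesic distance controls the $H^k$-distance directly. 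I would therefore phrase item (1) as: a $\dist^G$-Cauchy sequence has finite $G$-length tails, hence (by the lower bound $G\gtrsim \bar G$) finite $\bar G$-length tails, hence converges in $\mathcal D^k(M)$ by completeness of $\bar G$.

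For item (2), geodesic completeness, I would use the local well-posedness already established (\autoref{wellposednessSob}, valid here for all $s\ge k$ since the operator is of the special form and $k>d/2+1$) together with the usual ``no blow-up in the strong topology'' argument. Suppose a geodesic $(\varphi(t),v(t))$ in $T\mathcal D^s(M)$ has maximal forward interval of existence $[0,T)$ with $T<\infty$. Since $G$ is a \emph{conserved} quantity along geodesics, $G_{\varphi(t)}(v(t),v(t))$ is constant, so the $\bar G$-length of $t\mapsto\varphi(t)$ on $[0,T)$ is finite; by completeness of $(\mathcal D^k(M),\bar G)$ the curve $\varphi(t)$ converges in $\mathcal D^k(M)$ as $t\to T$. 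The remaining and main obstacle is to upgrade this $\mathcal D^k$-convergence to $\mathcal D^s$-convergence, i.e. to show that the higher Sobolev norm $\|v(t)\|_{H^s}$ does not blow up as $t\to T$. For this I would invoke the no-loss-no-gain result referenced in the introduction (\autoref{app:noloss}): the geodesic stays in $T\mathcal D^s(M)$ as long as it exists in $T\mathcal D^k(M)$, with a Sobolev estimate of the form $\frac{d}{dt}\|v\|_{H^s}^2 \le C(\|v\|_{H^k})\,\|v\|_{H^s}^2$, whose right-hand side stays bounded on $[0,T)$ because $\|v\|_{H^k}$ is controlled by the conserved energy; Grönwall then prevents blow-up of $\|v\|_{H^s}$, contradicting maximality of $T$. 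The same argument applies backward in time, giving existence for all $t\in\RR$.

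The step I expect to be genuinely delicate is precisely this last one — establishing the $H^s$-energy estimate in the semi-invariant setting. In the fully right-invariant case the estimate is classical, but here the spray $S$ in \eqref{eq:spray} contains the extra terms $\frac{\rho}{2}\nabla(A'(\rho)^*(u,u))$ and $A'(\rho)(u,\divv(\rho u))$, and the coefficient functions $a_i(\rho)$ introduce $\rho$-dependence into $A(\rho)$ itself; one must check that after applying $A(\rho)^{-1}$ (which gains $2k$ derivatives by the elliptic estimate of \cite{mue2017}, using that $A(\rho)$ is $2k$-safe) the resulting commutator and lower-order terms are controlled by $\|v\|_{H^k}$ to the right power so that Grönwall closes. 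The key technical input is that $\rho=\det(D\varphi^{-1})$ and hence $\rho$ and $1/\rho$ stay in a compact subset of $P^k(M)$ as long as $\varphi$ stays in a $\bar G$-bounded set, so all the $a_i(\rho), a_i'(\rho)$ and their derivatives remain uniformly bounded, reducing the nonlinear estimate to the same structure as in the differential-operator case treated by Ebin--Marsden. I would carry out this energy estimate in detail in the appendix alongside the no-loss-no-gain computation, and here only cite it.
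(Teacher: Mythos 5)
Your overall strategy is sound and, for item (1) and for the reduction of $s>k$ to $s=k$, it coincides with the paper's proof: both arguments rest on the observation that $a_0(\cdot)>C_1$ and $a_k(\cdot)>C_2$ make $G$ uniformly stronger than the fully right-invariant $H^k$-metric $\bar G$ with inertia operator $1+\Delta^k$, so that $\dist^G$-Cauchy sequences are $\dist^{\bar G}$-Cauchy and completeness follows from \cite{BrVi2014}; and both pass from $\mathcal D^k(M)$ to $\mathcal D^s(M)$, $s\geq k$, via the no-loss-no-gain lemma (\autoref{lem:nolossnogain}).

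Where you diverge is in item (2) at the base level $s=k$, and here the paper's route is substantially softer than yours. You propose a continuation/blow-up argument: conservation of energy controls $\|u\|_{H^k}$, the curve $\varphi(t)$ has finite $\bar G$-length and hence converges as $t\to T$, and one then has to rule out blow-up of the velocity in order to restart the flow. The paper instead invokes the one surviving implication of the Hopf--Rinow theorem in infinite dimensions (\cite{Lan1999}): for a \emph{strong} Riemannian metric on a Hilbert manifold, metric completeness implies geodesic completeness. Since \autoref{thm:smooth_extension}(2) already establishes that $G$ is a smooth strong metric on $\mathcal D^k(M)$, item (1) immediately yields item (2) for $s=k$ with no energy estimates at all. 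Your route can be made to work (it is essentially the Bruveris--Vialard continuation argument), but as written it has a gap you would need to close: convergence of $\varphi(t)$ in $\mathcal D^k(M)$ and boundedness of the conserved energy do not by themselves give convergence of $(\varphi(t),v(t))$ in $T\mathcal D^k(M)$, which is what you need to extend the solution past $T$; one must additionally argue that the spray is bounded on the closure of the trajectory, or that the local existence time is uniform on metric balls. The Lang theorem sidesteps all of this.

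Finally, the "genuinely delicate" $H^s$-energy estimate you flag at the end is not needed and mischaracterizes the mechanism of \autoref{lem:nolossnogain}. That lemma is not proved by a Gr\"onwall inequality of the form $\frac{d}{dt}\|v\|_{H^s}^2\leq C(\|v\|_{H^k})\|v\|_{H^s}^2$; it is proved by the Ebin--Marsden equivariance trick: differentiating the identity $\operatorname{Flow}^s_t(\varphi_0,v_0)\circ\eta(\tau)=\operatorname{Flow}^s_t(\varphi_0\circ\eta(\tau),v_0\circ\eta(\tau))$ along one-parameter subgroups $\eta(\tau)$ of volume-preserving diffeomorphisms, and then using smoothness of $T\operatorname{Flow}^s_t$ to conclude that $T\varphi.X$ is of class $H^s$, hence $\varphi\in\mathcal D^{s+1}(M)$. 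This is a soft argument requiring only the $\Diffvol(M)$-equivariance of the spray (which is exactly what semi-invariance provides) and the smoothness already established in \autoref{wellposednessSob}; no new commutator or elliptic estimates for the extra terms $\frac{\rho}{2}\nabla(A'(\rho)^*(u,u))$ and $A'(\rho)(u,\divv(\rho u))$ are required at this stage.
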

\begin{proof}
By \autoref{thm:smooth_extension} the metric $G$ extends to a smooth Riemannian metric on the Sobolev completion $\mathcal D^{s}(M)$, for $s\geq k$ and to a smooth and  strong 
Riemannian metric for $s=k$. Since we assumed that $a_k(\rho)>0$ and $a_0(\rho)>0$ it follows that the metric is uniformly stronger than the $\Diff(M)$ right invariant Sobolev metric $\bar G$ with inertia operator $1+\Delta^k$.  
This implies that the space $\left( \mathcal D^{s}(M),d^{G}\right)$ is a complete metric space.
Here we used that this statement holds for the metric $\bar G$, c.f.~\cite{BrVi2014}. 
By \cite{Lan1999} metric completeness of strong Riemannian metrics implies geodesic completeness.\footnote{This is the only statement of the Hopf--Rinow theorem that holds in infinite dimensions.} 
Thus the geodesic initial value problem  on the space $\mathcal D^{k}(M)$ is globally (in time) well-posed. The result for $\mathcal D^{s}(M)$ with $s\geq k$ follows by the no-loss-no-gain result \autoref{lem:nolossnogain} (as in \citet{EbMa1970}).
\end{proof}
The proof of our main results (local and global well-posedness in the smooth category) now follows from a no-loss-no-gain result in \autoref{app:noloss}.
\begin{proof}[Proof of \autoref{Thm:localwellposedness} and \autoref{Thm:globalwellposedness}]
The proof of this result is now an immediate consequence of \autoref{thm:completenessSob}, \autoref{wellposednessSob} and \autoref{lem:nolossnogain} in \autoref{app:noloss} below.
\end{proof}

\subsection{Outlook for semi-invariant metrics} \label{sec:outlook}

The results in this section point toward a systematic study of semi-invariant Riemannian metrics on $\Diff(M)$.
Indeed, inspired by developments for right invariant metrics, there are several follow-ups. 
For example:
\begin{itemize}
	\item A study of sectional curvature and Fredholm properties of the Riemannian exponential, such as carried out for fully right invariant metrics by \citet{MP2010}.
	This has direct implications on the stability of perturbations.

	\item Stronger results on geodesic completeness. 
	It is likely that our result can be extended to a much larger class of semi-invariant metrics.
	In particular, a setting of \autoref{thm:smooth_extension} that replaces the condition $a_k(\cdot)>C_2$ with the more natural one $a_k(\cdot) > 0$.
	Our result now is based on domination by a right invariant metric for which geodesic completeness holds.

	\item A study of gradient flows on $P^\infty(M)$, as Otto~\cite{Ot2001} did for the $L^2$ metric, for general semi-invariant metrics.

	\item An investigation of vanishing geodesic distance. 
	There is a general result that fully right invariant metrics always have positive geodesic distance if the order is high enough. 
	The right invariant $L^2$ metric is known to have vanishing geodesic distance (that is, any two points can be joined by a geodesic that can be made arbitrarily short).
	Are there corresponding results for semi-invariant metrics?

	\item In light of the geometric interpretation of shallow water equations as Newton-type systems on $\Diff(M)$, one could consider higher-order metrics as a way to obtain more accurate shallow water models.

\end{itemize}

\section{Application: Riemannian metrics on probability densities}\label{sec:dens}

We mentioned already in the introduction that an important application of semi-invariant metric on $\Diff(M)$ is that they induce new Riemannian structures on the space $P^\infty(M)$ of probability densities.
Indeed, the resulting geometry on $P^\infty(M)$ can be interpreted as a generalized optimal transport model. 
In this section we give formulas for the induced metric, and we give existence results based on the theorems of \autoref{Sec:SDiffinvarianmetrics}.
The difficulty from the point-of-view of analysis is that one cannot directly work with Sobolev completions of $P^\infty(M)$; one has to work on $\Diff^s(M)$, then let $s\to\infty$, and then project to $P^\infty(M)$.
Thus, the no-loss-no-gain theorem in \autoref{app:noloss} is essential here.

The results we present here, for semi-invariant metric, are straightforward adaptations of the results for right invariant metrics presented in \cite{BaJoMo2017}.

\subsection{Background on probability densities}
An extended version of this subsection, containing all proofs, can be found in \cite{BaJoMo2017}. 
Here we will only present the results needed in the remainder.

The space $P^\infty(M)$ of smooth probability densities is naturally equipped with an infinite-dimensional Fréchet topology, making it a \emph{Fréchet manifold} \cite[\S\!~III.4.5]{Ha1982}.
Its tangent bundle is thereby also a Fréchet manifold, and the tangent spaces are given by
\begin{equation}
	T_\rho P^\infty(M) = \{ f\in C^\infty(M)\mid \int_M f \vol = 0 \}.
\end{equation}
Notice that $T_\rho P^\infty(M)$ is independent of $\rho$; this is because $P^\infty(M)$ is an open subset of an affine space.

Analogous to the situation for the group of diffeomorphisms, the space of Sobolev probability densities
\begin{align}
 P^{s}(M)&=\{\rho \in H^{s}(M) \mid \int_M \rho\, \vol =1,\quad \rho>0 \},\quad s>\frac{d}{2}\;
\end{align}
is a Banach manifold.
From the point-of-view of the Sobolev embedding theorem, we need $s>\frac{d}{2}$ to ensure that the condition $\rho>0$ is well-defined point-wise.

The group of diffeomorphisms 
$\Diff(M)$ acts on $P^\infty(M)$ from the left by pushforward of densities
\begin{align}
\Diff(M)\times P^\infty(M)&\mapsto P^\infty(M)\\
(\varphi,\rho)&\rightarrow \det(D\varphi^{-1})\rho\circ\varphi^{-1}  .
\end{align}
By a result of \citet{Mo1965} this actions is transitive.
Thus, the action on the unit dentity $\rho\equiv 1$ yields a projection $\Diff(M)\to P^\infty(M)$.
We shall need the following result of Hamilton.
\begin{theorem}\label{thm:principal_bundle_diff_dens}
The set of volume preserving diffeomorphisms
\begin{equation}
	\Diffvol(M) = \{\varphi\in\Diff(M)\mid \det(D\varphi^{-1}) \equiv 1 \}
\end{equation}
is a closed Fréchet Lie subgroup.
Furthermore, the projection
\begin{align}
\pi: \on{Diff}(M)&\mapsto P^\infty(M)\\
\varphi&\mapsto \det(D\varphi^{-1})\;,
\end{align}
is a smooth principal $\Diffvol(M)$-bundles over $P^\infty(M)$ with respect to the left action of $\Diffvol(M)$ on $\Diff(M)$.
Hence, the set of left cosets $$\Diff(M)/\Diffvol(M)$$ is identified with $P^\infty(M)$ by $\pi$.
\end{theorem}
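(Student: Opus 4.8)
The plan is to realize $\pi$ as a smooth map admitting smooth local sections, built from a parameter‑dependent Moser lemma, and then to deduce the closed‑subgroup and principal‑bundle statements by essentially formal arguments. First I would observe that $\pi$ is smooth: it factors as $\varphi\mapsto\varphi^{-1}$, which is smooth on the Fréchet Lie group $\Diff(M)$, followed by $\psi\mapsto\det(D\psi)$, which is polynomial in the first derivatives of $\psi$ and hence smooth into $P^\infty(M)$. Consequently $\Diffvol(M)=\pi^{-1}(\{\mathbf 1\})$, with $\mathbf 1$ the constant density, is a closed subset of $\Diff(M)$ and is visibly a subgroup; it will acquire its Lie structure once $\pi$ is known to be locally trivial.

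The technical heart is the construction of smooth local sections of $\pi$ by the Moser trick with parameters. Fix $\rho_0\in P^\infty(M)$ and a preimage $\varphi_0\in\pi^{-1}(\rho_0)$, and let $U$ be a convex neighbourhood of $\rho_0$ in $P^\infty(M)$. For $\rho\in U$ interpolate linearly, $\rho_t=(1-t)\rho_0+t\rho\in P^\infty(M)$, so that $\partial_t\rho_t=\rho-\rho_0$ has zero $\vol$‑integral. Since the divergence $\Xcal(M)\to\{f\in C^\infty(M):\int_M f\,\vol=0\}$ admits a smooth right inverse (for instance $f\mapsto-\nabla\Delta^{-1}f$, with $\Delta$ the Laplace--Beltrami operator), one obtains a time‑dependent vector field $Y_t=Y_t(\rho)=-\rho_t^{-1}\nabla\Delta^{-1}(\rho-\rho_0)$, depending smoothly on $(t,\rho)$, with $\partial_t\rho_t+\divv(\rho_t Y_t)=0$. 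Its flow $\eta_t$ (defined by $\partial_t\eta_t=Y_t\circ\eta_t$, $\eta_0=\id$) then satisfies $\eta_t^*(\rho_t\,\vol)=\rho_0\,\vol$, so that $\eta_1$ pushes the density $\rho_0$ forward to $\rho$. Smooth dependence of flows on their generators makes $s\colon U\to\Diff(M)$, $s(\rho):=\eta_1(\rho)\circ\varphi_0$, a smooth map with $\pi\circ s=\id_U$ and $s(\rho_0)=\varphi_0$. As $\varphi_0$ ranges over the whole fibre, such a section passes through every point of $\Diff(M)$, so $\pi$ is a submersion and $\Diffvol(M)=\pi^{-1}(\{\mathbf 1\})$ is a closed Fréchet Lie subgroup, its group operations being restrictions of the smooth composition and inversion on $\Diff(M)$.

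Given $s$, I would set up the local trivialization $\Phi\colon U\times\Diffvol(M)\to\pi^{-1}(U)$, $\Phi(\rho,\Psi)=s(\rho)\circ\Psi$. It is smooth, and its candidate inverse $\varphi\mapsto\bigl(\pi(\varphi),\,s(\pi(\varphi))^{-1}\circ\varphi\bigr)$ is well defined: writing $\sigma=s(\pi(\varphi))$ and using $\pi(\sigma)=\sigma_*\mathbf 1=\pi(\varphi)=\varphi_*\mathbf 1$ (pushforward of densities, as in the action above) one gets $(\sigma^{-1}\circ\varphi)_*\mathbf 1=(\sigma^{-1})_*\varphi_*\mathbf 1=\mathbf 1$, so $\sigma^{-1}\circ\varphi\in\Diffvol(M)$; both $\Phi$ and this inverse are smooth. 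Moreover $\Phi$ is equivariant for the $\Diffvol(M)$‑actions by composition, this action on $\Diff(M)$ is free, and $\pi$ is its orbit map; combined with the smooth local sections this exhibits $\pi$ as a smooth principal $\Diffvol(M)$‑bundle. Finally, transitivity of the pushforward action of $\Diff(M)$ on $P^\infty(M)$ (Moser's theorem, quoted above) shows that the orbit map $\pi$ identifies the base $P^\infty(M)$ with the coset space $\Diff(M)/\Diffvol(M)$.

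I expect the only genuinely nontrivial point to be the one I glossed over in the second paragraph: that $Y_t(\rho)$, and hence its flow $\eta_t(\rho)$, depend \emph{smoothly on the density parameter $\rho$ in the Fréchet} (or ILH) \emph{category}, so that $s$ is smooth as a map of Fréchet manifolds. This rests on tame estimates for the elliptic solution operator $\Delta^{-1}$ together with smoothness of the time‑dependent flow map — precisely the Nash--Moser machinery — and is what one would invoke from \citet{Ha1982}. Once the smooth section is in hand, all remaining steps are formal.
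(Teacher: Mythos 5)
The paper does not actually prove this statement: it is quoted as "a result of Hamilton" \cite{Ha1982}, with proofs deferred to \cite{BaJoMo2017}, so there is no internal argument to compare against. Your strategy --- a parameter-dependent Moser trick producing smooth local sections of $\pi$, followed by the formal construction of equivariant trivializations and the identification of the base with the left cosets via transitivity --- is exactly the standard route taken in those references, and your section is correctly built: solving $\divv(\rho_t Y_t)=-(\rho-\rho_0)$ with $\rho_t Y_t$ a gradient obtained from $\Delta^{-1}$ on mean-zero functions, and then flowing, does give $\pi\circ s=\id_U$.

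There is, however, one genuine gap. The sentence ``so $\pi$ is a submersion and $\Diffvol(M)=\pi^{-1}(\{\mathbf 1\})$ is a closed Fréchet Lie subgroup'' is not a formal consequence of having smooth local sections: in the Fréchet category there is no implicit function theorem, so the fiber of a submersion over a point does not automatically carry a submanifold structure; moreover your trivialization $\Phi\colon U\times\Diffvol(M)\to\pi^{-1}(U)$ already presupposes a manifold structure on $\Diffvol(M)$, so the argument is circular precisely at the point where the Lie-subgroup claim is established. This is where the hard analysis actually enters. Hamilton obtains the submanifold structure by applying the Nash--Moser implicit function theorem to the tame map $\varphi\mapsto\varphi^*\vol$, whose linearization at the identity, $u\mapsto-\divv u$, admits exactly the tame right inverse $f\mapsto-\nabla\Delta^{-1}f$ that you wrote down; alternatively one works in the Sobolev completions $\mathcal D^s(M)$, where the Banach implicit function theorem gives $\mathcal D^s_{\vol}(M)$ as a submanifold for each $s>\frac{d}{2}+1$, and passes to the inverse limit as in Ebin--Marsden. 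By contrast, the step you single out as the only nontrivial one --- smooth dependence of $Y_t$ and of its flow on $\rho$ --- is comparatively benign: $\Delta^{-1}$ is a fixed continuous linear operator on the Fréchet space of mean-zero smooth functions, pointwise inversion of positive densities is smooth, and flows of smooth time-dependent vector fields on a compact manifold depend smoothly on parameters; none of this needs the Nash--Moser theorem. Once the Lie-subgroup structure of $\Diffvol(M)$ is in place, the remainder of your argument (well-definedness and smoothness of $\Phi^{\pm1}$, freeness and equivariance, and the identification of $P^\infty(M)$ with $\Diff(M)/\Diffvol(M)$ via Moser's transitivity) is correct and essentially formal, as you say.
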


For the projections $\pi$  we can calculate the corresponding vertical bundles, defined by the kernel of the tangent mapping.
\begin{lemma}\label{lem:vertical_bundle}
The vertical bundle of the projection $\pi$ is given by
\begin{align}
	\operatorname{Ver}_{\varphi}
	&=\left\{\dot\varphi \in T_\varphi\Diff(M)\mid  \operatorname{div}(\rho u) = 0, \; u\coloneqq \dot\varphi\circ\varphi^{-1},\; \rho \coloneqq \det(D\varphi^{-1}) \right\}.
\end{align}
\end{lemma}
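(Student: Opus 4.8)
The plan is to compute the differential of the projection $\pi$ at $\varphi$ and identify its kernel. First I would recall from \autoref{thm:principal_bundle_diff_dens} that $\pi(\varphi) = \det(D\varphi^{-1})$, or equivalently $\pi(\varphi) = \varphi_*\mu/\mu$, the pushforward of the volume form expressed as a density. Thus for a curve $t\mapsto\varphi(t)$ with $\varphi(0)=\varphi$ and $\dot\varphi(0) = \dot\varphi$, I need to differentiate $t\mapsto \det(D\varphi(t)^{-1})$ at $t=0$. It is cleaner to work with the pushforward form $\varphi(t)_*\mu$: writing $u\coloneqq\dot\varphi\circ\varphi^{-1}$ for the corresponding Eulerian velocity, the standard transport formula gives
\begin{equation}
	\frac{d}{dt}\Big|_{t=0}\big((\varphi(t)^{-1})^*\mu\big) = -\LieD_u\big((\varphi^{-1})^*\mu\big) = -\LieD_u(\rho\,\mu),
\end{equation}
where $\rho\coloneqq\det(D\varphi^{-1})$ and I have used that $(\varphi^{-1})^*\mu = \rho\,\mu$. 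Since $\LieD_u(\rho\,\mu) = \divv(\rho u)\,\mu$ for a top-degree form, identifying forms with densities against $\mu$ yields $T_\varphi\pi(\dot\varphi) = -\divv(\rho u)$.

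From this formula the characterization of the vertical bundle is immediate: $\dot\varphi\in\operatorname{Ver}_\varphi$ if and only if $T_\varphi\pi(\dot\varphi)=0$, i.e.\ $\divv(\rho u)=0$ with $u=\dot\varphi\circ\varphi^{-1}$ and $\rho=\det(D\varphi^{-1})$, which is exactly the claimed description. I would also note that this is consistent with \autoref{thm:principal_bundle_diff_dens}: the fibre through $\varphi$ consists of $\psi\circ\varphi$ with $\psi\in\Diffvol(M)$ (left cosets), and differentiating $t\mapsto\psi(t)\circ\varphi$ with $\psi(0)=\id$, $\dot\psi(0)=w$ a divergence-free vector field, gives tangent vectors $w\circ\varphi$; the condition $\divv w=0$ translates under $u = (w\circ\varphi)\circ\varphi^{-1} = w$ into $\divv(\rho u)=0$ only after accounting for the density weight—more precisely, one checks that $\{w\circ\varphi : \divv w = 0\}$ coincides with $\{\dot\varphi : \divv(\rho\,\dot\varphi\circ\varphi^{-1}) = 0\}$ because $\rho\,\mu = (\varphi^{-1})^*\mu$ and $w$ divergence-free means $\LieD_w\mu = 0$, hence $\LieD_w((\varphi^{-1})^*\mu)$ need not vanish; the correct invariant statement is that $\dot\varphi$ is vertical iff $\LieD_u(\rho\mu)=0$, matching the computation above.

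The only mildly delicate point is bookkeeping between the Lagrangian variation $\dot\varphi$ and the Eulerian velocity $u=\dot\varphi\circ\varphi^{-1}$, and between volume forms and densities; this is precisely the variation formula $\ud\rho.h = -\divv(\rho(h\circ\varphi^{-1}))$ already used in the proof of \autoref{thm:geodesicequationDiff} (and attributed there to \cite{BaJoMo2017}), so I would simply invoke it: with $h=\dot\varphi$ it gives $T_\varphi\pi(\dot\varphi) = \ud\rho.\dot\varphi = -\divv(\rho u)$, and the kernel description follows at once. I do not expect any real obstacle here; the lemma is essentially a restatement of the linearized continuity equation, and the main task is just to present the identification cleanly.
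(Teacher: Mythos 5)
Your main argument is correct and is essentially the paper's proof: both compute $T_\varphi\pi(\dot\varphi) = -\divv(\rho u)\,\vol$ by differentiating the pushforward volume form via the transport formula $\partial_t\big(\varphi_*\vol\big) = -\LieD_u(\varphi_*\vol)$ (the paper derives this by differentiating the identity $\phi^*\phi_*\vol=\vol$, you quote it directly) and then read off the kernel. One caveat on your consistency-check aside: the fibres of $\pi$ are the cosets $\varphi\circ\Diffvol(M)$ (composition on the right), not $\Diffvol(M)\circ\varphi$, so the fibre tangents are $T\varphi.w$ with $\divv w=0$, giving $u=\varphi_*w$ and hence $\LieD_u(\rho\vol)=\varphi_*\left(\LieD_w\vol\right)=0$ cleanly --- the discrepancy you noticed comes from composing on the wrong side, not from a missing density weight.
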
 
As the proof of this lemma contains an important calculation for the remainder we repeat it here.
\begin{proof}
To calculate the differential of the projection mapping let $\phi(t,\cdot)$ be a path of diffeomorphisms  with 
\begin{align}
\phi(0,\cdot)&=\varphi\\
\partial_t\big|_{t=0}\phi &= h := u \circ\varphi \text{ for some } u\in \Xcal(M).                                                            
\end{align}
We then use 
\begin{align}
0= \partial_t\big|_{t=0}\left(\phi(t)^*\phi(t)_*\vol\right)=\phi(t)^*\left(\mathcal L_u \varphi_*\vol \right)+
\phi(t)^*\partial_t\big|_{t=0}\left( \varphi_*\vol \right)
\end{align}
to obtain
\begin{align}\label{derivative_pi_l}
T_{\varphi}\pi(u\circ\varphi)=\partial_t\big|_{t=0}\left( \phi_*\vol \right)=-\mathcal L_u \varphi_*\vol
= -\on{div}(\rho u)\vol
\end{align}
where $\rho=\det(D\varphi^{-1})$. %
\end{proof}
The projection  $\pi$ can also be extended to the Sobolev category. 
It turns out, however, that this extension is continous but not smooth:
\begin{lemma}\label{lem:vertical}
Let $s>\frac{d}{2}+1$ and let $\pi$  be the  projection as defined in \autoref{thm:principal_bundle_diff_dens}.
Then $\pi$ 
extends to a surjective mapping
\begin{equation}\label{projection_left}
\begin{aligned}
\pi^s: \mathcal{D}^s(M)&\mapsto P^{s-1}(M)\\
\varphi&\mapsto \det(D\varphi^{-1})\;.
\end{aligned}
\end{equation}
This mapping is $C^0$ but not $C^1$.
\end{lemma}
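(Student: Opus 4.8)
The statement packages three claims: that $\pi^s$ is well defined and continuous into $P^{s-1}(M)$, that it is surjective, and that it is not of class $C^1$. The first two are soft and rely only on standard facts about Sobolev diffeomorphism groups and on Moser's lemma; the failure of $C^1$-regularity is the substantive point, and the plan is to exhibit one smooth curve in $\mathcal D^s(M)$ along which the composition with $\pi^s$ is not differentiable as a curve into $P^{s-1}(M)$.

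For continuity I would factor $\pi^s=\Jac\circ\nu$, where $\nu\colon\mathcal D^s(M)\to\mathcal D^s(M)$ is inversion and $\Jac\colon\psi\mapsto\det(D\psi)$. Inversion is continuous because $\mathcal D^s(M)$ is a topological group \cite{IKT2013}. The map $\psi\mapsto D\psi$ is bounded linear $H^s(M,M)\to H^{s-1}$, and since $s-1>d/2$ the space $H^{s-1}(M)$ is a Banach algebra on which the polynomial map $\det$ is smooth, so $\Jac$ is even smooth. Positivity of $\det(D\varphi^{-1})$ is meaningful pointwise by the embedding $H^{s-1}\hookrightarrow C^0$ and holds because $\varphi^{-1}$ is orientation preserving, and $\int_M\det(D\varphi^{-1})\vol=\int_M(\varphi^{-1})^*\vol=1$ by change of variables; hence $\pi^s$ is a continuous map into $P^{s-1}(M)$. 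Surjectivity is the Sobolev version of Moser's theorem \cite{Mo1965,EbMa1970,Ha1982}: every $\rho\in P^{s-1}(M)$ equals $\det(D\varphi^{-1})$ for some $\varphi\in\mathcal D^s(M)$.

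For the failure of $C^1$-regularity I would fix $\varphi\in\mathcal D^s(M)$ with $\rho:=\det(D\varphi^{-1})\in H^{s-1}(M)\setminus H^s(M)$; such $\varphi$ are produced by an explicit local perturbation of the identity, or, alternatively, by surjectivity applied to any $\rho\in P^{s-1}(M)\setminus P^s(M)$. For a smooth vector field $X$ the curve $c(t)=\mathrm{Fl}^X_t\circ\varphi$ is smooth into $\mathcal D^s(M)$ — it is the image of the smooth curve $t\mapsto\mathrm{Fl}^X_t\in\Diff(M)$ under the bounded linear right-translation $R_\varphi\colon\psi\mapsto\psi\circ\varphi$ — with $c(0)=\varphi$ and $\dot c(0)=X\circ\varphi$. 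Using $c(t)^{-1}=\varphi^{-1}\circ\mathrm{Fl}^X_{-t}$ and the chain rule one gets $\pi^s(c(t))=(\rho\circ\mathrm{Fl}^X_{-t})\det(D\mathrm{Fl}^X_{-t})$. Here $t\mapsto\det(D\mathrm{Fl}^X_{-t})$ is smooth into $C^\infty(M)$ and $\frac{1}{t}\bigl(\rho\circ\mathrm{Fl}^X_{-t}-\rho\bigr)\to-d\rho(X)$ in the sense of distributions, so if $\pi^s\circ c$ were differentiable at $0$ into $H^{s-1}(M)$ its derivative would be $-\divv(\rho X)=-\rho\,\divv X-d\rho(X)$, which would then belong to $H^{s-1}(M)$ (this matches the computation of $T_\varphi\pi$ in the proof of \autoref{lem:vertical_bundle}). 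Choosing $X$ to agree near a point $q$ with a coordinate field $\partial_i$ for which $\partial_i\rho\notin H^{s-1}$ locally — possible exactly because $\rho\notin H^s$ — gives $d\rho(X)\notin H^{s-1}(M)$ while $\rho\,\divv X\in H^{s-1}(M)$, so $\divv(\rho X)\notin H^{s-1}(M)$, a contradiction. Hence $\pi^s$ is not $C^1$.

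The main obstacle is precisely the bookkeeping in this last step: one must be sure that the loss of one derivative happens at the level of $\pi^s$ and is not hidden in the regularity of the test curve $c$ — that is, $c$ is $C^1$ (indeed smooth) into $\mathcal D^s(M)$ whereas $\pi^s\circ c$ is not $C^1$ into $H^{s-1}(M)$. At low Sobolev index $s$, where $H^{s-2}(M)$ is no longer a Banach algebra, this forces one to differentiate $t\mapsto\rho\circ\mathrm{Fl}^X_{-t}$ distributionally rather than in a Sobolev space, and only then to compare the distributional derivative with the putative limit in $H^{s-1}(M)$.
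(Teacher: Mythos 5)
Your argument is correct, and it is the standard one: the paper itself gives no proof of this lemma (the subsection's proofs are deferred to \cite{BaJoMo2017}), where the same three ingredients appear — continuity from the topological-group property of $\mathcal D^s(M)$ together with the Banach algebra property of $H^{s-1}$, surjectivity from the Sobolev version of Moser's theorem, and failure of $C^1$ because the formal derivative $-\divv(\rho\,X)$ computed in \autoref{lem:vertical_bundle} loses a derivative in $\rho$. The only point deserving a word of care is the selection of $X$: from $\rho\notin H^s(M)$ one must extract, via a partition-of-unity/localization argument, a single chart and index $i$ with $\partial_i\rho\notin H^{s-1}$ on arbitrarily small neighbourhoods of some point, so that cutting off $\partial_i$ there really yields $d\rho(X)\notin H^{s-1}(M)$.
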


\subsection{Induced metric}

We shall now calculate the induced metric on  $P^\infty(M)$ for a semi-invariant metric on $\Diff(M)$ corresponding to an operator $A(\rho)$ as in \eqref{eq:right_inv_metricG}.

We first address the question of existence of the horizontal bundle.
\begin{lemma}\label{lem:hor_exists}
Let $G$ be an $\operatorname{Diff}_\vol(M)$-invariant metric on $\Diff(M)$ of the form~\eqref{eq:right_inv_metricG}.
Then the horizontal bundle with respect to the projection $\pi$ exists in the Fréchet topology as a complement of the vertical bundle~$\on{Ver}^l$. 
It is given by
\begin{align}
 \operatorname{Hor}_{\varphi}
 &=\left\{\left(A(\rho)^{-1} (\rho \nabla p)\right)\circ\varphi\mid p\in C^{\infty}(M)\right\},
\end{align}
where $\rho=\det(D\varphi^{-1})$. 
Thus, every vector $X\in T_\varphi\Diff(M)$ has a unique decomposition
$X=X^{\operatorname{Ver}}+X^{\operatorname{Hor}}$ with $X^{\operatorname{Ver}}\in\operatorname{Ver}_{\operatorname{\varphi}}(\pi)$ and $X^{\operatorname{Ver}}\in \operatorname{Hor}_{\varphi}(\pi)$.
\end{lemma}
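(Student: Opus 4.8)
The plan is to establish the horizontal bundle by exhibiting an explicit complement to the vertical bundle and checking directly that it is the $G$-orthogonal complement in the Fréchet topology. First I would recall that by Lemma~\ref{lem:vertical_bundle} a vector $X = u\circ\varphi \in T_\varphi\Diff(M)$ is vertical precisely when $\divv(\rho u)=0$ with $\rho = \det(D\varphi^{-1})$. I would then compute the $G$-orthogonal complement of $\on{Ver}_\varphi$: a vector $Y = v\circ\varphi$ is orthogonal to all vertical $X = u\circ\varphi$ iff $\int_M v\cdot A(\rho)u \,\vol = 0$ for all $u$ with $\divv(\rho u)=0$. Writing $w := A(\rho)v$, this says $\int_M w\cdot u\,\vol = 0$ for all such $u$; equivalently, $\langle w, u\rangle_{L^2} = 0$ whenever $u$ lies in the kernel of $u\mapsto \divv(\rho u)$. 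The latter condition is an orthogonality statement that I would dualize: the $L^2$-orthogonal complement of $\{u : \divv(\rho u)=0\}$ is exactly $\{\rho\nabla p : p \in C^\infty(M)\}$, since the $L^2$-adjoint of $u\mapsto \divv(\rho u)$ (for the standard $L^2$ pairing of vector fields and functions) is $p \mapsto -\rho\nabla p$. Hence $w = \rho\nabla p$ for some $p$, i.e.\ $v = A(\rho)^{-1}(\rho\nabla p)$, which gives precisely the claimed formula for $\on{Hor}_\varphi$ after composing with $\varphi$.

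Next I would verify that this candidate subspace is genuinely a complement, i.e.\ that $T_\varphi\Diff(M) = \on{Ver}_\varphi \oplus \on{Hor}_\varphi$ as Fréchet spaces, with the sum being direct. Directness follows from positivity of $A(\rho)$: if $u\circ\varphi$ is both vertical and horizontal, then $u = A(\rho)^{-1}(\rho\nabla p)$ with $\divv(\rho u)=0$, so $0 = \int_M p\,\divv(\rho u)\,\vol = -\int_M \rho\nabla p\cdot u\,\vol = -\int_M \rho\nabla p \cdot A(\rho)^{-1}(\rho\nabla p)\,\vol = -\langle w, A(\rho)^{-1}w\rangle_{L^2}$ with $w = \rho\nabla p$, which forces $w = 0$ by positivity of $A(\rho)^{-1}$, hence $u=0$. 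For spanning, given an arbitrary $X = u\circ\varphi$, I need to produce $p\in C^\infty(M)$ such that $u - A(\rho)^{-1}(\rho\nabla p)$ is divergence-free against $\rho$, i.e.\ such that $\divv\big(\rho\, A(\rho)^{-1}(\rho\nabla p)\big) = \divv(\rho u)$. This is an elliptic equation for $p$: the operator $p \mapsto \divv(\rho\, A(\rho)^{-1}(\rho\nabla p))$ is, up to sign, a self-adjoint elliptic operator of order zero in $p$ (because $A(\rho)^{-1}$ inverts an elliptic operator of order $2k$ while $\rho\nabla(\cdot)$ and $\divv(\rho\,\cdot)$ contribute orders $1$ each), with one-dimensional kernel spanned by constants (matching the one-dimensional cokernel, since $\int_M \divv(\cdot)\vol = 0$). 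Since $\int_M \divv(\rho u)\,\vol = 0$, the right-hand side lies in the range, so a solution $p$ exists and is smooth by elliptic regularity; this is the Hodge-type decomposition adapted to the weighted metric $A(\rho)$.

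The main obstacle I anticipate is making the existence of the horizontal complement rigorous \emph{in the Fréchet category} rather than merely in a Sobolev completion. Standard elliptic theory gives the decomposition on $\mathcal D^s(M)$ for each finite $s$, but one must check that the solution $p$ depends smoothly (tame estimates) so that the Fréchet statement goes through; here I would invoke the fact that $A(\rho)^{-1}$ is a smooth operator on the scale of Sobolev spaces with uniform elliptic estimates (as established via the $2k$-safe machinery cited in the proof of \autoref{wellposednessSob}), so the composed operator $p\mapsto\divv(\rho A(\rho)^{-1}(\rho\nabla p))$ is a smooth invertible (modulo constants) operator on each $H^s$, and taking the inverse limit yields a Fréchet-smooth splitting. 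An alternative, perhaps cleaner route—and the one I would actually follow to match the style of \cite{BaJoMo2017}—is to note that the splitting formula is \emph{explicit} once $A(\rho)^{-1}$ is known to preserve $C^\infty(M)$, and to verify the two defining properties (complementarity and the kernel/range count) directly using Moser's theorem and the ellipticity in \autoref{assump:inertia}, deferring all analytic subtleties about $\rho$-dependence to the lemma on smoothness of $\varphi\mapsto\rho$ already available in \autoref{app:smooth_lemma}. Finally I would record the uniqueness of the decomposition $X = X^{\on{Ver}} + X^{\on{Hor}}$, which is immediate from directness of the sum.
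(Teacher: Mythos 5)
Your proposal is correct and follows essentially the same route as the paper: $G$-orthogonality against the vertical bundle, integration by parts, and a Hodge-type decomposition identifying the orthogonal complement of $\{u\,:\,\divv(\rho u)=0\}$ with $\{\rho\nabla p\}$; your explicit spanning step (solving $\divv\bigl(\rho A(\rho)^{-1}(\rho\nabla p)\bigr)=\divv(\rho u)$ for $p$) is exactly the content of the paper's subsequent \autoref{lem:Lrho_inverse}, which the paper's own proof of this lemma leaves implicit. One small correction: the operator $p\mapsto\divv\bigl(\rho A(\rho)^{-1}(\rho\nabla p)\bigr)$ is an elliptic pseudodifferential operator of order $2-2k$, not of order zero (unless $k=1$), though this does not affect your Fredholm-index and positivity argument.
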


\begin{proof}
Let $h=u\circ\varphi \in T_{\varphi}\Diff(M)$. 
Then $h\in \operatorname{Hor}_{\varphi}(\pi)$ if and only if
\begin{equation}
G_{\varphi}(h,k)=0, \qquad \forall k\in  \operatorname{Ver}_{\varphi}(\pi)\;.
\end{equation}
Let $k=v\circ\varphi$ and $\rho = \det(D\varphi^{-1})$. 
Using the characterization of $\operatorname{Ver}_{\varphi}(\pi)$ in \autoref{lem:vertical_bundle} this yields
\begin{equation}\label{eq:hor_char}
G_{\varphi}(h,k)=\int_M g(A(\rho)u,v) \vol=0, \qquad \forall v\in \Xcal(M) \text{ with } \operatorname{div}(\rho v)=0 
\end{equation}
Consider now the vector field $w=\frac1{\rho}A({\rho})u$.
The Hodge decomposition for $w$ yields
\begin{equation}
w=\nabla p + \tilde w\;.
\end{equation}
with unique components $p\in  C^{\infty}(M)/\RR$ and $\tilde w\in \Xcalvol(M) = \{u\in\Xcal(M)\mid\divv u =0 \}$.
Thus, we can decompose $u$ as
\begin{align}\label{eq:u_Ainv}
u= A(\rho)^{-1}(\rho\nabla p +\rho \tilde w)
\end{align}
with both $A(\rho)^{-1}(\rho\nabla p)$ and $A(\rho)^{-1}(\rho \tilde w)$ in $\Xcal(M)$. 
Plugging \eqref{eq:u_Ainv} into \eqref{eq:hor_char} then yields
\begin{equation}
G_{\varphi}(h,k)=\int_M g(\rho\nabla p +\rho \tilde w,v) \vol = \int_M g(\rho\nabla p,v) \vol +\int_M g(\rho \tilde w,v) \vol 
\end{equation}
Using integration by parts, the first term vanishes
\begin{equation}
 \int_M g(\rho\nabla p,v) \vol =\int_M g(\nabla p,\rho v) \vol=-\int_M  p\operatorname{div}(\rho v) \vol=0.
\end{equation}
Thus, $k=u\circ\varphi$ is horizontal if $u$ is of the form $A(\rho)^{-1}(\rho\nabla f)$.
It remains to show that if $\tilde w\neq 0$, then $u\circ\varphi$ is not horizontal.
For this, we note that $v=\frac{1}{\rho}\tilde w$ satisfies $\operatorname{div}(\rho v)=0$ and
\begin{equation}
	\int_M g(\rho\tilde w,v)\vol = \norm{\tilde w}_{L^2}^2 .
\end{equation}
This concludes the characterization of the horizontal bundle.
\end{proof}

A consequence of \autoref{lem:hor_exists} is that the Riemannian metric $G$ induces a Riemannian metric on $P^\infty(M)$.
To see what the induced metric is, we need to calculate the horizontal lift of a tangent vector $\dot \rho\in T_\rho P^\infty(M)$.
To this end we introduce a field of pseudo differential operators over $P^\infty(M)$ given by

\begin{equation}\label{Lrho}
\bar A({\rho})^{-1}\colon\begin{cases} 
      C^{\infty}(M)/\mathbb R&\longrightarrow C^{\infty}_0(M)  \\
      p&\longmapsto -\on{div}(\rho A^{-1}({\rho})(\rho\nabla p))\;.\\
   \end{cases}   
\end{equation}
Geometrically, one should think of the field $\bar A({\rho})^{-1}$ as the inverse of a Legendre transform (we shall see later that it actually is), identifying (the smooth part of) the cotangent bundle $T^* P^\infty(M)\simeq P^\infty(M)\times C^\infty(M)/\RR$ with the tangent bundle $T P^\infty(M)\simeq P^\infty(M)\times C^\infty_0(M)$.

\begin{lemma}\label{lem:Lrho_inverse}
Let $A(\rho)$ be a field of positive, elliptic, differential operators of order $2k$, self-adjoint with respect to 
the $L^2$ inner product. 
For any $\rho\in P^\infty(M)$ the pseudo differential operator operator $\bar A({\rho})^{-1}$ of order $-2k+2$ defined in \eqref{Lrho} is an isomorphism.
\end{lemma}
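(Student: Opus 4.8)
The plan is to show that $\bar A(\rho)^{-1}$ is a composition of maps each of which is an isomorphism between the appropriate spaces, and then identify the order. Write $\bar A(\rho)^{-1} = -\divv\circ M_\rho \circ A(\rho)^{-1}\circ M_\rho\circ\nabla$, where $M_\rho$ denotes multiplication by $\rho$ (positive and smooth, hence an isomorphism of each Sobolev space). First I would recall that $\nabla\colon C^\infty(M)/\RR \to \nabla(C^\infty(M))$ is injective with the gradient fields as image, and dually $-\divv\colon \{\rho v : \divv(\rho v)=0\}^{\perp_{L^2}}\to C^\infty_0(M)$ is surjective with the stated kernel behavior — essentially the content of the Hodge-type decomposition already invoked in the proof of \autoref{lem:hor_exists}. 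The key structural input is exactly that lemma: it shows that $u\mapsto \rho$-weighted images under $A(\rho)^{-1}$ of gradient fields are precisely the horizontal lifts, so $\bar A(\rho)^{-1}$ is (up to sign) the composition $T_\rho^*P^\infty(M)\to \operatorname{Hor}_{\ph}\to T_\rho P^\infty(M)$, the second arrow being $T_\ph\pi$ restricted to the horizontal bundle, which is a bijection by the very definition of a connection/horizontal complement.

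The concrete steps would be: (i) \emph{Injectivity.} Suppose $\bar A(\rho)^{-1}p = 0$, i.e. $\divv(\rho A(\rho)^{-1}(\rho\nabla p)) = 0$. Set $w := A(\rho)^{-1}(\rho\nabla p)$; then $\divv(\rho w) = 0$, so pairing and integrating by parts, $0 = \int_M p\,\divv(\rho w)\,\vol = -\int_M \nabla p\cdot \rho w\,\vol = -\int_M \rho\nabla p\cdot w\,\vol = -\int_M A(\rho)w\cdot w\,\vol$. Positivity of $A(\rho)$ forces $w = 0$, hence $\rho\nabla p = A(\rho)w = 0$, hence $\nabla p = 0$, i.e. $p\in\RR$, i.e. $p = 0$ in $C^\infty(M)/\RR$. (ii) \emph{Surjectivity.} Given $\dot\rho\in C^\infty_0(M)$, use \autoref{lem:hor_exists} and \autoref{lem:vertical_bundle} (the formula \eqref{derivative_pi_l}): the horizontal bundle is a complement to $\operatorname{Ver}_\ph = \ker T_\ph\pi$, so $T_\ph\pi$ maps $\operatorname{Hor}_\ph$ bijectively onto the image of $T_\ph\pi$. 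That image is all of $T_\rho P^\infty(M) = C^\infty_0(M)$ because $\pi$ is a smooth submersion (\autoref{thm:principal_bundle_diff_dens}). Hence there is $p\in C^\infty(M)/\RR$ with $-\divv(\rho A(\rho)^{-1}(\rho\nabla p)) = \dot\rho$. (iii) \emph{Order count.} $M_\rho\circ\nabla$ has order $1$; $A(\rho)^{-1}$ has order $-2k$ by ellipticity and self-adjointness together with \autoref{assump:inertia} (invertibility on the relevant Sobolev scales, as used in the proof of \autoref{wellposednessSob}); $-\divv\circ M_\rho$ has order $1$ again. So the composition is a pseudodifferential operator of order $1 - 2k + 1 = -2k+2$, as claimed.

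The main obstacle is step (ii), getting surjectivity cleanly in the \emph{smooth} (Fréchet) category rather than only on Sobolev completions. One must be careful that the solution $p$ one produces is genuinely $C^\infty$: the natural route is to note that $\bar A(\rho)^{-1}$ is an elliptic pseudodifferential operator of negative order $-2k+2$ on the closed manifold $M$ (ellipticity being inherited from that of $A(\rho)$, since the principal symbol of $\bar A(\rho)^{-1}$ at $\xi$ is, up to nonzero scalar, $\rho^2|\xi|^2/\sigma_{2k}(A(\rho))(\xi)$, which is nonvanishing on $\{\xi\ne 0\}$), hence Fredholm $H^{s}/\RR \to H^{s+2k-2}_0$ on every Sobolev scale with a smooth parametrix; injectivity from step (i) holds on every scale, so the index and elliptic regularity give surjectivity onto $C^\infty_0(M)$. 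An alternative, perhaps cleaner, argument for surjectivity avoids symbol computations entirely and simply invokes the geometric picture: $\bar A(\rho)^{-1}$ is, by \autoref{lem:hor_exists}, the map sending the "momentum" $p$ to the velocity $\dot\rho = T_\ph\pi$(horizontal lift), and since the horizontal bundle is a true complement of the vertical bundle in the Fréchet tangent space (as established there), this map is a linear isomorphism of Fréchet spaces by construction. I would present the geometric argument as the main line and mention the elliptic-pseudodifferential viewpoint as the reason the order is $-2k+2$.
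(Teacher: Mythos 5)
Your injectivity argument, the factorization $\bar A(\rho)^{-1}=-\divv\circ M_\rho\circ A(\rho)^{-1}\circ M_\rho\circ\nabla$ with ellipticity via multiplicativity of principal symbols, and the order count $1-2k+1=-2k+2$ all match the paper's proof. The problem is the route you choose to make the \emph{main line} for surjectivity. You want to read surjectivity off from \autoref{lem:hor_exists}: the horizontal bundle is a complement of the vertical bundle, $\pi$ is a submersion, hence $T_\varphi\pi|_{\on{Hor}_\varphi}$ is onto $T_\rho P^\infty(M)$, so every $\dot\rho\in C^\infty_0(M)$ equals $-\divv(\rho A(\rho)^{-1}(\rho\nabla p))$ for some $p$. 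But the direct-sum statement $T_\varphi\Diff(M)=\on{Ver}_\varphi\oplus\on{Hor}_\varphi$ is \emph{equivalent} to the surjectivity you are trying to prove: if $u=A(\rho)^{-1}(\rho\nabla p)+v$ with $\divv(\rho v)=0$, applying $-\divv(\rho\,\cdot)$ gives $\bar A(\rho)^{-1}p=-\divv(\rho u)$, and functions of the form $-\divv(\rho u)$ exhaust $C^\infty_0(M)$. What the proof of \autoref{lem:hor_exists} actually establishes is only the \emph{characterization} of $\on{Hor}_\varphi$ as the $G$-orthogonal set to $\on{Ver}_\varphi$ (the Hodge decomposition there is applied to $\tfrac{1}{\rho}A(\rho)u$, and the leftover summand $A(\rho)^{-1}(\rho\tilde w)$ is not vertical in general), while \autoref{hor:lift}, which is what makes $T\pi|_{\on{Hor}}$ a bijection onto $T_\rho P^\infty(M)$, is deduced in the paper \emph{from} the present lemma. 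So the geometric argument is circular and cannot carry the surjectivity step.

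Your fallback — Fredholmness of the elliptic composition on each Sobolev scale, injectivity from positivity, then elliptic regularity to land in $C^\infty_0(M)$ — is exactly the paper's proof, but as written it too has a gap: Fredholm plus injective gives surjective only if the index is zero, and you never say why the index vanishes. The paper gets this from self-adjointness: $\bar A(\rho)^{-1}$ is $L^2$-self-adjoint because $A(\rho)^{-1}$ is and because $-\divv$ is the $L^2$-adjoint of $\nabla$, so the cokernel is identified with the kernel of the adjoint, which is the kernel itself, which your step (i) shows is trivial. Add that one observation, promote the elliptic/Fredholm argument to the main line, and keep the horizontal-bundle picture only as motivation.
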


\begin{proof}
Using integration by parts, $\bar A({\rho})^{-1}$ is self adjoint since $A(\rho)^{-1}$ is.
For any $q$ in $\mathbb N \cup \infty$ we can extend $\bar A({\rho})^{-1}$ to a bounded linear operator $H^q(M)/\mathbb R\to H^{q+2k-2}_0(M)$.  
To prove that $\bar A({\rho})^{-1}$ is an elliptic operator, we decompose it in its components
\begin{align}
\bar A({\rho})^{-1}=  -\on{div}\circ M_{\rho}\circ A({\rho})^{-1} \circ M_{\rho} \circ\nabla\;,
\end{align}
where $M_{\rho}$ is the multiplication operator with $\rho$. 
$M_{\rho}$ is elliptic since $\rho(x)>0$ for all $x\in M$.
Thus, $\bar A({\rho})^{-1}$ is weakly elliptic as it is a composition of weakly elliptic operators; here one uses the fact that the principal symbol is multiplicative, see \cite[Sect.~4]{LM1989}.
As a next step, we want to determine the kernel of $\bar A({\rho})^{-1}$. 
\begin{align}
\int_M  \bar A({\rho})^{-1}(p) p\, \vol = -\int_M \on{div}(\rho A(\rho)^{-1}(\rho\nabla p)) p \vol
=\int_M g(A(\rho)^{-1}(\rho\nabla p)),\rho \nabla p) \vol>0 
\end{align}
for all $p\neq [0] \in H^q(M)/\RR$. 
Here we use that 
\begin{align}
\int_M g(A(\rho)^{-1}u,u) \vol>0
\end{align}
for all $u\in \Xcal(M)\backslash \{ 0\}$.
Thus $\bar A({\rho})^{-1}$ is injective, as it is strictly positive on $H^q(M)/\mathbb R$. 
Since it is Fredholm with index zero it is also surjective.
The isomorphism result is valid for smooth functions due to elliptic regularity, see \cite[Sect.~5]{LM1989}.
\end{proof}

We now obtain an isomorphism between $\on{Hor}_\varphi$ and $T_{\pi(\varphi)}P^\infty(M)$.

\begin{lemma}\label{hor:lift}
Let $G$ be a $\operatorname{Diff}_\vol$-invariant metric on $\Diff(M)$ of the form~\eqref{eq:right_inv_metricG}.
Then 
\begin{equation}
	T_\varphi\pi|_{\on{Hor}_\varphi}\colon \on{Hor}_\varphi \to T_{\pi(\varphi)} P^\infty(M)
\end{equation}
is an isomorphism.
The inverse is given by
\begin{equation}
T_{\pi(\varphi)}P^\infty(M) \ni \dot \rho \mapsto A(\rho)^{-1}(\rho \nabla p)\circ\varphi\in\on{Hor}_\varphi,
\end{equation}
where %
\begin{equation}
p=\bar A({\rho})\dot \rho\;.
\end{equation}
\end{lemma}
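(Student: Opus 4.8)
The plan is to verify directly that the proposed inverse map composes correctly with $T_\varphi\pi$ in both directions, using the characterization of the horizontal bundle from \autoref{lem:hor_exists}, the formula \eqref{derivative_pi_l} for $T_\varphi\pi$, and the isomorphism property of $\bar A(\rho)^{-1}$ from \autoref{lem:Lrho_inverse}. First I would observe that \autoref{lem:hor_exists} already gives that $\on{Hor}_\varphi = \{(A(\rho)^{-1}(\rho\nabla p))\circ\varphi \mid p\in C^\infty(M)\}$, so every horizontal vector is parametrized by a function $p$, uniquely modulo constants (since $\nabla p$ determines $p$ up to a constant, and $p\mapsto A(\rho)^{-1}(\rho\nabla p)\circ\varphi$ has kernel exactly the constants). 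Thus $\on{Hor}_\varphi$ is naturally identified with $C^\infty(M)/\RR$.

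Next I would compute $T_\varphi\pi$ applied to a horizontal vector $h = (A(\rho)^{-1}(\rho\nabla p))\circ\varphi$. Setting $u = h\circ\varphi^{-1} = A(\rho)^{-1}(\rho\nabla p)$ and using \eqref{derivative_pi_l}, we get
\begin{equation*}
T_\varphi\pi(h) = -\divv(\rho u)\vol = -\divv(\rho\, A(\rho)^{-1}(\rho\nabla p)) = \bar A(\rho)^{-1} p,
\end{equation*}
which is precisely the definition \eqref{Lrho} of $\bar A(\rho)^{-1}$. Hence the restriction $T_\varphi\pi|_{\on{Hor}_\varphi}$, under the identification $\on{Hor}_\varphi \simeq C^\infty(M)/\RR$, is exactly the operator $\bar A(\rho)^{-1}\colon C^\infty(M)/\RR \to C^\infty_0(M) = T_\rho P^\infty(M)$. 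By \autoref{lem:Lrho_inverse} this is an isomorphism, which establishes the first claim.

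For the inverse formula: given $\dot\rho \in T_\rho P^\infty(M) = C^\infty_0(M)$, by \autoref{lem:Lrho_inverse} there is a unique $p \in C^\infty(M)/\RR$ with $\bar A(\rho)^{-1} p = \dot\rho$, i.e. $p = \bar A(\rho)\dot\rho$ using the notation of the statement. Then the horizontal vector $A(\rho)^{-1}(\rho\nabla p)\circ\varphi$ maps to $\dot\rho$ under $T_\varphi\pi$ by the computation above, and conversely applying this construction to $T_\varphi\pi(h)$ for $h$ horizontal recovers $h$ by uniqueness of the parametrization. I expect no serious obstacle here: the only mild subtlety is bookkeeping the quotient by constants consistently (ensuring $p = \bar A(\rho)\dot\rho$ is well-defined as an element of $C^\infty(M)/\RR$ and that $\nabla p$ is therefore well-defined), and checking that $\dot\rho$ indeed lies in the image, which is exactly the surjectivity part of \autoref{lem:Lrho_inverse}. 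Everything else is a direct substitution into formulas already established.
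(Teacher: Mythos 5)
Your proof is correct and follows essentially the same route as the paper: compute $T_\varphi\pi$ on a horizontal vector via \eqref{derivative_pi_l} to see that it equals $\bar A(\rho)^{-1}p$, then invoke \autoref{lem:Lrho_inverse} to invert. The extra remark that the parametrization $p\mapsto A(\rho)^{-1}(\rho\nabla p)\circ\varphi$ has kernel exactly the constants is a welcome (and correct) clarification that the paper leaves implicit.
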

\begin{proof}
The horizontal lift of a tangent vector $\dot \rho \in T_{\rho}P^\infty(M) $ is the unique horizontal vector field $u$ such that
\begin{align}
T_{\varphi}\pi (u\circ\varphi)= \dot \rho
\end{align}
where $\varphi$ is some diffeomorphism with $\pi(\varphi)=\rho$.
Using the characterization of the horizontal bundle and the formula for $T\pi$ this yields the equation
\begin{align}
\dot \rho = -\on{div}^{\rho\vol}(u) \det(D\varphi^{-1})= -\on{div}^{\rho\vol}( A(\rho)^{-1}(\rho\nabla p))\det(D\varphi^{-1}) = -\on{div}(\rho A(\rho)^{-1}\rho\nabla p)
\end{align}
The above lifting equation can be rewritten as
\begin{align}
\dot \rho = -\on{div}(\rho A(\rho)^{-1}(\rho\nabla p))=\bar A({\rho})^{-1}(p).
\end{align}
Applying $\bar A({\rho})$ to the above equation yields the desired result.
\end{proof}

Using \autoref{hor:lift} we obtain the formula for the induced metric on $P^\infty(M)$. 
\begin{proposition}\label{pro:descending_metric_formula}
Let $G$ be a  $\operatorname{Diff}_\vol$-invariant metric on $\Diff(M)$ of the form~\eqref{eq:right_inv_metricG},
with inertia operator $A({\rho})$ as in \autoref{lem:Lrho_inverse} of order $2k$.
Then the induced  metric on $P^\infty(M)$ is given by
 \begin{align}\label{met:dens}
\bar G_{\rho}(\dot \rho,\dot \rho)&= %
\int_M  (\bar A(\rho) \dot \rho)\, \dot\rho\,\vol . %
\end{align}
The pseudo-differential operator $\bar A({\rho})$ is of order $2k-2$, so $\bar G$ is of order $k-1$.
\end{proposition}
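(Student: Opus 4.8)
To establish the formula I would exploit the Riemannian submersion picture furnished by the preceding lemmas. Since by \autoref{lem:hor_exists} the horizontal bundle $\operatorname{Hor}$ is a genuine complement to $\operatorname{Ver}$ in the Fréchet category, the induced metric is defined, for $\dot\rho\in T_\rho P^\infty(M)$, by $\bar G_\rho(\dot\rho,\dot\rho) := G_\varphi(\widehat{\dot\rho},\widehat{\dot\rho})$ where $\widehat{\dot\rho}$ is the horizontal lift of $\dot\rho$ at any $\varphi$ with $\pi(\varphi)=\rho$. The first thing to check is that this does not depend on $\varphi$: if $\pi(\varphi')=\rho$ as well, then $\varphi'=\varphi\circ\psi$ with $\psi\in\Diffvol(M)$ (indeed $\pi(\varphi\circ\psi)=\pi(\varphi)$ because $\det D\psi^{-1}\equiv 1$), the horizontal lift at $\varphi\circ\psi$ is $\widehat{\dot\rho}\circ\psi$, and $G_{\varphi\circ\psi}(\widehat{\dot\rho}\circ\psi,\widehat{\dot\rho}\circ\psi)=G_\varphi(\widehat{\dot\rho},\widehat{\dot\rho})$ by the semi-invariance property \eqref{eq:sdiff_inv_metric}. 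This is the one place where the $\Diffvol(M)$-invariance (as opposed to an arbitrary choice of $G$) is essential.

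With well-definedness in hand, I would substitute the explicit horizontal lift computed in \autoref{hor:lift}, namely $\widehat{\dot\rho}=\big(A(\rho)^{-1}(\rho\nabla p)\big)\circ\varphi$ with $p=\bar A(\rho)\dot\rho$, into the defining formula \eqref{eq:right_inv_metricG} for $G$ with $u=v=A(\rho)^{-1}(\rho\nabla p)$. The inertia operator then cancels its inverse, leaving $\int_M A(\rho)^{-1}(\rho\nabla p)\cdot(\rho\nabla p)\,\vol$. A single integration by parts turns the integrand into divergence form and yields $-\int_M \divv\!\big(\rho A(\rho)^{-1}(\rho\nabla p)\big)\,p\,\vol=\int_M \bar A(\rho)^{-1}(p)\,p\,\vol$ by the definition \eqref{Lrho} of $\bar A(\rho)^{-1}$. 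Since $p=\bar A(\rho)\dot\rho$ we have $\bar A(\rho)^{-1}(p)=\dot\rho$, so the whole expression collapses to $\int_M \dot\rho\,p\,\vol=\int_M (\bar A(\rho)\dot\rho)\,\dot\rho\,\vol$, which is exactly the claimed formula (the full bilinear form being recovered by polarization).

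For the order statement it suffices to invoke \autoref{lem:Lrho_inverse}: there $\bar A(\rho)^{-1}$ is shown to be an elliptic pseudo-differential operator of order $-2k+2$ and an isomorphism, so its inverse $\bar A(\rho)$ is an elliptic pseudo-differential operator of order $2k-2$, whence $\bar G$ is a Sobolev metric of order $k-1$. I do not anticipate any genuine analytic difficulty here, since \autoref{lem:hor_exists}, \autoref{hor:lift} and \autoref{lem:Lrho_inverse} carry the load; the only subtle point is the consistency (independence of the representative $\varphi$) of the descended metric, which is precisely the invariance computation in the first paragraph, and the remainder is a one-line integration by parts.
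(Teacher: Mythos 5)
Your proposal is correct and follows essentially the same route as the paper: substitute the explicit horizontal lift from \autoref{hor:lift} into \eqref{eq:right_inv_metricG}, cancel $A(\rho)$ against $A(\rho)^{-1}$, integrate by parts once to recognize $\bar A(\rho)^{-1}$ from \eqref{Lrho}, and use $\bar A(\rho)^{-1}(p)=\dot\rho$; the order claim likewise comes from \autoref{lem:Lrho_inverse}. The only addition is your explicit check that the descended metric is independent of the representative $\varphi$, which the paper leaves implicit in the Riemannian submersion setup but which is a worthwhile (and correct) remark.
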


\begin{proof}
From \autoref{hor:lift} for the horizontal lift of a tangent vector we get
\begin{align}
\bar G_{\rho}(\dot \rho ,\dot \rho )&= G_\id( \rho \nabla \bar A_{\rho}(\dot \rho), \rho \nabla \bar A_{\rho}(\dot \rho)) = \int_M g\left(   \rho \nabla \bar A_{\rho}(\dot \rho), A^{-1}\left( \rho \nabla \bar A_{\rho}(\dot \rho)  \right)\right )\vol.
\end{align}
Using  integration by parts and that $A$ is self-adjoint we obtain
\begin{align}
G_{\rho}(\dot \rho, \dot \rho)&= \int_M g\left(  \on{div}  \rho A^{-1} \rho \nabla \bar A_{\rho}(\dot \rho),    \bar A_{\rho}(\dot \rho) \right )\vol
\end{align}
Since $\bar A_{\rho}^{-1}=\on{div}  \rho A^{-1} \rho \nabla$ this equals
 \begin{align}
G_{\rho}(\dot \rho, \dot \rho)&= \int_M g\left(  \bar A_{\rho}^{-1}\bar A_{\rho}(\dot \rho),   \bar A_{\rho}(\dot \rho) \right )\;\vol\\
&=\int_M g\left(  \dot \rho,   \bar A_{\rho}(\dot \rho) \right )\;\vol\;.
\end{align}
The order of the pseudodifferential operator $\bar A_\rho$ follows by counting derivatives.
\end{proof}
The following lemma connects (local and global) well-posedness of the geodesic initial value problem on
$\Diff(M)$ to well-posedness on $P^\infty(M)$ equipped with the induced quotient metric.
\begin{lemma}\label{thm:wellposednessdiffwellposednessprob2}
Let $G$ be a  $\operatorname{Diff}_\vol$-invariant metric on $\Diff(M)$ of the form~\eqref{eq:right_inv_metricG}. 
Assume that  given any $(\varphi_0,v_0)\in T\Diff(M)$, there exists a unique non-extendable geodesic
$(\varphi(t),v(t))\in C^{\infty}(J,T\Diff(M))$
defined on the maximal interval of existence $J$, which is open and contains zero.

Let $\bar G$ be the induced metric on $P^\infty(M)$.
Then, given any $(\rho_0,\dot p_0)\in T P^\infty(M)$, there  
exists a unique non-extendable geodesic
$(\rho(t),p(t))\in C^{\infty}(J,T P^\infty(M))$
defined on the same maximal interval of existence $J$.
\end{lemma}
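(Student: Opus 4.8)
The plan is to use that $\pi\colon\Diff(M)\to P^\infty(M)$ is a Riemannian submersion for $G$ and $\bar G$: by \autoref{lem:vertical_bundle}, \autoref{lem:hor_exists}, \autoref{hor:lift} and \autoref{pro:descending_metric_formula} the vertical bundle $\on{Ver}$ has a $G$-orthogonal complement $\on{Hor}$, and $T_\varphi\pi$ restricts to a linear isometry $\on{Hor}_\varphi\to T_{\pi(\varphi)}P^\infty(M)$. I read ``geodesic'' as a critical point of the energy $\gamma\mapsto\int G_\gamma(\dot\gamma,\dot\gamma)\,\ud t$ among fixed-endpoint curves, and likewise for $\bar G$ on $P^\infty(M)$; the idea is to transport geodesics along $\pi$ in both directions and to invoke the hypothesis on $\Diff(M)$.

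For existence I would start from $(\rho_0,\dot p_0)\in TP^\infty(M)$, pick $\varphi_0$ with $\pi(\varphi_0)=\rho_0$ (available by \autoref{thm:principal_bundle_diff_dens}) and let $v_0\in\on{Hor}_{\varphi_0}$ be the horizontal lift of $\dot p_0$, i.e.\ $v_0=\bigl(A(\rho_0)^{-1}(\rho_0\nabla p_0)\bigr)\circ\varphi_0$ with $p_0=\bar A(\rho_0)\dot p_0$ as in \autoref{hor:lift}. The hypothesis then gives a maximal $G$-geodesic $(\varphi(t),v(t))\in C^\infty(J,T\Diff(M))$ through $(\varphi_0,v_0)$ on an open $J\ni0$. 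The key claim is that it stays horizontal, $v(t)\in\on{Hor}_{\varphi(t)}$ for all $t\in J$. I would prove this by Noether's theorem: since $G$ is semi-invariant \eqref{eq:sdiff_inv_metric} the right $\Diffvol(M)$-action on $\Diff(M)$ is by isometries, so for each $\xi\in\Xcalvol(M)$ the momentum $t\mapsto G_{\varphi(t)}\bigl(v(t),T\varphi(t)\circ\xi\bigr)$ is conserved along the geodesic; it vanishes at $t=0$ because $v_0$ is horizontal, and since the infinitesimal generators fill out the vertical bundle---$\{T\varphi(t)\circ\xi:\xi\in\Xcalvol(M)\}=\on{Ver}_{\varphi(t)}$, a vector $u\circ\varphi$ being vertical iff $\on{div}(\rho u)=0$, which for $u=\varphi_*\xi$ reduces to $\on{div}\xi=0$---its vanishing for all $t$ says precisely that $v(t)$ is $G$-orthogonal to $\on{Ver}_{\varphi(t)}$. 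Then $\rho(t):=\pi(\varphi(t))\in C^\infty(J,P^\infty(M))$ with $\dot\rho(0)=T_{\varphi_0}\pi(v_0)=\dot p_0$, and I would check it is a $\bar G$-geodesic by the standard submersion argument: for a fixed-endpoint variation $\rho_s$ of $\rho$ over a compact $[0,T]\subset J$, the horizontal lift $\hat\varphi_s$ of $\rho_s$ through $\varphi(0)$ exists on $[0,T]$ (the lifting vector field is smooth on $[0,T]\times M$ and $M$ is compact), $\hat\varphi_0=\varphi$, and the energy of $\hat\varphi_s$ equals the $\bar G$-energy of $\rho_s$; since $\partial_s|_0\hat\varphi_s(0)=0$ and $\partial_s|_0\hat\varphi_s(T)\in\on{Ver}_{\varphi(T)}$, the first variation of the energy computed as in the proof of \autoref{thm:geodesicequationDiff} (keeping boundary terms) has vanishing interior part (as $\varphi$ is a geodesic) and vanishing boundary part $G_{\varphi(T)}\bigl(v(T),\partial_s|_0\hat\varphi_s(T)\bigr)$ (as $v(T)$ is horizontal), so $\partial_s|_0$ of the $\bar G$-energy of $\rho_s$ vanishes. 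This produces the desired $\bar G$-geodesic $(\rho(t),p(t))$, $p(t)=\dot\rho(t)$, through $(\rho_0,\dot p_0)$, defined on all of $J$.

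For uniqueness and the identification of the maximal interval I would run the correspondence backwards. Given any $\bar G$-geodesic $(\bar\rho(t),\bar p(t))\in C^\infty(J',TP^\infty(M))$ with $(\bar\rho(0),\bar p(0))=(\rho_0,\dot p_0)$, its horizontal lift through $\varphi_0$---the solution of $\hat\varphi_t=u(t)\circ\hat\varphi$, $\hat\varphi(0)=\varphi_0$, with $u(t)=A(\bar\rho(t))^{-1}\bigl(\bar\rho(t)\nabla\bar A(\bar\rho(t))\bar p(t)\bigr)$---exists on all of $J'$ by the same compactness argument and satisfies $(\hat\varphi(0),\hat\varphi_t(0))=(\varphi_0,v_0)$. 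Projecting a fixed-endpoint variation of $\hat\varphi$ to one of $\bar\rho=\pi\circ\hat\varphi$ and using the orthogonal splitting of the energy into a $\bar G$-part and a vertical part that is minimal along the horizontal curve, one sees that $\hat\varphi$ is a $G$-geodesic. By the assumed uniqueness on $\Diff(M)$ it coincides with $\varphi$ on $J\cap J'$, and non-extendability of $\varphi$ forces $J'\subseteq J$; hence $J$ is exactly the maximal interval and $\bar\rho=\pi\circ\hat\varphi=\pi\circ\varphi=\rho$. Smoothness in the Fréchet topology is inherited from that of $(\varphi,v)$ through the smooth projection $\pi$.

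The hard part will be the ``stays horizontal'' step and the two variational transfers, since $(\Diff(M),G)$ carries only a \emph{weak} metric and the entire picture lives in the Fréchet category: I need the momentum-map conservation and the first-variation formula with boundary terms to be legitimate here, the generators of the $\Diffvol(M)$-action to genuinely span $\on{Ver}$ fibrewise, and the relevant horizontal lifts to exist on the full time interval (this last being where compactness of $M$ is used). Granting the structural lemmas already proved, everything else is routine bookkeeping of initial data along $\pi$.
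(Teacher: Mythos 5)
Your proof is correct and follows the same route as the paper's: lift the initial data horizontally via \autoref{hor:lift}, invoke the hypothesis to obtain a geodesic on $\Diff(M)$, observe that it remains horizontal, and project by $\pi$. The paper simply cites the Riemannian-submersion property for the ``stays horizontal'' step, for the geodesic property of the projected curve, and for the identification of the maximal interval $J$, whereas you supply the Noether (conserved momentum) and first-variation arguments explicitly together with the reverse horizontal lift for uniqueness; these are the standard justifications and they go through in this weak-metric, Fr\'echet setting exactly as you describe.
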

\begin{proof}
To prove this result, we need, for any choice of initial data $(\rho_0,p_0)$, to construct a solution $(\rho,p) \in C^{\infty}(J,T P^\infty(M))$ to the geodesic initial value problem  with $\rho(0)=\rho_0$ and $p(0) = p_0$. To this end, let $\varphi_0$ be an arbitrary diffeomorphism such that $\varphi_{0*}\vol = \rho_0\vol$.
Let $\dot\rho_0 = \bar A({\rho_0})^{-1}p_0$.
Using \autoref{hor:lift} we can lift $(\rho_0,\dot\rho_0)$ to a unique horizontal vector $u_0\circ \varphi_0 \in T_{\varphi_0}\Diff(M)$.
Thus, using the assumptions, we obtain a unique solution $(\varphi,v)$ with $\varphi(0) = \varphi_0$ and $v(0) = \dot\varphi_0\circ\varphi_0^{-1}=v_0$ on a non-empty maximal existence interval $J$ containing $0$.
Since $\dot\varphi(0) \in \on{Hor}_{\varphi(0)}$ it follows that $\dot\varphi(t) \in \on{Hor}_{\varphi(t)}$ for every $t\in J$ and thus it projects to a geodesic on $P^\infty(M)$, since the two metrics are related by a Riemannian submersion. From here the result follows.
\end{proof}
As an immediate consequence is the following theorem on local and global well-posedness on the space of densities.

\begin{theorem}\label{thm:wellposednessdiffwellposednessprob}
Let $G$ be a  $\operatorname{Diff}_\vol$-invariant metric on $\Diff(M)$ of the form~\eqref{eq:right_inv_metricG} and let 
$\bar G$ be the induced metric on $P^\infty(M)$. We have:
\begin{enumerate}
\item If the inertia operator $A(\rho)$ satisfies \autoref{assump:inertia} then the geodesic initial value problem on 
$ P^\infty(M)$ is locally well-posed, i.e, given any $(\rho_0,\dot p_0)\in T P^\infty(M)$ there  
exists a unique non-extendable geodesic
$(\rho(t),p(t))\in C^{\infty}(J,T P^\infty(M))$
defined on the  maximal interval of existence $J$, which is open and contains zero.

\item Let $A(\rho)$ be of the form \eqref{inertia_operator_repeatdef} with $a_1(x)>C_1$ and $a_k(x)>C_2$ for some constants $C_1,C_2>0$ and with $k> \frac{d}2+1$. 
Then the space $\left( P^\infty(M),\bar G\right)$ is geodesically complete, i.e., for any initial condition 
$(\rho_0,\dot p_0)\in T P^\infty(M)$ there  
exists a unique geodesic
$(\rho(t),p(t))\in C^{\infty}(J,T P^\infty(M))$
with interval of existence $J=\mathbb R$.
\end{enumerate}
\end{theorem}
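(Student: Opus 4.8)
The plan is to deduce both parts from the corresponding statements on $\Diff(M)$ proved above, using the transfer lemma \autoref{thm:wellposednessdiffwellposednessprob2}. That lemma already encodes the key geometric fact that $\pi\colon\Diff(M)\to P^\infty(M)$ is a Riemannian submersion from $(\Diff(M),G)$ onto $(P^\infty(M),\bar G)$, so that geodesics of $\bar G$ are exactly the $\pi$-projections of horizontal geodesics of $G$, with the same maximal interval of existence. Thus essentially no new analysis on $P^\infty(M)$ is required; the proof is an assembly of earlier results, in the spirit of the right invariant case treated in \cite{BaJoMo2017}.

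For part~(1), I would first invoke \autoref{Thm:localwellposedness}: under \autoref{assump:inertia}, the geodesic initial value problem on $\Diff(M)$ for the metric \eqref{eq:right_inv_metricG} is locally well-posed, with a unique non-extendable smooth geodesic defined on a maximal open interval $J\ni 0$ for every $(\varphi_0,v_0)\in T\Diff(M)$. This is precisely the hypothesis of \autoref{thm:wellposednessdiffwellposednessprob2}, whose conclusion therefore gives, for any initial data $(\rho_0,\dot p_0)\in TP^\infty(M)$, a unique non-extendable geodesic of $\bar G$ on the same maximal interval $J$, which is open and contains zero; this is the claim of part~(1). For part~(2), the hypotheses $k>\frac d2+1$, $a_1(\cdot)>C_1$, $a_k(\cdot)>C_2$ together with the special form \eqref{inertia_operator_repeatdef} of $A(\rho)$ are exactly those of \autoref{Thm:globalwellposedness}, which yields geodesic completeness of $(\Diff(M),G)$, i.e., every geodesic extends to all of $\RR$. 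Applying \autoref{thm:wellposednessdiffwellposednessprob2} once more, the projected geodesic on $P^\infty(M)$ then also exists on $J=\RR$, for every initial condition, so $(P^\infty(M),\bar G)$ is geodesically complete.

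The points that genuinely need care are all packaged into the auxiliary results: invertibility of $\bar A(\rho)$, needed so that the initial velocity $\dot\rho_0=\bar A(\rho_0)^{-1}p_0$ and its horizontal lift are well-defined (\autoref{lem:Lrho_inverse}, \autoref{hor:lift}); that $\bar G$ in \eqref{met:dens} really is the submersion metric (\autoref{pro:descending_metric_formula}); and, the main obstacle, that a $G$-geodesic tangent to the horizontal distribution at one time remains horizontal for all time. Since the extension $\pi^s\colon\mathcal D^s(M)\to P^{s-1}(M)$ is only continuous, not $C^1$ (\autoref{lem:vertical}), the classical Riemannian-submersion argument cannot be applied directly on the Fréchet manifolds; one must instead argue at each finite Sobolev level $s$, where the geometry is smooth, and then pass back to the smooth category by the no-loss-no-gain result \autoref{lem:nolossnogain} of \autoref{app:noloss}. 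However, this is exactly what is carried out in the proof of \autoref{thm:wellposednessdiffwellposednessprob2}, so at this stage it suffices to cite it; the only thing left to write is the one-line reduction to \autoref{Thm:localwellposedness} and \autoref{Thm:globalwellposedness}.
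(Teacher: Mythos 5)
Your proposal is correct and follows exactly the paper's route: the theorem is stated there as an immediate consequence of the transfer lemma \autoref{thm:wellposednessdiffwellposednessprob2} applied to \autoref{Thm:localwellposedness} for part (1) and \autoref{Thm:globalwellposedness} for part (2). Your additional remarks about where the real work is hidden (invertibility of $\bar A(\rho)$, horizontality being preserved along geodesics, and the need to argue at finite Sobolev levels before passing back to the smooth category) accurately reflect the content of the cited auxiliary results.
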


\appendix
\section{No-loss-no-gain for semi-invariant flows}\label{app:noloss}
In the following we will show that the class of $\operatorname{Diff}_\vol$-invariant metrics possesses a remarkable geometric property: there is no loss or gain of regularity during the geodesic evolution. 
Our result is a generalization of the classical no-loss-no-gain result for fully right invariant metrics, first proved by Ebin and Marsden~\cite{EbMa1970}.

\begin{lemma}[no-loss-no-gain]\label{lem:nolossnogain}
Let $F\colon T\Diff(M)\to TT\Diff(M)$ be a $\operatorname{Diff}_\vol$-equivariant vector field on $T\Diff(M)$.
Assume that $F$ extends to a smooth vector field $F^s$ on $T\mathcal D^{s}(M)$
for all $s \geq s_0$ for some $s_0 > \frac{d}{2}+1$ and let
$J_s$
denote the maximal interval of existence of the solution to the corresponding 
initial value problem on $T\mathcal D^{s}(M))$ with initial conditions $(\varphi_0,v_0)\in T\mathcal D^{s}(M)$.

If $(\varphi_0,v_0)\in T\mathcal D^{s+1}(M)$ then $J_{s+1}(\varphi_0,v_0)=J_s(\varphi_0,v_0)$, i.e., the flow of the vector field $F$ has no loss or gain in regularity on its maximal interval of existence.
\end{lemma}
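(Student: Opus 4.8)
The plan is as follows. One inclusion is free: since $F^s$ and $F^{s+1}$ are continuous and both extend $F$, the former restricts to the latter on the continuously and densely included submanifold $T\mathcal D^{s+1}(M)$, so the $\mathcal D^{s+1}$-solution is also a $\mathcal D^s$-solution with the same data, whence by uniqueness $J_{s+1}(\varphi_0,v_0)\subseteq J_s(\varphi_0,v_0)$. For the opposite inclusion I would show that, when $(\varphi_0,v_0)\in T\mathcal D^{s+1}(M)$, the $\mathcal D^s$-solution $t\mapsto(\varphi(t),v(t))$ is in fact a $C^\infty$ curve in $T\mathcal D^{s+1}(M)$ on all of $J_s$; it then solves the $\mathcal D^{s+1}$-initial value problem on $J_s$, so maximality of $J_{s+1}$ forces $J_{s+1}\supseteq J_s$, and the two intervals coincide.

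The main tool is the $\Diffvol(M)$-equivariance of $F$. I would first observe that the extension $F^s$ inherits $\mathcal D^s_\vol(M)$-equivariance: since $\Diffvol(M)$ is dense in $\mathcal D^s_\vol(M)$ and $\psi\mapsto TR_\psi$ is continuous, $F^s$ is $\mathcal D^s_\vol(M)$-equivariant, hence its local flow $\Phi^s$ commutes with the right translations $TR_\psi$, $\psi\in\mathcal D^s_\vol(M)$. In particular the maximal interval is constant along such orbits and
\[
\Phi^s_t(\varphi_0\circ\psi,\,v_0\circ\psi)=TR_\psi\big(\Phi^s_t(\varphi_0,v_0)\big)=\big(\varphi(t)\circ\psi,\,v(t)\circ\psi\big),\qquad \psi\in\mathcal D^s_\vol(M),\ t\in J_s.
\]
Now the extra regularity of the data enters: right translation by an $H^{s+1}$-map is one derivative smoother than by an $H^s$-map, so by the standard Sobolev composition lemmas (cf.\ \cite{EbMa1970,IKT2013}) the map $\psi\mapsto(\varphi_0\circ\psi,\,v_0\circ\psi)$ is $C^1$ from a neighborhood of the identity in $\mathcal D^s_\vol(M)$ into $T\mathcal D^s(M)$. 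Composing with the smooth flow $\Phi^s_t$, the map $\psi\mapsto(\varphi(t)\circ\psi,\,v(t)\circ\psi)$ is therefore $C^1$ near the identity; moreover its derivative there, applied to a fixed tangent vector, is the value at time $t$ of the variational equation along the trajectory $\Phi^s_{(\cdot)}(\varphi_0,v_0)$, hence depends smoothly on $t\in J_s$.

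The heart of the argument is then a tangential-to-full regularity bootstrap. Differentiating the displayed identity at $\psi=\mathrm{id}$ along the flow $\psi_\epsilon$ of a smooth divergence-free vector field $X\in\Xcalvol(M)$ gives
\[
\frac{d}{d\epsilon}\Big|_{\epsilon=0}\big(\varphi(t)\circ\psi_\epsilon\big)=(D\varphi(t))\,X\in H^{s},\qquad \frac{d}{d\epsilon}\Big|_{\epsilon=0}\big(v(t)\circ\psi_\epsilon\big)=(Dv(t))\,X\in H^{s},
\]
smoothly in $t$. Although $D\varphi(t)$ and $Dv(t)$ are a priori only $H^{s-1}$, smooth divergence-free vector fields span $T_xM$ at every point of $M$ (for $d=1$ these are the constant multiples of $\partial_x$; for $d\ge2$ the evaluation map $\Xcalvol(M)\to T_xM$ is surjective—take, e.g., Hamiltonian vector fields when $d=2$), so by compactness of $M$ one can fix finitely many of them forming a smooth frame near every point; expressing $D\varphi(t)$ and $Dv(t)$ through their contractions with this frame and the smooth inverse frame shows $D\varphi(t),Dv(t)\in H^{s}$, hence $\varphi(t),v(t)\in H^{s+1}$, with smooth dependence on $t$. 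Thus $t\mapsto(\varphi(t),v(t))$ is a $C^\infty$ curve in $T\mathcal D^{s+1}(M)$ on $J_s$ that solves the $\mathcal D^{s+1}$-equation with data $(\varphi_0,v_0)$, so $J_{s+1}\supseteq J_s$ and equality follows.

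I expect the main obstacle to be precisely this bootstrap step: in the fully right invariant case of \cite{EbMa1970} one may differentiate along arbitrary smooth reparametrisations, whereas here only volume-preserving ones are at our disposal, and the argument survives only because divergence-free vector fields still span the tangent spaces pointwise. A secondary point requiring care is the regularity of the composition maps $\psi\mapsto\varphi_0\circ\psi$ and $\psi\mapsto v_0\circ\psi$, for which the single derivative gained from $(\varphi_0,v_0)\in T\mathcal D^{s+1}(M)$ is exactly what is needed to make them $C^1$, together with the (routine) verification that $F^s$ genuinely inherits $\mathcal D^s_\vol(M)$-equivariance.
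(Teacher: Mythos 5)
Your proposal is correct and follows essentially the same route as the paper: use $\Diffvol(M)$-equivariance of the flow, differentiate $\operatorname{Flow}^s_t(\varphi_0\circ\psi_\epsilon,v_0\circ\psi_\epsilon)$ at $\epsilon=0$ along one-parameter subgroups of volume-preserving diffeomorphisms (where the extra derivative of the data is spent), and then convert the resulting statement ``$T\varphi(t).X\in H^s$ for all divergence-free $X$'' into $\varphi(t)\in H^{s+1}$. The only cosmetic difference is in this last step: the paper (following Ebin--Marsden's Lemma~12.2) exhibits explicit divergence-free fields that equal coordinate vector fields on small neighborhoods, whereas you invoke a local frame of divergence-free fields and contract against its smooth dual coframe --- the two arguments are interchangeable.
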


The proof of this result follows the lines of the proof for geodesic sprays of right invariant metrics in \cite[Thm.~12.1 and Lem.~12.2]{EbMa1970} with only minor adaptations. 
In fact \cite[Lem.~12.2]{EbMa1970} is already formulated for all
of $\mathcal D^{s}(M)$ and uses only the invariance with respect to divergence free vector fields. 
\begin{proof}
We start by proving the following claim, which is essentially \cite[Lem. 12.2]{EbMa1970}.

\smallskip
\noindent\textbf{Claim A} (\cite[Lem. 12.2]{EbMa1970}):  Let $\varphi \in \mathcal D^{s}(M)$.  If
$T\varphi.X: M\to TM$ is an $H^s$-map for all $X\in T_{\operatorname{id}}  \mathcal D^{s}_{\vol}(M)$ then $\varphi\in \mathcal D^{s+1}(M)$. 
\smallskip

Let $p\in M$. We choose an open neighborhood $U$ of $p$ and coordinates $x^i$ such that $\vol |_{U}=dx^1\wedge\ldots\wedge dx^d$. 
The idea of the proof is to construct  divergence free vector fields $X$,  that are locally acting as the $i$-th derivative, i.e., $T\varphi.X=\frac{\partial}{\partial_{x_i}}\varphi$.
Therefore let $\lambda$ be a smooth function with support in $U$, that is constant one on a smaller neighborhood $V\subset U$.
Consider the vector field $X$ on $U$ via
\begin{equation}
X = \left( x_2\lambda_{x_2}+\lambda, - x_2\lambda_{x_1},0,\ldots,0   \right)\;,
\end{equation}
where $\lambda_{x_i}$ denotes the $i$-th partial derivative of $\lambda$. Since $X$ has compact support in $U$, it can be extended to a smooth vector field to all of $M$ by letting it zero outside of $U$. 
A direct calculation shows that $\operatorname{div} X=0$ and thus $X\in  T_{\operatorname{id}}  \mathcal D^{s}_{\vol}(M)$. On the smaller neighborhood $V$ the vector field $X$ is $(1,0,\ldots,0)$ and thus
\begin{equation}\label{eq:tvarphiX}
T\varphi.X|_V=\frac{\partial}{\partial_{x_1}}\varphi\;.
\end{equation}
Here we used that  $T\varphi.X|_V=\sum_{i=1}^d \frac{\partial}{\partial_{x_i}}(\varphi) X^i$ on $U$. 
Using \eqref{eq:tvarphiX} it follows that $T\varphi.X$ is $H^s(V)$ if and only if $\frac{\partial}{\partial_{x_i}}\varphi$ is of class $H^s$. Now the statement follows by iterating the argument for all other coordinates.

Let $\operatorname{Flow}^s_t(\varphi_0,v_0)$ be the vector flow of the smooth vector field $F^s$ on  $T\mathcal D^{s}(M))$
with  initial conditions $(\varphi_0,v_0)\in T\mathcal D^{s}(M)$. 
Recall that $\operatorname{Flow}^s_t(\varphi_0,v_0)$ is defined for $t\in J_s(\varphi_0,v_0)$. 
We need to prove that for initial conditions $(\varphi_0,v_0)\in T\mathcal D^{s+1}(M)$
the flow $\operatorname{Flow}^s_t(\varphi_0,v_0) \in \mathcal D^{s+1}(M)$ for all $t\in J_s(\varphi_0,v_0)$.
The strategy is to prove that $T\varphi.X$ for $\varphi=\operatorname{Flow}^s_t(\varphi_0,v_0)$ is an $H^s$-map for each $X\in\Xcal_\mu(M)$ and then use Claim~A.

Let $\eta(\tau)$ be the one parameter subgroup generated by some divergence free vector field $X\in T_{\operatorname{id}} \mathcal D^s_{\vol}(M)$. 
Using the invariance of the vector field (flow resp.) under volume preserving diffeomorphisms we have 
\begin{equation}
\underbrace{\operatorname{Flow}^s_t(\varphi_0,v_0)}_{\varphi}\circ\eta(\tau)=\operatorname{Flow}^s_t(\varphi_0\circ\eta(\tau),v_0\circ\eta(\tau))
\end{equation}
Differentiating this equation at $\tau=0$ we obtain 
\begin{equation}
	T\varphi.X = \frac{d}{d\tau}\bigg|_{\tau=0}\operatorname{Flow}^s_t(\varphi_0\circ\eta(\tau),v_0\circ\eta(\tau)).
\end{equation}
Now, the right-hand side is an $H^s$-map since $T\operatorname{Flow}^s_t\colon TT\mathcal D^s(M)\to T\mathcal D^s(M)$ is smooth and 
\begin{equation}
	\frac{d}{d\tau}\bigg|_{\tau=0}(\varphi_0\circ\eta(r),v_0\circ\eta(r)) \in TT\mathcal D^s(M)
\end{equation}
since $(\varphi_0,v_0)\in T\mathcal D^{s+1}(M)$.
Thus, the left-hand side $T\varphi.X$ also has to be an $H^s$-map and the result follows from Claim~A.
We have now shown that $J_{s}(\varphi_0,v_0)\subset J_{s+1}(\varphi_0,v_0)$. By definition $J_{s+1}(\varphi_0,v_0)\subset J_s(\varphi_0,v_0)$ and thus the result follows. 
\end{proof}

\section{Smoothness lemma}\label{app:smooth_lemma}
The following lemma is fundamental in the proof of smoothness of the metric and spray on the Sobolev completion $\mathcal D^s(M)$.
Let, as before, $T^{q}\mathcal D^s(M)$ denote the vector bundle above $\mathcal D^s(M)$ whose fibres are tangent vectors in the $H^q$ Sobolev class.

\begin{lemma}\label{lem:smoothrho}
Let $F\colon P^\infty(M)\times \Xcal(M)\to \Xcal(M)$ be a smooth, possibly nonlinear, differential operator of order $l-1$ in its first argument and $l$ in its second argument ($l\geq 1$).

If $s>\frac{d}{2}+l$ then $F$ extends to a smooth operator $P^{s-1}(M)\times \Xcal^s(M)\to \Xcal^{s-l}(M)$ and the mapping
\begin{equation}
	T\mathcal D^s(M)\to T^{s-l}\mathcal D^s(M);\quad (\varphi,h)\mapsto F(\det(D\varphi^{-1}),h\circ\varphi^{-1})\circ\varphi %
\end{equation}
is smooth.
\end{lemma}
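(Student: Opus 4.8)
The plan is to reduce the statement to standard facts about Sobolev spaces, the $\omega$-lemma (smoothness of right composition $R_\varphi\colon h\mapsto h\circ\varphi$), and the structure of the map $\varphi\mapsto\det(D\varphi^{-1})$. First I would recall the basic analytic ingredients: for $s>\frac d2+l$ one has the Sobolev embedding $H^{s-l}\hookrightarrow C^0$ together with the fact that $H^{s-l}(M)$ is a Banach algebra (more generally that multiplication $H^a\times H^b\to H^{\min(a,b)}$ is bounded and bilinear when both $a,b>\frac d2$), so that a differential operator of order $l$ with $H^{s-l}$ coefficients maps $H^s$ to $H^{s-l}$ continuously, and nonlinear composition with a smooth function $u\mapsto f\circ u$ is smooth on $H^{s-l}$. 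Since $F$ is a smooth differential operator of order $l-1$ in $\rho$ and $l$ in $u$, in local coordinates $F(\rho,u)$ is a smooth function of $u$, $\rho$ and their derivatives up to orders $l$ and $l-1$ respectively; with $s>\frac d2+l$ all the arguments $\partial^\alpha u$ ($|\alpha|\le l$) lie in $H^{s-l}$, all $\partial^\beta\rho$ ($|\beta|\le l-1$) lie in $H^{s-l}$ as well (using $s-1-(l-1)=s-l$), and $H^{s-l}\subset C^0$ is a Banach algebra closed under smooth superposition. Hence $F\colon P^{s-1}(M)\times\Xcal^s(M)\to\Xcal^{s-l}(M)$ is well-defined and smooth.

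Next I would handle the composition map. Write the map in the lemma as a composite
\[
	(\varphi,h)\ \longmapsto\ \bigl(\det(D\varphi^{-1}),\,h\circ\varphi^{-1}\bigr)\ \longmapsto\ F(\det(D\varphi^{-1}),h\circ\varphi^{-1})\ \longmapsto\ F(\dots)\circ\varphi .
\]
The obstacle is that $\varphi\mapsto\varphi^{-1}$ is only continuous, not $C^1$, on $\mathcal D^s(M)$, so one cannot naively differentiate the factors involving $\varphi^{-1}$. The standard resolution, following Ebin--Marsden, is to avoid inversion entirely by rewriting everything as a composite of $\omega$-lemma maps $R_\varphi$ and differential operators whose coefficients are pulled back along $\varphi$. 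Concretely, set $\rho\circ\varphi=\det(D\varphi^{-1})\circ\varphi=1/\det(D\varphi)$, which is a smooth map $\mathcal D^s(M)\to H^{s-1}(M)$ (it is a polynomial in the entries of $D\varphi\in H^{s-1}$ divided by its determinant, which is bounded away from zero and on which division is a smooth operation in the Banach algebra $H^{s-1}$, since $s-1>\frac d2$). Then the conjugated operator $R_\varphi\circ F(\rho,\cdot)\circ R_{\varphi^{-1}}$, applied to $h$, equals $F_\varphi(\rho\circ\varphi, h)$ where $F_\varphi$ is again a differential operator in $h$ of order $l$ whose coefficients are smooth (nonlinear, algebraic) functions of the derivatives of $\varphi$ up to order $l+$ (order of the top coefficient) and of the derivatives of $\rho\circ\varphi$ up to order $l-1$, i.e. built from quantities lying in $H^{s-l}$.

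Finally I would assemble smoothness. The key point is the chain-rule computation that expresses $F_\varphi$ purely in terms of $D\varphi, D^2\varphi,\dots$ and $\rho\circ\varphi$, all of which depend smoothly on $\varphi\in\mathcal D^s(M)$ with values in the appropriate $H^{s-j}$ spaces ($j\le l$), with no occurrence of $\varphi^{-1}$. One then observes that $(\varphi,h)\mapsto F_\varphi(\rho\circ\varphi,h)$ is, locally in coordinates, a finite sum of terms each of which is a product of: a smooth superposition operator applied to $H^{s-l}$-functions built from $\varphi$; and a derivative $\partial^\alpha h$ with $|\alpha|\le l$, hence lying in $H^{s-l}$. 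Since $H^{s-l}$ is a Banach algebra and composition with smooth functions and multiplication are smooth maps there, each term depends smoothly on $(\varphi,h)\in T\mathcal D^s(M)$ with values in $H^{s-l}(M,TM)$; summing over the coordinate patches via a partition of unity subordinate to charts of $M$, and using that the bundle $T^{s-l}\mathcal D^s(M)$ is glued from such local pictures, gives smoothness of the global map $T\mathcal D^s(M)\to T^{s-l}\mathcal D^s(M)$. The main obstacle, as noted, is precisely the non-smoothness of inversion, and the whole argument is organized around never differentiating $\varphi\mapsto\varphi^{-1}$; once the map is written in the conjugated form $R_\varphi\circ(\text{diff.\ op.})\circ R_{\varphi^{-1}}$ and re-expanded so that only $\varphi$ (never $\varphi^{-1}$) and its derivatives appear, everything reduces to the routine Banach-algebra and $\omega$-lemma estimates above.
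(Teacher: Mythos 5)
Your proposal is correct and follows essentially the same route as the paper's proof: express $\rho\circ\varphi=1/\det(D\varphi)$, avoid differentiating the inversion map by using $(D\varphi^{-1})\circ\varphi=(D\varphi)^{-1}$ to convert the conjugated derivatives of $\rho$ and $h$ into expressions in $D\varphi,D^2\varphi,\dots$ only, and then conclude via the Banach algebra property of $H^{s-l}$ and the $\omega$-lemma. The only difference is presentational: the paper spells out the inductive step showing $\varphi\mapsto(\partial^k\rho)\circ\varphi$ is smooth into $H^{s-k-1}$ via the product formula $(\partial_i r(\varphi))\bigl((\partial_i\varphi^{-1})\circ\varphi\bigr)$, which you subsume into the general chain-rule remark.
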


\begin{proof}
	The definition of $\rho$ is
	\begin{equation}
		\rho = \det(D\varphi^{-1}).
	\end{equation}
	Calculating the derivative of the identity $x=(\varphi^{-1}\circ\varphi)(x)$ yields
	\begin{align}
		\operatorname{1}=D(\varphi^{-1}\circ\varphi)=D(\varphi^{-1})\circ\varphi\cdot D\varphi
	\end{align}
	Composing by $\varphi^{-1}$ and taking the determinant and we get
	\begin{equation}
		\rho = \frac{1}{\det(D\varphi)\circ\varphi^{-1}}
	\end{equation}
	so
	\begin{equation}
		\rho\circ\varphi = \frac{1}{\det(D\varphi)}.
	\end{equation}
	Smoothness of the 
	mapping $r\colon\varphi\mapsto \rho\circ\varphi$ follows directly from the Banach algebra property of $H^{s-1}(M)$ and the positivity 
	of $\det(D\varphi)$.

	Denoting by $\partial_i$ partial differentiation with respect to a choice of local coordinates, we want to show that $\varphi\mapsto (\partial_i \rho)\circ\varphi$ is smooth as a mapping $\mathcal D^s(M)\to H^{s-2}(M)$.
	For this, we write the mapping as
	\begin{equation}
		\varphi \mapsto (\partial_i (r(\varphi)\circ\varphi^{-1}))\circ\varphi
	\end{equation}
	where $r\colon \mathcal{D}^s \to H^{s-1}(M)$ is smooth.
	Differentiating through we get
	\begin{equation}\label{eq:product}
		(\partial_i r(\varphi)) \Big((\partial_i \varphi^{-1})\circ\varphi\Big).
	\end{equation}
	Clearly, $\varphi\mapsto\partial_i r(\varphi)$ is smooth as a mapping $\mathcal D^s(M) \to H^{s-2}(M)$.
	That $(\partial_i \varphi^{-1})\circ\varphi$ is smooth follows since $D(\varphi^{-1})\circ\varphi = (D\varphi)^{-1}$ and since inversion of an invertible matrix is a smooth operation due to the Banach algebra property of~$H^s$.
	It also follows that the product \eqref{eq:product} is smooth.
	By iterating this process with $r\colon \varphi\to (\partial^{k-1}_{i_1,\ldots,i_{k-1}}\rho) \circ\varphi$ we get that $\varphi\mapsto (\partial^k_{i_1\cdots i_k}\rho)\circ\varphi$ is smooth as a mapping $\mathcal D^s(M)\to H^{s-k-1}(M)$.

	That $(\varphi,h)\mapsto (\partial_{i_1,\ldots,i_k} (h\circ\varphi^{-1}))\circ\varphi$ is smooth as a mapping $T\mathcal D^s(M)\to T^{s-k}\mathcal D^s(M)$ is well known (see \cite[App.~2]{EbMa1970}).

	Notice that so far we have only used the Banach algebra property of $H^{s-1}$, but not of $H^{s-2}$, $H^{s-3}$, etc.

	Finally, the differential operator $F$ can locally be written
	\begin{equation}
		F(\rho,u)(x) = f(\rho(x),\partial_{1}\rho,\ldots,\partial^{l-1}_{d\ldots d}\rho, u(x),\partial_{1}u,\ldots,\partial^l_{d\ldots d}u)
	\end{equation}
	for some finite-dimensional smooth mapping $f\colon \RR_{>0}\times\RR^{m-1}\times (TM)^m \to TM$ of all the partial derivatives.
	Thus, locally, we have
	\begin{equation}
		(\varphi,h)\mapsto f(\rho\circ\varphi,(\partial_{1}\rho)\circ\varphi,\ldots,(\partial^{l-1}_{d\ldots d}\rho)\circ\varphi, u(x),(\partial_{1}(h\circ\varphi^{-1}))\circ\varphi,\ldots,(\partial^l_{d\ldots d}(h\circ\varphi^{-1}))\circ\varphi).
	\end{equation}
	Since each term plugged into $f$ is (at least) in $H^{s-l}$, and $s-l>d/2$, it follows from the $\omega$-Lemma (see, e.g., \cite[Sec.~2]{EbMa1970}) that $F$ is smooth.
\end{proof}

\begin{remark}
	The condition $s>d/2+l$ cannot be weakened. 
	Take, for example, 
	\begin{equation}
		F(\rho,u) = \abs{\nabla^{l-1}\rho}^2 u.
	\end{equation}
	If $\rho\in P^{s-1}(M)$ then $\nabla^{l-1}\rho$ belongs to $H^{s-l}$.
	Although $\rho\mapsto \nabla^l\rho$ is smooth as a mapping $H^{s-1}\to H^{s-l}$, unless $s-l>d/2$ the product $\nabla^{l-1}\rho \mapsto \abs{\nabla^{l}\rho}^2$ is not smooth as a mapping $H^{s-l}\to H^{s-l}$.
\end{remark}

\def\cprime{$'$}

\end{document}